\definecolor{darkred}{rgb}{0.9,0.1,0.1}
\newtheorem{proposition}{Proposition}[section]
\newtheorem{theorem}{Theorem}[section]
\newtheorem{lemma}{Lemma}[section]
\newtheorem{corollary}{Corollary}[section]
\definecolor{darkred}{rgb}{0.9,0.1,0.1}
\def\eps{\varepsilon}
\def\eps{\varepsilon}
\definecolor{darkred}{rgb}{0.9,0.1,0.1}
\begin{document}

\title{Stochastic homogenization of convolution type operators}

\author{  A.~Piatnitski$^{\circ,\sharp}$,
 E.~Zhizhina$^\sharp$}
\date{}
\maketitle

\parskip 0.04 truein

\begin{center}
$^\sharp$
Institute for Information Transmission Problems RAS\\
Bolshoi Karetny per., 19,
Moscow, 127051, Russia
\end{center}

\begin{center}
$^\circ$
Arctic University of Norway, UiT,
campus  Narvik,\\
Postbox 385, 8505 Narvik, Norway\\
\end{center}

\begin{abstract}
This paper deals with the homogenization problem for convolution type non-local operators
in  random statistically homogeneous ergodic media.  Assuming that the convolution kernel
has a finite second moment and satisfies  the uniform ellipticity and certain symmetry
conditions, we prove the almost sure homogenization result and show that the limit operator is
a second order elliptic differential operator with constant deterministic coefficients.
\end{abstract}

\noindent
{\bf Keywords}: \ stochastic homogenization, non-local random operators, convolution type kernels \\

\noindent
{\bf AMS Subject Classification}: \ 35B27, 45E10, 60H25, 47B25

\section{Introduction}

The paper deals with homogenization problem for integral operators of convolution type in $\mathbb R^d$ with dispersal kernels
that have random statistically homogeneous ergodic coefficients. For such operators, under natural  integrability, moment and uniform
ellipticity conditions as well as the symmetry condition we  prove the homogenization result and study the properties of the limit operator.

The integral operators with a kernel of convolution type are of great interest both from the mathematical point of view and due to various important applications in other fields.  Among such applications are  models of population dynamics and ecological models, see \cite{OFetal}, \cite{DEE} and references therein,
 non-local diffusion problems, see  \cite{AMRT, BCF},
   continuous particle systems, see  \cite{ FKK, KPZ},  image processing algorithms, see \cite{GiOs}.  In the cited works only the case of homogeneous environments has been considered.  In this case the corresponding dispersal kernel depends only on the displacement $y-x$. However, many applications deal with non-homogeneous environments.
Such environments are described in terms of integral operator  whose dispersal kernels  depend not only on the displacement $x-y$ but also
on the starting and the ending positions $x, y$.

When studying the large-time behaviour of evolution processes in these environments it is natural to make the diffusive scaling
in the corresponding integral operators and to consider the homogenization problem for the obtained family of operators with a small
positive parameter. In what follows we call this parameter $\eps$

The case of environments with periodic characteristics has been studied in the recent work
\cite{PiZhi17}. It has been shown that under natural moment and symmetry conditions on the kernel the family of rescaled operators admits homogenization, and that for the corresponding jump Markov process the Central Limit Theorem and the Invariance Principle hold.   Interesting homogenization problems for periodic operators containing both second order elliptic operator
and nonlocal Levy type operator have been considered in \cite{Arisawa} and \cite{Sandric2016}.

In the present paper we consider the more realistic case of environments with random statistically homogeneous characteristics.
More precisely, we assume that the dispersal kernel of the studied operators has the form $\Lambda(x,y)a(x-y)$, $x,\,y\in\mathbb R^d$,
where $a(z)$ is a deterministic even function that belongs to $L^1(\mathbb R^d)\cap L^2_{\rm loc}(\mathbb R^d)$ and has finite second moments,
while $\Lambda(x,y)=\Lambda(x,y,\omega)$ is a statistically homogeneous symmetric ergodic random field that satisfies the uniform ellipticity conditions
$0<\Lambda^-\leq \Lambda(x,y)\leq \Lambda^+$.\\
 Making  a diffusive scaling  we obtain the family of operators
\begin{equation}\label{L_u_biseps}
(L^\eps u)(x) \ = \ \eps^{-d-2} \int\limits_{\mathbb R^d} a\Big(\frac{x-y}{\eps}\Big)
\Lambda\Big(\frac{x}{\eps},\frac{y}{\eps}\Big) (u(y) - u(x)) dy,
\end{equation}
 where a positive scaling factor $\eps$ is a parameter.

For the presentation simplicity we assume in this paper that $\Lambda(x,y)=\mu(x)\mu(y)$ with a statistically homogeneous ergodic
field $\mu$. However, all our results remain valid for the generic statistically homogeneous symmetric random fields $\Lambda(x,y)$
that satisfy the above ellipticity conditions.




The main  goal of this work is to investigate  
the limit behaviour of $L^\eps$ as $\eps\to 0$.
We are going to show that the family $L^\eps$ converges almost surely to a second order
elliptic operator with constant deterministic coefficient in the so-called $G$-topology, that is
for any $m>0$ the family of operators $(-L^\eps+m)^{-1}$ almost surely converges strongly in $L^2(\mathbb R^d)$
to the operator $(-L^0+m)^{-1}$ where $L^0=\Theta^{ij}\frac{\partial^2}{\partial x^i\partial x^j}$,
and $\Theta$ is a positive definite constant matrix.

There is a vast existing literature devoted to
homogenization theory of differential operators, at present it is a well-developed area, see for instance  monographs \cite{BLP} and \cite{JKO}.   The first homogenization results for divergence form differential operators with random coefficients were obtained
in pioneer works \cite{Ko78} and \cite{PaVa79}.  In these works it was shown that the generic divergence form second order elliptic
operator with random statistically homogeneous coefficients admits homogenization. Moreover, the limit operator has constant coefficients,
in the ergodic case these coefficients are deterministic.

Later on a number of important homogenization results have been obtained for various elliptic and parabolic differential equations and system of equations   in random stationary media. The reader can find many references in the book  \cite{JKO}.




Homogenization of elliptic difference schemes and discrete operators in statistically homogeneous media  has been performed in
\cite{Ko87}, \cite{Ko86}. Also, in \cite{Ko86} several limit theorems have been proved for random walks in stationary discrete
random media that possess different types of symmetry.

To our best knowledge in the existing literature there are no results on stochastic  homogenization of convolution type integral operators with a dispersal kernel  that has stationary rapidly oscillating coefficients.

In the one-dimensional case a homogenization problem for  the operators that have both local and non-local parts has been
considered in the work \cite{Rho_Var2008}. This work  deals with scaling limits of the solutions to stochastic differential
equations in dimension one with stationary coefficients driven by Poisson random measures and Brownian
motions. The annealed convergence theorem is proved, in which the limit exhibits
a diffusive or superdiffusive behavior, depending on whether  the Poisson random measure has a finite second moment
or not. It is important in this paper that the diffusion coefficient does not degenerate.


Our approach  relies on asymptotic expansion techniques and using the so-called corrector. As  often happens in the case of
random environments we cannot claim the existence of a stationary corrector. Instead, we construct a corrector which is a random field
in $\mathbb R^d$ with stationary increments and   almost surely has a sublinear growth in $L^2(\mathbb R^d)$.  \\
When substituting two leading terms of the expansion for the solution of the original equation, we obtain 
the discrepancies being oscillating functions with zero average.
Some of these functions are not stationary.
 In order to show that the contributions of these discrepancies
are asymptotically negligible we add to the expansion two extra terms. The necessity of constructing these terms is essentially
related to the fact that, in contrast with the case of elliptic differential equations, the resolvent of the studied operator is not
locally compact in $L^2(\mathbb R^d)$.

The paper is organized as follows:

 In Section \ref{s_pbmset} we provide the detailed setting of the problem
and formulate the main result of this work.

The leading terms of the ansatz for a solution of  equation $(L^\eps-m)u^\eps=f$ with  $f\in C_0^\infty(\mathbb R^d)$ are introduced in Section \ref{s_asyexp}.  Also in this section we outline the main steps of the proof of  our homogenization theorem.

Then in Section \ref{s_corr} we construct the principal corrector in the asymptotic expansion and study the properties
of this corrector.

Section \ref{s_addterms} is devoted to constructing two additional terms of the expansion of $u^\eps$. Then we introduce
the effective matrix and prove its positive definiteness.

Estimates for the remainder in the asymptotic expansion are obtained in Section \ref{s_estrem}.

Finally, in Section \ref{s_proofmain} we complete the proof of the homogenization theorem.

\section{Problem setup and  main result}\label{s_pbmset}

\noindent
We consider a homogenization problem for a random convolution type operator of the form
\begin{equation}\label{L_u}
(L_\omega u)(x) \ = \ \mu(x,\omega) \int\limits_{\mathbb R^d} a(x-y) \mu(y,\omega) (u(y) - u(x)) dy.
\end{equation}
For the function $a(z)$ we assume the following:
\begin{equation}\label{A1}
a(z) \in  L^{1}(\mathbb R^d) \cap L^{2}_{\rm loc}(\mathbb R^d), \quad  a(z) \ge 0; \quad  a(-z) = a(z),
\end{equation}
and
\begin{equation}\label{M2}
\| a \|_{L^1(\mathbb R^d)}  = \int\limits_{\mathbb R^d} a(z) \ dz = a_1 < \infty; \quad \sigma^2 =  \int\limits_{\mathbb R^d} |z|^2 a(z) \ dz < \infty.
\end{equation}
We also assume that
\begin{equation}\label{add}
\mbox{there exists a constant} \; c_0>0 \; \mbox{ and a cube } \; {\bf B} \subset  \mathbb R^d, \; \mbox{ such that } \; a(z) \ge c_0 \quad \mbox{for all } \; z \in {\bf B}.
\end{equation}
This additional condition on $a(z)$ is naturally satisfied for regular kernels, and we introduced \eqref{add} for a presentation simplicity. Assumption \eqref{add} essentially simplifies derivation of inequality \eqref{L2B}, on which the proof of the smallness of the first corrector is based, see Proposition \ref{1corrector} below. We notice that inequality \eqref{L2B} can also  be derived without assumption \eqref{add}, however in this case  additional arguments of measure theory are required.
\\[5pt]
Let $(\Omega,\mathcal{F}, \mathbb P)$ be a standard probability space.
We assume that the random field $ \mu(x,\omega)= {\bm\mu} (T_x \omega) $ is stationary  and bounded from above and from below:
\begin{equation}\label{lm}
0< \alpha_1 \le    \mu(x,\omega) \le \alpha_2 < \infty;
\end{equation}
here  ${\bm\mu} (\omega) $ is a random variable, and $T_x$, $x\in \mathbb R^d$, is an ergodic group of measurable transformations  acting in $\omega$-space $\Omega$, $T_x:\Omega
\mapsto\Omega$, and possessing the following properties:
\begin{itemize}
  \item  $T_{x+y}=T_x\circ T_y\quad\hbox{for all }x,\,y\in\mathbb R^d,\quad T_0={\rm Id}$,
  \item   $\mathbb P(A)=\mathbb P(T_xA)$ for any $A\in\mathcal{F}$ and any $x\in\mathbb R^d$,
  \item $T_x$ is a measurable map from $\mathbb R^d\times \Omega$ to $\Omega$, where $\mathbb R^d$ is equipped
  with the Borel $\sigma$-algebra.
\end{itemize}

\medskip

Let us consider a family of the following operators
\begin{equation}\label{L_eps}
(L^{\varepsilon}_\omega u)(x) \ = \ \frac{1}{\varepsilon^{d+2}} \int\limits_{\mathbb R^d} a \Big( \frac{x-y}{\varepsilon} \Big) \mu \Big( \frac{x}{\varepsilon},\omega \Big) \mu \Big( \frac{y}{\varepsilon},\omega \Big) \Big( u(y) - u(x) \Big) dy.
\end{equation}
We are interested in the limit behavior of the operators $L^{\varepsilon}_\omega$ as $\varepsilon \to 0$ .
We are going to show that for a.e. $\omega$ the operators  $L^{\varepsilon}_\omega$ converge to a differential operator with constant coefficients in the topology of the resolvent convergence. Let us fix $m>0$, any $f \in L^2(\mathbb R^d)$, and define $u^{\varepsilon}$ as the solution of equation:
\begin{equation}\label{u_eps}
(L^{\varepsilon}_\omega - m) u^{\varepsilon} \ = \ f, \quad \mbox{ i.e. } \; u^{\varepsilon} \ = \ (L^{\varepsilon}_\omega - m)^{-1} f
\end{equation}
with $f \in L^2(\mathbb R^d)$. Denote by $\hat L$ the following operator in  $L^2(\mathbb R^d)$:
\begin{equation}\label{L_hat}
\hat L u \ = \ \sum_{i,j = 1}^d \Theta_{i j} \frac{\partial^2 u}{\partial x_i  \ \partial x_j}, \quad {\cal D}(\hat L) = H^2(\mathbb R^d)
\end{equation}
with a positive definite matrix $\Theta = \{ \Theta_{i j} \}, \ i,j = 1, \ldots, d,$ defined below, see (\ref{Positive}). Let $u_0(x)$ be the solution of equation
\begin{equation}\label{u_0}
\sum_{i,j = 1}^d \Theta_{i j} \frac{\partial^2 u_0}{\partial x_i  \ \partial x_j} - m u_0 = f,  \quad \mbox{ i.e. } \; u_0 \ = \ (\hat L - m)^{-1} f
\end{equation}
with the same right-hand side $f$ as in (\ref{u_eps}).

\begin{theorem}\label{T1} Almost surely for any $f \in L^2(\mathbb R^d)$  and any $m>0$ the convergence holds:
\begin{equation}\label{t1}
\| (L^{\varepsilon}_\omega - m)^{-1} f - (\hat L - m)^{-1} f \|_{L^2(\mathbb R^d)} \ \to 0 \quad \mbox{  as } \; \varepsilon \to 0.
\end{equation}

\end{theorem}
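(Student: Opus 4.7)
The strategy is a two-scale asymptotic expansion adapted to the nonlocal stochastic setting: construct an ansatz $u^\eps_{\rm ans}$ for $u^\eps$ up to corrections that vanish in $L^2(\mathbb R^d)$ as $\eps\to 0$, apply $L^\eps_\omega - m$ to it, and show that the resulting discrepancy tends to zero almost surely. Since $L^\eps_\omega$ is self-adjoint and nonpositive on $L^2(\mathbb R^d)$ (from the symmetry and nonnegativity of the kernel together with the factorization $\Lambda(x,y)=\mu(x)\mu(y)$), the operators $(L^\eps_\omega - m)^{-1}$ are uniformly bounded by $1/m$, so by density it is enough to prove \eqref{t1} for $f\in C_0^\infty(\mathbb R^d)$; then $u_0\in C^\infty\cap H^k$ for every $k$. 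The ansatz is
\begin{equation*}
u^\eps_{\rm ans}(x) = u_0(x) + \eps\sum_{i=1}^d \kappa_i\!\Big(\tfrac{x}{\eps},\omega\Big) \frac{\partial u_0}{\partial x_i}(x) + \eps^2 \bigl(\text{two further correctors}\bigr),
\end{equation*}
with $u_0$ the solution of \eqref{u_0} for the yet-to-be-identified effective matrix $\Theta$.

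First I would derive and solve the corrector equations. Substituting $u^\eps_{\rm ans}$ into $L^\eps_\omega u^\eps_{\rm ans}$ and Taylor-expanding $u_0(y)$ around $x$ up to second order in $(y-x)$, the $\eps^{-1}$-order terms produce a probability-space equation $\mathcal A \kappa_i = g_i$, where $\mathcal A$ is the stationary realization of the nonlocal operator on $L^2(\Omega,\mathbb P)$ and $g_i\in L^2(\Omega)$ has zero mean. The operator $\mathcal A$ is bounded, self-adjoint and nonpositive, but $g_i$ need not lie in its range; consequently $\kappa_i$ is constructed as a random field on $\mathbb R^d$ with \emph{stationary increments} via regularized equations $(\delta - \mathcal A)\kappa_i^\delta = g_i$ and a limit $\delta\to 0$ argument using uniform Dirichlet-form estimates. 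I would then establish almost sure sublinear growth of $\kappa_i(\cdot,\omega)$ in $L^2_{\rm loc}(\mathbb R^d)$; this is where condition \eqref{add} enters, through the local Poincaré-type inequality it provides for the convolution semi-form (the content of the first-corrector proposition alluded to by the authors).

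Next I would identify the effective matrix $\Theta$ by averaging the $\eps^0$-order discrepancy: the ergodic theorem expresses $\Theta_{ij}$ as a quadratic form in $\kappa_i,\kappa_j$, the kernel $a$ and the field $\mu$. Positive definiteness follows from the lower bound in \eqref{lm}, the second-moment condition \eqref{M2}, and the variational characterization of $\Theta$ as an infimum over admissible stationary test fields. At this stage the residual $L^\eps_\omega u^\eps_{\rm ans} - f$ still contains oscillating mean-zero terms, and crucially some of them are \emph{not} stationary: they arise from differentiating $\kappa_i(x/\eps)\partial_i u_0(x)$ in $x$ and from the cubic Taylor remainder of $u_0$. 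To cancel these non-stationary oscillations I would introduce two further correctors at order $\eps^2$, each obtained by solving an auxiliary cell problem on $(\Omega,\mathbb P)$; this is the step specific to the nonlocal setting, made necessary — as flagged in the introduction — by the lack of local compactness of $(L^\eps_\omega - m)^{-1}$ on $L^2(\mathbb R^d)$.

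Finally I would estimate the remainder $r^\eps := u^\eps - u^\eps_{\rm ans}$, which satisfies $(L^\eps_\omega - m) r^\eps = -f - (L^\eps_\omega - m) u^\eps_{\rm ans}$. The uniform resolvent bound gives $\|r^\eps\|_{L^2} \le m^{-1}\|(L^\eps_\omega - m)r^\eps\|_{L^2}$, so it suffices to show that the right-hand side tends to $0$ in $L^2(\mathbb R^d)$ almost surely. Every discrepancy term has the structure $F(\cdot/\eps,\omega)\,G(\cdot,\eps)$ with $F$ either a stationary mean-zero field (controlled by the mean ergodic theorem) or a sublinearly growing realization of a corrector (controlled by the $L^2_{\rm loc}$ sublinearity), and $G$ built from the Schwartz-class function $u_0$ and its derivatives. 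I expect the principal difficulty to be precisely the control of the non-stationary oscillating discrepancies mentioned above: a direct ergodic-theorem argument does not apply, and one must exploit the second-moment integrability \eqref{M2} of $a$ to localize the error and bound it using the sublinear growth of $\kappa_i$. It is this step that forces the introduction of the two additional correctors and constitutes the technical heart of the argument.
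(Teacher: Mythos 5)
Your overall blueprint — dense-class reduction, two-scale ansatz, regularized corrector equation $(\delta-\mathcal A)\kappa^\delta = g$ leading to a field with stationary increments and sublinear $L^2_{\rm loc}$ growth, identification of $\Theta$ via the ergodic theorem, positive definiteness, and a uniform-resolvent remainder estimate — matches the paper's architecture closely, and the treatment of the first corrector $\theta$ is essentially the paper's. However, there are two concrete mismatches, one minor and one that would actually break the argument.

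First, the minor point: you reduce to $f\in C_0^\infty(\mathbb R^d)$, obtaining $u_0\in C^\infty\cap H^k$. But the paper's remainder estimates (Section~\ref{s_estrem}) repeatedly use that $u_0$ has \emph{compact support} — it splits $\mathbb R^d$ into $\{|x|<L\}$ and $\{|x|>L\}$ with $\mathrm{supp}\,u_0\subset\{|x|<L/2\}$ and exploits the vanishing of $u_0(x-\eps z)$ in various regions. For a generic $f\in C_0^\infty$ the function $u_0=(\hat L-m)^{-1}f$ only has exponential decay, not compact support. The paper fixes this by working with the dense set $\mathcal S_0$ of $f\in C_0^\infty$ such that $u_0\in C_0^\infty$ as well. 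Your approach could probably be patched with extra tail estimates, but as written it does not support the localization steps.

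The substantive gap concerns the two additional terms. You describe them as ``two further correctors at order $\eps^2$, each obtained by solving an auxiliary cell problem on $(\Omega,\mathbb P)$.'' That is not what the paper does, and a literal cell-problem approach is likely to fail. The paper splits the $\eps^0$-order discrepancy as $I_0^\eps = \hat Lu_0 + S(\tfrac x\eps,\omega)\nabla\nabla u_0 + f_2^\eps$ with $S$ stationary mean-zero and $f_2^\eps$ non-stationary, and then \emph{defines}
$u_2^\eps = (-L^\eps+m)^{-1}f_2^\eps$ and $u_3^\eps = (-L^\eps+m)^{-1}\big(S(\tfrac x\eps,\omega)\nabla\nabla u_0\big)$.
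These are resolvent preimages, not $\eps^2\cdot(\text{stationary corrector})$ terms, and there is no cell problem on $(\Omega,\mathbb P)$ being solved. Their smallness is established by two different mechanisms: for $u_2^\eps$, an energy identity paired with the a.s.\ sublinear growth of $\theta$ (Lemma~\ref{l_u2small}); for $u_3^\eps$, a genuinely new compactness argument (Lemma~\ref{Convergence}) — one proves that the family $\{u_3^\eps\}$ is precompact in $L^2_{\rm loc}$ via a Fourier/Poincar\'e-type bound extracted from the Dirichlet form of $L^\eps$, and then concludes from $S(\cdot/\eps,\omega)\rightharpoonup 0$ weakly that $(F^\eps,u_3^\eps)\to0$, hence $\|u_3^\eps\|_{L^2}\to0$. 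This is precisely where the non-local compactness issue you flag in your last paragraph gets resolved, and it cannot be replaced by a second-order cell problem: in the random (non-periodic) setting the second-order cell problem generically has no stationary $L^2(\Omega)$ solution (the source is mean-zero but need not lie in the range of the symmetrized operator $A$), so the object you propose to construct may not exist. Your proposal identifies the right difficulty but supplies the wrong tool for it.
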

The statement of Theorem \ref{T1} remains valid in the case of non-symmetric operators $L^\eps$ of the form
\begin{equation}\label{L_eps_ns}
(L^{\varepsilon,{\rm ns}}_\omega u)(x) \ = \ \frac{1}{\varepsilon^{d+2}} \int\limits_{\mathbb R^d} a \Big( \frac{x-y}{\varepsilon} \Big) \lambda \Big( \frac{x}{\varepsilon},\omega \Big) \mu \Big( \frac{y}{\varepsilon},\omega \Big) \Big( u(y) - u(x) \Big) dy
\end{equation}
with $\lambda(z,\omega)=\bm{\lambda}(T_z\omega)$ such that $0< \alpha_1 \le    \lambda(x,\omega) \le \alpha_2 < \infty$.
In this case the equation \eqref{u_eps} reads
\begin{equation}\label{u_eps_nssss}
(L^{\varepsilon,{\rm ns}}_\omega - m) u^{\varepsilon} \ = \ f.
\end{equation}

\begin{corollary}\label{cor_main}
 Let $\lambda(z,\omega)$  and $\mu(z,\omega)$ satisfy condition \eqref{lm}. Then  a.s. for any $f\in L^2(\mathbb R^d)$ and any $m>0$ the limit relation in \eqref{t1} holds true with $\hat L^{\rm ns} u \ = \ \sum_{i,j = 1}^d \Theta^{\rm ns}_{i j} \frac{\partial^2 u}{\partial x_i  \ \partial x_j}$, \
 $\Theta^{\rm ns}=\big(\mathbb E \big\{\frac{\bm\mu}{\bm\lambda}\big\}\big)^{-1} \Theta$, and $\Theta$ defined in
 \eqref{Positive}.
\end{corollary}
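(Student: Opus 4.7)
The strategy I propose is to reduce the non-symmetric problem to the symmetric one already treated in Theorem~\ref{T1}. The key observation is the pointwise factorization
$$(L^{\varepsilon,\rm ns}_\omega u)(x) = \frac{\lambda(x/\varepsilon,\omega)}{\mu(x/\varepsilon,\omega)}\,(L^\varepsilon_\omega u)(x),$$
which follows from \eqref{L_eps_ns} by pulling the $y$-independent factor $\lambda(x/\varepsilon)/\mu(x/\varepsilon)$ outside the integral. Since this factor is bounded between $\alpha_1/\alpha_2$ and $\alpha_2/\alpha_1$, dividing equation~\eqref{u_eps_nssss} through by it yields the equivalent equation
$$\Big(L^\varepsilon_\omega - m\,q(x/\varepsilon,\omega)\Big)u^\varepsilon = q(x/\varepsilon,\omega)\,f,\qquad q(z,\omega):=\mu(z,\omega)/\lambda(z,\omega),$$
in which the operator on the left is the \emph{symmetric} one analyzed in Theorem~\ref{T1}, now perturbed by a bounded oscillating stationary ergodic potential $m\,q$ with mean $m\,\mathbb E\{\mu/\lambda\}$.

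Next I would rerun the two-scale expansion of Sections~\ref{s_asyexp}--\ref{s_estrem} on this modified equation, using the \emph{same} principal corrector $\chi$ and the same additional correctors constructed in Sections~\ref{s_corr}--\ref{s_addterms}: the discrepancies at orders $\varepsilon^{-1}$ and $\varepsilon^{0}$ generated by $L^\varepsilon_\omega$ are cancelled exactly as before. The only new ingredient is the passage to the limit in the oscillating potential and source. By the Birkhoff ergodic theorem, $q(\cdot/\varepsilon,\omega)$ converges almost surely in the weak-$*$ topology of $L^\infty(\mathbb R^d)$ to $\mathbb E\{\mu/\lambda\}$; matching $\varepsilon^{0}$-terms then produces the effective equation
$$L^0 u_0 - m\,\mathbb E\{\mu/\lambda\}\,u_0 = \mathbb E\{\mu/\lambda\}\,f,\qquad L^0 = \sum_{i,j=1}^d \Theta_{ij}\frac{\partial^2}{\partial x_i\partial x_j},$$
which after dividing by $\mathbb E\{\mu/\lambda\}$ is precisely $(\hat L^{\rm ns}-m)u_0=f$ with $\Theta^{\rm ns}=(\mathbb E\{\mu/\lambda\})^{-1}\Theta$, as claimed.

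The main technical obstacle is the rigorous identification of the weak limit of the product $q(x/\varepsilon,\omega)\,u^\varepsilon$. The remainder estimates of Section~\ref{s_estrem} adapt without essential change: the bilinear form associated with $-L^\varepsilon_\omega + m\,q(x/\varepsilon,\omega)$ remains uniformly coercive because $q\ge\alpha_1/\alpha_2>0$, which yields strong $L^2$ convergence $u^\varepsilon\to u_0$; combined with weak-$*$ convergence of $q(\cdot/\varepsilon,\omega)$, this identifies $q(x/\varepsilon,\omega)u^\varepsilon\rightharpoonup \mathbb E\{\mu/\lambda\}\,u_0$ in $L^2(\mathbb R^d)$, and an analogous reasoning handles the right-hand side $q(x/\varepsilon,\omega)f$. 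No new corrector construction is required, which is exactly why the effective operator differs from $\hat L$ only by the scalar factor $(\mathbb E\{\mu/\lambda\})^{-1}$ produced by inverting the averaged oscillating mass.
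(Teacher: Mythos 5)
Your opening move coincides with the paper's: pulling the $y$-independent factor $\lambda(x/\eps)/\mu(x/\eps)$ out of the integral in \eqref{L_eps_ns} and dividing \eqref{u_eps_nssss} through by it gives $(L^\eps_\omega - m\,q(x/\eps))u^\eps = q(x/\eps)f$ with $q=\mu/\lambda$, which is exactly the paper's equation \eqref{eq_modfd} with the notation $\rho_\eps = q(\cdot/\eps)$. The paths then diverge. Rather than rerunning the two-scale expansion with an oscillating zeroth-order potential, the paper compares $u^\eps$ against the solution $g_\eps$ of the auxiliary problem $L^\eps g_\eps - m\langle\rho\rangle g_\eps = \langle\rho\rangle f$ with \emph{constant} coefficient $\langle\rho\rangle=\mathbb E\{\bm\mu/\bm\lambda\}$; Theorem \ref{T1} applies directly to $g_\eps$, and the difference $\alpha_\eps = g_\eps - u_\eps$ satisfies an equation whose right-hand side is $(\langle\rho\rangle-\rho_\eps)(g_\eps+f)$. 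Testing against $\alpha_\eps$ and using the coercivity estimate \eqref{C_ns_pure}, the family $\{\alpha_\eps\}$ is locally compact by the Fourier-series argument of Lemma \ref{Convergence}, so weak convergence of $\langle\rho\rangle - \rho_\eps$ to zero kills the right-hand side pairing and forces $\|\alpha_\eps\|_{L^2}\to 0$. No new corrector is built; the oscillating zeroth-order discrepancy is absorbed by a compactness--duality mechanism rather than by expansion.

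Your final paragraph contains a genuine circularity. You assert that coercivity of the form associated with $-L^\eps_\omega + m\,q(x/\eps)$ "yields strong $L^2$ convergence $u^\eps\to u_0$," and then use that convergence to identify the weak limit of the product $q(x/\eps)u^\eps$. But strong convergence of $u^\eps$ to the claimed $u_0$ is precisely what the corollary asserts; coercivity by itself gives only a uniform $L^2$ bound, and even after upgrading to local compactness (which needs the discrete-Fourier argument of Lemma \ref{Convergence}, not just coercivity), a subsequential limit still has to be identified by passing to the limit in the equation --- which is exactly the step that requires the weak limit of $q(x/\eps)u^\eps$. To break this loop you must either adopt the paper's auxiliary-problem comparison, or genuinely extend the expansion: the oscillating potential and source introduce new zero-mean terms $m(q(x/\eps)-\langle q\rangle)u_0(x)$ and $(q(x/\eps)-\langle q\rangle)f(x)$ at order $\eps^0$, and one must construct an additional corrector, analogous to $u_3^\eps$ in Lemma \ref{l_u3small}, to absorb them. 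As written, the step from coercivity to strong convergence is unjustified.
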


\section{Asymptotic expansion for $u^\eps$ }\label{s_asyexp}

We begin this section by introducing a set of functions $f \in C_0^\infty(\mathbb R^d)$ such that
$u_0 \ = \ (\hat L - m)^{-1} f\in C_0^\infty(\mathbb R^d)$.  We denote this set by $ {\cal S}_0(\mathbb R^d)$. Observe that
this set is dense in $L^2(\mathbb R^d)$. Indeed, if we take  $\varphi(x)\in C^\infty(\mathbb R)$ such that $0\leq\varphi \leq 1$,
$\varphi=1$ for $x\leq 0$ and  $\varphi=0$ for $x\geq 1$, then letting  $f_n=(\hat L-m)\big(\varphi(|x|-n)(\hat L-m)^{-1}f(x)\big)$
one can easily check that $f_n\in C_0^\infty(\mathbb R^d)$ and  $\|f_n-f\|_{L^2(\mathbb R^d)}\to0$, as $n\to\infty$.\\
We consider first the case when $f \in {\cal S}_0(\mathbb R^d)$
and denote by $Q$ a cube centered at the origin and such that  $\mathrm{supp}(u_0)\subset Q$.
We want to
prove the convergence
\begin{equation}\label{convergence1}
\| u^{\varepsilon} - u_0 \|_{L^2(\mathbb R^d)} \ \to 0, \quad \mbox{ as } \ \varepsilon \to 0,
\end{equation}
where the functions  $u^\varepsilon$ and $u_0$ are defined in  (\ref{u_eps}) and (\ref{u_0}), respectively.
To this end we approximate the function $ u^\varepsilon (x, \omega)$ by means of the following ansatz
\begin{equation}\label{v_eps}
w^{\varepsilon}(x, \omega) \ = \ v^\varepsilon (x, \omega) + u_2^\varepsilon (x, \omega) + u_3^\varepsilon(x, \omega), \quad \mbox{ with } \;  v^{\varepsilon}(x, \omega) \ = \ u_0(x)+ \varepsilon \theta \big(\frac{x}{\varepsilon}, \omega\big) \nabla u_0(x),
\end{equation}
where $\theta \big(z, \omega\big) $ is a vector function which is often called a corrector. It will be introduced later on as a solution of an auxiliary problem that does not depend on $\eps$, see \eqref{korrkappa1}.  A solution of this problem, $\theta(z,\omega)$ say, is defined up to an additive
constant vector. \\ We set
\begin{equation}\label{hi}
\chi^\varepsilon (z,\omega) = \theta (z,\omega)+ c^\varepsilon (\omega), \quad  c^\varepsilon (\omega) = - \frac{1}{|Q|} \int\limits_Q  \theta \big( \frac{x}{\varepsilon},\omega \big) dx.
\end{equation}
Observe that under such a choice of the vector $c^\eps$
the function $\chi^\varepsilon \big(\frac x\eps,\omega\big)$ has zero average in $Q$. We show in Proposition \ref{1corrector} that $\eps c^\eps\to 0$ a.s.
  It should be emphasized that $\theta (y, \omega)$ need not be a stationary field, that is we do not claim that  $\theta(y, \omega) = {\bm\theta} (T_y \omega)$ for some random vector ${\bm\theta}(\omega)$.

Two other functions,   $u_2^\varepsilon$ and $u_3^\varepsilon$, that appear in the ansatz in \eqref{v_eps}  will be introduced in \eqref{corr-u2}, \eqref{u3}, respectively.

After substitution $v_\eps$ for $u$ to (\ref{L_eps}) we get
$$
(L^{\varepsilon} v^{\varepsilon})(x) \ = \ \frac{1}{\varepsilon^{d+2}} \int\limits_{\mathbb R^d} a \big( \frac{x-y}{\varepsilon} \big) \mu \big( \frac{x}{\varepsilon} \big) \mu \big( \frac{y}{\varepsilon} \big)
\Big( u_0(y)+ \varepsilon \theta \big(\frac{y}{\varepsilon}\big) \nabla u_0(y)
- u_0(x)-\varepsilon \theta \big(\frac{x}{\varepsilon} \big) \nabla u_0(x)
\Big) dy;
$$
here and in what follows we drop the argument $\omega$ in the random fields $\mu(y,\omega)$, $\theta(y,\omega)$, etc.,
if it does not lead to ambiguity.
After change of variables $\frac{x-y}{\varepsilon}=z$ we get
\begin{equation}\label{ml_1}
(L^{\varepsilon} v^{\varepsilon})(x) \ = \ \frac{1}{\varepsilon^{2}} \int\limits_{\mathbb R^d} dz \  a (z) \mu \big( \frac{x}{\varepsilon} \big) \mu \big( \frac{x}{\varepsilon} -z \big) \Big( u_0(x-\varepsilon z) - u_0(x) + \varepsilon \theta \big(\frac{x}{\varepsilon}-z \big) \nabla u_0 (x-\varepsilon z)  -\varepsilon \theta \big( \frac{x}{\varepsilon} \big) \nabla u_0(x) \Big).
\end{equation}
 The Taylor expansion of a function $u(y)$ with a remainder in  the integral form reads
$$
\begin{array}{c}
u(y) \ = \ u(x) + \int_0^1 \nabla u (x + (y-x)t) \cdot (y-x) \ dt \\[3pt]
= \ u(x) + \nabla u(x) \cdot (y-x) + \int_0^1  \nabla \nabla u(x+(y-x)t) (y-x) (y-x) (1-t) \ dt
\end{array}
$$
and is valid for any $x, y \in \mathbb R^d$. Thus we  can rewrite (\ref{ml_1}) as follows
\begin{eqnarray}
(L^{\varepsilon} v^{\varepsilon})(x) \hskip -1.7cm &&\nonumber\\[1.6mm]
\label{K2_1}
&&\!\!\!\!\!=\, \frac{1}{\varepsilon} \mu \Big( \frac{x}{\varepsilon}, \omega \Big)\nabla u_0(x)\! \cdot\! \int\limits_{\mathbb R^d}  \Big[ -z + \theta \Big(\frac{x}{\varepsilon}-z, \omega \Big) - \theta \Big(\frac{x}{\varepsilon}, \omega \Big) \Big]  a (z) \mu \Big( \frac{x}{\varepsilon} -z, \omega \Big) \, dz
\\[1mm]
\nonumber
&&\!\!\!\!\! +\,\mu \Big(\! \frac{x}{\varepsilon}, \omega \Big) \nabla \nabla u_0 (x)\!\cdot\!  \int\limits_{\mathbb R^d}\! \Big[ \frac12 z\!\otimes\!z\! - z \!\otimes\!\theta \Big(\frac{x}{\varepsilon}\!-\!z,\omega \Big)  \Big]   a (z) \mu \Big( \frac{x}{\varepsilon}\! -\!z, \omega \Big) \, dz
 +\, \ \phi_\varepsilon (x) \hfill\\
 \nonumber
 &&=: \frac{1}{\varepsilon} I^\varepsilon_{-1} + \varepsilon^0 I^\varepsilon_0 + \phi_\varepsilon
\end{eqnarray}
with
\begin{equation}\label{14}
\begin{array}{rl} \displaystyle
\!\!\!\!&\hbox{ }\!\!\!\!\!\!\!\!\!\!\!\!\phi_\varepsilon (x, \omega) =\\[3mm]
& \!\!\!\!\!\!\!\!\displaystyle
\!\! \int\limits_{\mathbb R^d}\!  a (z) \mu \Big( \frac{x}{\varepsilon},\omega \Big) \mu \Big( \frac{x}{\varepsilon}\! -\!z,\omega \Big)  \bigg(\int\limits_0^{1}  \nabla \nabla u_0(x-\varepsilon z t) \!\cdot\! z\!\otimes\!z \,(1-t) \ dt  - \frac{1}{2} \nabla \nabla u_0(x)\!\cdot\!  z\!\otimes\!z \bigg) \, dz
\\[4mm]  &\!\!\!\!\!\!\!\!\! \displaystyle
+\, \frac{1}{\varepsilon} \mu \Big( \frac{x}{\varepsilon},\omega \Big)  \int\limits_{\mathbb R^d} \ a (z)  \mu \Big( \frac{x}{\varepsilon} -z, \omega \Big)  \theta \Big(\frac{x}{\varepsilon}\!-\!z,\omega \Big)\! \Big(\nabla u_0(x- \varepsilon z) - \nabla u_0(x) \Big)\, dz
\\[4mm]  &\!\!\!\!\!\!\!\!\! \displaystyle
+ \mu \Big( \frac{x}{\varepsilon},\omega \Big) \nabla \nabla u_0(x)  \int\limits_{\mathbb R^d} \ a (z)  \mu \Big( \frac{x}{\varepsilon} -z, \omega \Big) z \otimes \theta \Big(\frac{x}{\varepsilon}\!-\!z,\omega \Big)\, dz.
\end{array}
\end{equation}
Here and in what follows $z\otimes z$ stands for the matrix $\{z_iz_j\}_{i,j=1}^d$.

Let us outline the main steps of the proof of relation \eqref{convergence1}.
In order to make the term $I^\eps_{-1}$ in \eqref{K2_1} equal to zero, we should
construct a random  field $\theta \big(z, \omega\big)$ that satisfies the following equation
\begin{equation}\label{korr1}
\int\limits_{\mathbb R^d}  \Big( -z + \theta \big(\frac{x}{\varepsilon}-z, \omega \big) - \theta \big(\frac{x}{\varepsilon}, \omega\big) \Big) \, a (z) \mu \big( \frac{x}{\varepsilon} -z,\omega \big) \ dz \ = \ 0.
\end{equation}
The goal of the first step is to construct such a random field $\theta(z,\omega)$.
Next we show that the second term $I^\varepsilon_0$ can be represented as a sum
$$
I^\varepsilon_0 = \hat L u_0 + S\Big(\frac x\eps,\omega\Big)\nabla\nabla u_0 + f_2^\varepsilon (x,\omega),
$$
where $S(z,\omega)$ is a stationary matrix-field with zero average, and $f_2^\varepsilon (x,\omega)$ is a non-stationary term; both of them are introduced below. We define $u_2^\varepsilon$ and $u_3^\varepsilon$ by
$$
(L^\varepsilon - m) u_2^\varepsilon = - S\Big(\frac x\eps,\omega\Big)\nabla\nabla u_0, \quad  (L^\varepsilon - m) u_3^\varepsilon = - f_2^\varepsilon (x,\omega),
$$
and prove that $\| u_2^\varepsilon \|_{L^2(\mathbb R^d)} \to 0$,  $\| u_3^\varepsilon \|_{L^2(\mathbb R^d)} \to 0$. Then
considering the properties of the corrector $\theta$, see Theorem \ref{t_corrector}, we derive the limit relation
 $\|\varepsilon \theta\big(\frac x\eps\big) \nabla u_0(x) \|_{L^2(\mathbb R^d)} \to 0$,  as $\varepsilon \to 0$.
This yields  $\| w^\varepsilon - u_0 \| \to 0$.

With this choice of  $\theta$, $u_2^\varepsilon$ and  $u_3^\varepsilon$ the expression $(L^\varepsilon - m) w^\varepsilon$ can be rearranged as follows:
$$
(L^\varepsilon - m) w^\varepsilon = (L^\varepsilon - m) v^\varepsilon + (L^\varepsilon - m) (u_2^\varepsilon + u_3^\varepsilon) =
(\hat L - m) u_0 + \phi_\varepsilon - m \varepsilon \theta \nabla u_0
$$
$$
= f  + \phi_\varepsilon - m \varepsilon \theta \nabla u_0 = (L^\varepsilon - m) u^\varepsilon  + \phi_\varepsilon - m \varepsilon \theta \nabla u_0.
$$
We prove below in Lemma \ref{reminder} that  $\|\phi_\varepsilon\|\big._{L^2(\mathbb R^d)}$ is vanishing as $\varepsilon \to 0$.
This implies the convergence $\| w^\varepsilon - u^\varepsilon \|\big._{L^2(\mathbb R^d)} \to 0$ and, by the triangle inequality, the required relation in \eqref{convergence1}.


\medskip



\section{First corrector}\label{s_corr}

\medskip

 In this Section we construct a solution of equation \eqref{korr1}.
Denote
\begin{equation}\label{fkorr1}
r \big(\frac{x}{\varepsilon}, \omega\big) = \int\limits_{\mathbb R^d}  z \, a (z) \, \mu \big( \frac{x}{\varepsilon} -z,\omega \big) \ dz,
\end{equation}
 then $r(\xi, \omega) = \mathbf{r}(T_\xi \omega), \; \xi =  \frac{x}{\varepsilon},$ is a stationary field. Moreover, since $\mathbb{E} \mu ( \xi -z,\omega )= \mathbb{E}{\bm\mu}(T_{\xi-z} \omega) =  const$ for all $z$, then
$$
\mathbb{E} r (\xi, \omega) = \int\limits_{\mathbb R^d}  z \, a (z) \, \mathbb{E}\mu ( \xi -z,\omega ) \ dz \ = \ 0.
$$
Equation  \eqref{korr1} takes the form 
\begin{equation}\label{korrkappa1}
r (\xi, \omega) \ = \ \int\limits_{\mathbb R^d}  a (z) \mu ( \xi -z,\omega ) \, \big( \theta (\xi-z, \omega ) - \theta (\xi, \omega) \big)  \ dz.
\end{equation}
We are going to show now that equation \eqref{korrkappa1} has a solution that possesses the following properties: \\[1.5mm]
{\bf A}) the increments $\zeta_z(\xi, \omega)
= \theta (z+\xi, \omega ) - \theta (\xi, \omega)$ are stationary for any given $z$, i.e.
$$\zeta_z(\xi,  \omega) = \zeta_z(0, T_\xi \omega);$$
{\bf B})
$\eps \theta\big(\frac x\eps,\omega\big) $
is a function of sub-linear growth in $L_{\rm loc}^2(\mathbb R^d)$: for any bounded Lipschitz domain $Q\subset \mathbb R^d$
$$
\Big\|  \varepsilon \, \theta \big(\frac{x}{\varepsilon}, \omega \big) \Big\|_{L^2(Q)} \to 0 \quad \mbox{a.s.} \; \omega \in \Omega.
$$
Here and in the sequel for presentation simplicity we write for the $L^2$ norm of  a vector-function just $L^2(Q)$ instead of
$L^2(Q\,;\,\mathbb R^d)$.


%
%
\medskip

\begin{theorem}\label{t_corrector}
There exists a unique (up to an additive constant vector) solution $\theta\in L^2_{\rm loc}(\mathbb R^d)$ of equation \eqref{korrkappa1} that satisfies conditions {\bf A}{\rm )} -- {\bf B}{\rm )}.
\end{theorem}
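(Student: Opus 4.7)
The plan is to follow the standard stochastic-homogenization strategy: first solve a regularized variant in the stationary Hilbert space $L^2(\Omega)$, and then pass to the limit to extract the corrector, whose \emph{increments} (but not $\theta$ itself) are stationary. For $\lambda>0$ I would look for the stationary solution $\theta_\lambda(\xi,\omega)=\bm\theta_\lambda(T_\xi\omega)$ of the regularized equation
\[
\lambda\theta_\lambda(\xi,\omega)+\int_{\mathbb R^d}a(z)\mu(\xi-z,\omega)\bigl(\theta_\lambda(\xi,\omega)-\theta_\lambda(\xi-z,\omega)\bigr)\,dz=r(\xi,\omega),
\]
which in $L^2(\Omega)$ reads $(\lambda+\mathbf{A})\bm\theta_\lambda=\mathbf{r}$, where $\mathbf{A}\bm v(\omega)=\int a(z)\bm\mu(T_{-z}\omega)(\bm v(\omega)-\bm v(T_{-z}\omega))\,dz$. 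Thanks to $a(-z)=a(z)$ and the $T$-invariance of $\mathbb P$, the operator $\mathbf{A}$ is bounded, symmetric and non-negative on $L^2(\Omega)$, so $\bm\theta_\lambda$ exists and is unique.

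Testing against $\bm\theta_\lambda$ yields the Dirichlet-form identity
\[
\lambda\|\bm\theta_\lambda\|^2_{L^2(\Omega)}+\tfrac12\int a(z)\,\mathbb E\!\left[\bm\mu(T_{-z}\omega)\bigl(\bm\theta_\lambda(\omega)-\bm\theta_\lambda(T_{-z}\omega)\bigr)^2\right]dz=\mathbb E[\mathbf{r}\cdot\bm\theta_\lambda].
\]
Since $\int z\,a(z)\,dz=0$, subtracting the constant $\mathbb E[\bm\mu\bm\theta_\lambda]$ inside the $z$-integral lets us rewrite the right-hand side as $\int z\,a(z)\,\mathbb E[\bm\mu(T_{-z}\omega)(\bm\theta_\lambda(\omega)-\bm\theta_\lambda(T_{-z}\omega))]\,dz$, and Cauchy--Schwarz (using $\sigma^2<\infty$ from \eqref{M2} and $\bm\mu\leq\alpha_2$) absorbs it into the Dirichlet form on the left. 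This produces uniform-in-$\lambda$ bounds on the Dirichlet form of $\bm\theta_\lambda$ and on $\sqrt\lambda\,\|\bm\theta_\lambda\|_{L^2(\Omega)}$. Introducing the stationary increments $\bm\zeta_h^\lambda(\omega):=\bm\theta_\lambda(T_h\omega)-\bm\theta_\lambda(\omega)$, which satisfy the cocycle identity $\bm\zeta_{h_1+h_2}^\lambda=\bm\zeta_{h_1}^\lambda+\bm\zeta_{h_2}^\lambda\circ T_{h_1}$, I extract a weak $L^2(\Omega)$-limit $\bm\zeta_h$ as $\lambda\to 0$; verifying that $\lambda\bm\theta_\lambda\to 0$ strongly, one sees that $\bm\zeta_h$ is a mean-zero cocycle solving the stationary form of \eqref{korrkappa1}. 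Fixing $\theta(0,\omega)=0$ and reconstructing $\theta(\xi,\omega)$ from the cocycle along any path to $\xi$ yields a function with $\theta(\xi+h,\omega)-\theta(\xi,\omega)=\bm\zeta_h(T_\xi\omega)$, i.e.\ property \textbf{A}.

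Property \textbf{B} (sublinear growth) follows by combining the Poincar\'e-type inequality that assumption \eqref{add} provides with Birkhoff's ergodic theorem. The lower bound $a(z)\mu(\xi-z,\omega)\geq c_0\alpha_1$ for $z\in\mathbf B$ gives a discrete Poincar\'e inequality controlling the $L^2$ oscillation of $\theta$ on any box by the Dirichlet form of $\theta$ on a slightly enlarged box; Birkhoff applied to the stationary Dirichlet integrand and to the mean-zero cocycle $\bm\zeta_h$ then yields $\|\varepsilon\theta(\cdot/\varepsilon)-c^\varepsilon\|_{L^2(Q)}\to 0$ almost surely with the centering $c^\varepsilon$ from \eqref{hi}. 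Uniqueness is obtained in the same spirit: if $\psi$ is the difference of two solutions, its stationary increment $\bm\eta_h$ solves the homogeneous equation, and testing against $\psi$ on $Q/\varepsilon$ (with the boundary contributions killed by the sublinear growth) forces the Dirichlet form of $\psi$, hence each $\bm\eta_h$, to vanish.

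The main obstacle is bridging the stationary a priori estimate and the non-stationary conclusion \textbf{B}: a uniform Dirichlet-form bound on $\bm\theta_\lambda$ must be converted into sublinear growth of the non-stationary object $\theta$, and the Poincar\'e-type ingredient that makes this transition work is exactly what hypothesis \eqref{add} is tailored to supply.
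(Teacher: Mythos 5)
Your proposal follows essentially the same road map as the paper: regularize in the stationary space with a vanishing parameter, derive a Dirichlet-form a priori bound, extract a weak limit of the increments, reconstruct $\theta$ through the cocycle (Property~A), use assumption~\eqref{add} together with a discrete Poincar\'e inequality and Birkhoff to prove Property~B, and prove uniqueness by a cut-off energy argument. Two points in the sketch need correction or substantial amplification.

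First, a technical imprecision: with the kernel $a(z)\bm\mu(T_{-z}\omega)$ the operator $\mathbf A$ is \emph{not} symmetric in the unweighted space $L^2(\Omega)$; it is bounded and self-adjoint, with $\sigma(\mathbf A)\subset(-\infty,0]$, only in the weighted space $L^2_\mu(\Omega)$ with $dP_\mu=\bm\mu\,dP$. Consequently the clean Dirichlet-form identity you write after ``testing against $\bm\theta_\lambda$'' is obtained by pairing against $\bm\mu\bm\theta_\lambda$, not $\bm\theta_\lambda$; otherwise the $z\mapsto -z$ symmetrization does not close, because the two pieces carry the weights $\bm\mu(T_{-z}\omega)$ and $\bm\mu(\omega)$ respectively. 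This is exactly why the paper introduces $L^2_\mu(\Omega)$ in Proposition~\ref{spectrA} and uses $\varphi=\bm\mu\varkappa^\delta$ as the test function in Proposition~\ref{boundM}. The fix is routine (the norms are equivalent), but your statement as written is wrong.

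Second, and more importantly, there is a genuine gap in your derivation of Property~B. The discrete Poincar\'e inequality supplied by \eqref{add}, combined with Birkhoff applied to the stationary increments, gives only \emph{boundedness} of $\varphi_\eps=\eps\,\theta(\cdot/\eps)$ in $L^2(Q)$ and, after an Arzel\`a-type step, precompactness of $\{\varphi_\eps\}$ in $L^2(Q)$ (that is the content of Lemma~\ref{LemmaC}). It does \emph{not} by itself force convergence of $\varphi_\eps$ to zero, not even after subtracting $\eps c^\eps$. To conclude one has to show in addition (i) that every $L^2(Q)$-limit point of $\{\varphi_\eps\}$ is a constant vector, which in the paper is Lemma~\ref{Prop_constfun} and relies on testing against $\mathrm{div}$-type fields and Birkhoff applied to the stationary increment $\theta(\cdot-e_j)-\theta(\cdot)$; and (ii) that the constant must be zero, which in the paper is done by the scaling argument $\varphi_{M\eps_n}(z)=M\varphi_0+M\rho_{\eps_n}(z/M)\to M\varphi_0$, contradicting the a.s. uniform bound $|\eps c^\eps|\le K(\omega)$ if $\varphi_0\ne 0$. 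Your sketch omits both steps, and without them you only obtain the centered (oscillation) estimate, which is strictly weaker than Property~B as stated. Note also that one cannot simply re-normalize $\theta$ by a fixed constant to repair this, because the centering $c^\eps$ depends on $\eps$. Finally, your ``reconstruct $\theta$ from the cocycle along any path'' glosses over a consistency issue: the paper uses Mazur's theorem to upgrade the weak $L^2_M$ limit to an a.s. pointwise limit along convex combinations, precisely so that the cocycle identity can be transferred pointwise and the extension to all $z\in\mathbb R^d$ be well defined; a bare weak limit for each $h$ does not automatically produce a single coherent cocycle.
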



\begin{proof}[Proof of Theorem \ref{t_corrector}]
We divide the proof into several steps.\\
{\sl Step 1.} Consider the following operator
 acting in $L^2(\Omega)$:
\begin{equation}\label{A-omega}
(A \varphi)(\omega) = \int\limits_{\mathbb R^d} a(z) {\bm\mu}(T_z \omega) \big( \varphi (T_z \omega) - \varphi(\omega) \big) dz
\end{equation}

\begin{proposition}\label{spectrA}
The spectrum $\sigma(A) \subset (-\infty, 0]$.
\end{proposition}
\begin{proof}
It is straightforward to check that the operator $A$ is bounded and symmetric in the weighted space
$L^2(\Omega, P_\mu) = L^2_\mu(\Omega)$ with $d P_\mu(\omega) = {\bm\mu}(\omega) d P(\omega)$.
Denoting $\tilde \omega = T_z \omega, \ s=-z$, using stationarity of $\mu$ and considering the relation $a(-z) = a(z)$ we get
\begin{equation}\label{PropA1}
\begin{array}{c}
\displaystyle
\int\limits_\Omega \int\limits_{\mathbb R^d} a(z){\bm\mu}(T_z \omega){\bm\mu}(\omega) \varphi^2(T_z \omega) \, dz \, dP(\omega)=
\int\limits_\Omega \int\limits_{\mathbb R^d} a(z) {\bm\mu}(\tilde \omega) {\bm\mu}(T_{-z} \tilde\omega)  \varphi^2(\tilde\omega) \, dz \, dP(\tilde\omega) \\[3pt]
\displaystyle
= \int\limits_\Omega \int\limits_{\mathbb R^d} a(s){\bm\mu}( \omega){\bm\mu}(T_s \omega) \varphi^2(\omega)\,  ds \, dP(\omega).
\end{array}
\end{equation}
Thus
\begin{equation}\label{PropA1bis}
\begin{array}{c}
\displaystyle
\big( A\varphi, \varphi \big)_{L^2_\mu} = \int\limits_\Omega \int\limits_{\mathbb R^d} a(z) {\bm\mu}(T_z \omega) \big( \varphi(T_z \omega) - \varphi(\omega) \big) \varphi(\omega) {\bm\mu}(\omega)  dz dP(\omega)
\\ \displaystyle
= -\frac12 \int\limits_\Omega \int\limits_{\mathbb R^d} a(z) {\bm\mu}(T_z \omega) {\bm\mu}(\omega) \big( \varphi(T_z \omega) - \varphi(\omega) \big)^2  dz dP(\omega)<0.
\end{array}
\end{equation}
Since the norms in $L^2(\Omega)$ and $L^2_\mu(\Omega)$ are equivalent, the desired statement follows.
\end{proof}

Let us consider for any $\delta>0$ the equation
\begin{equation}\label{A-delta}
\delta \varphi(\omega) - \int\limits_{\mathbb R^d} a (z) {\bm\mu} ( T_z \omega ) ( \varphi (T_z \omega ) - \varphi ( \omega) )  \ dz =  r(\omega), \quad r(\omega) =  \int\limits_{\mathbb R^d}  z  a (z) {\bm\mu} (T_z \omega ) \ dz.
\end{equation}
By Proposition \ref{spectrA} the operator $(\delta I - A)^{-1}$ is bounded, then there exists a unique solution $\varkappa^\delta (\omega) = -(\delta I - A)^{-1} r (\omega)$ of \eqref{A-delta}. 
 For any given $z \in R^d$
we set
$$
 u^\delta(z,\omega) =  \varkappa^\delta(T_z \omega) - \varkappa^\delta(\omega).
$$
Then
\begin{equation}\label{u-delta}
u^\delta(z_1 + z_2,\omega) = u^\delta(z_2,\omega) + u^\delta(z_1, T_{z_2} \omega) \quad \forall \ z_1, z_2 \in \mathbb R^d.
\end{equation}
For any  $\xi \in\mathbb R^d$ as an immediate consequence of \eqref{A-delta} we have
\begin{equation}\label{A-delta-xi}
\delta \varkappa^\delta (T_\xi \omega) - \int\limits_{\mathbb R^d} a (z) {\bm\mu} ( T_{\xi+z} \omega ) ( \varkappa^\delta (T_{\xi+z} \omega ) - \varkappa^\delta ( T_\xi \omega) )  \ dz =   \int\limits_{\mathbb R^d}  z  a (z) {\bm\mu} (T_{\xi+z} \omega ) \ dz.
\end{equation}


\medskip

Next we obtain a priori estimates for $\| \varkappa^\delta (T_z \omega) - \varkappa^\delta (\omega)\|_{L^2_M}$ with $dM(z, \omega) = a(z) dz dP(\omega)$.
\begin{proposition}\label{boundM}
The following  estimate holds:
\begin{equation}\label{AB}
\| u^\delta(z,\omega) \|_{L^2_M} = \| \varkappa^\delta (T_z \omega) - \varkappa^\delta (\omega) \|_{L^2_M} \ \le \ C
\end{equation}
with a constant $C$ that does not depend on $\delta$.
\end{proposition}
\begin{proof}
Multiplying equation \eqref{A-delta} by $\varphi(\omega)={\bm\mu}(\omega)\varkappa^\delta(\omega)$ and integrating
the resulting relation over $\Omega$ yields
\begin{equation}\label{Prop2}
\begin{array}{c}
\displaystyle
\delta \int\limits_\Omega \big(\varkappa^\delta(\omega)\big)^2{\bm\mu}(\omega)\, dP(\omega)
 - \int\limits_{\mathbb R^d} \int\limits_\Omega a (z) {\bm\mu} ( T_z \omega ) \big( \varkappa^\delta (T_z \omega ) - \varkappa^\delta ( \omega) \big) \varkappa^\delta(\omega){\bm\mu}(\omega) \,  dz \, dP(\omega) \\ \displaystyle
 =   \int\limits_{\mathbb R^d} \int\limits_\Omega z a(z) \varkappa^\delta(\omega) {\bm\mu}(T_z \omega) {\bm\mu}(\omega)  \, dz \, dP(\omega).
\end{array}
\end{equation}
The same change of variables as in \eqref{PropA1} results in the relation
\begin{equation}\label{Prop2_eq}
\int\limits_{\mathbb R^d} \int\limits_\Omega z a(z) \varkappa^\delta (\omega) {\bm\mu}(T_z \omega) {\bm\mu}(\omega)  \, dz \, dP(\omega)= -
\int\limits_{\mathbb R^d} \int\limits_\Omega z a(z)  \varkappa^\delta (T_z \omega)  {\bm\mu}(\omega) {\bm\mu}(T_z \omega)\, dz \, dP(\omega),
\end{equation}
therefore,  the right-hand side of \eqref{Prop2} takes the form
\begin{equation}\label{RHS}
 \!\int\limits_{\mathbb R^d}\! \int\limits_\Omega z a(z) \varkappa^\delta(\omega) {\bm\mu}(T_z \omega) {\bm\mu}(\omega)   dz  dP(\omega)= -\frac12
\int\limits_{\mathbb R^d}\! \int\limits_\Omega z a(z) \big(  \varkappa^\delta(T_z \omega) - \varkappa^\delta(\omega)   \big) {\bm\mu}(T_z \omega) {\bm\mu}(\omega)  dz dP(\omega).
\end{equation}
Equality \eqref{PropA1bis} implies that the second term on the left-hand side of \eqref{Prop2} can be rearranged in the following way
\begin{equation}\label{LHS2}
\begin{array}{c}
\displaystyle
- \int\limits_{\mathbb R^d} \int\limits_\Omega a (z) {\bm\mu} ( T_z \omega ) \big( \varkappa^\delta (T_z \omega ) - \varkappa^\delta ( \omega) \big) \varkappa^\delta(\omega){\bm\mu}(\omega) \, dz \, dP(\omega)
\\ \displaystyle
= \frac12 \int\limits_{\mathbb R^d}  \int\limits_\Omega a(z) {\bm\mu}(T_z \omega) {\bm\mu}(\omega) \big( \varkappa^\delta( T_z \omega) - \varkappa^\delta (\omega)  \big)^2  dz \, dP(\omega).
\end{array}
\end{equation}
Let us denote
$$
J^\delta = \int\limits_{\mathbb R^d}  \int\limits_\Omega {\bm\mu}(T_z \omega) {\bm\mu}(\omega) \big( \varkappa^\delta( T_z \omega) - \varkappa^\delta (\omega)  \big)^2 a(z) dz \, dP(\omega) = \int\limits_{\mathbb R^d}  \int\limits_\Omega {\bm\mu}(T_z \omega) {\bm\mu}(\omega) (u^\delta (z,\omega))^2  dM(z,\omega)
$$
and
$$
\int\limits_{\mathbb R^d}  \int\limits_\Omega \big( \varkappa^\delta( T_z \omega) - \varkappa^\delta (\omega)  \big)^2 a(z) dz \, dP(\omega) = \int\limits_{\mathbb R^d}  \int\limits_\Omega (u^\delta (z,\omega))^2  dM(z,\omega) = \| u^\delta \|^2_{L^2_M},
$$
where $dM(z, \omega) = a(z) dz dP(\omega)$.
Then
\begin{equation}\label{B1}
J^\delta = \int\limits_{\mathbb R^d}  \int\limits_\Omega {\bm\mu}(T_z \omega) {\bm\mu}(\omega) (u^\delta (z,\omega))^2  dM(z,\omega) \ge \alpha_1^2 \| u^\delta \|^2_{L^2_M}
\end{equation}
and on the other hand, relations \eqref{Prop2} - \eqref{LHS2} imply the following upper bound on $J^\delta$:
\begin{equation}\label{B2}
J^\delta = \int\limits_{\mathbb R^d}  \int\limits_\Omega {\bm\mu}(T_z \omega) {\bm\mu}(\omega) (u^\delta (z,\omega))^2  dM(z,\omega) \le \frac12 \alpha_2^2 \sigma \| u^\delta \|_{L^2_M}.
\end{equation}
Bounds \eqref{B1} - \eqref{B2} together yield
$$
\alpha_1^2 \| u^\delta \|^2_{L^2_M} \le J^\delta \le \frac12 \alpha_2^2 \sigma \| u^\delta \|_{L^2_M}.
$$
Consequently we obtain the estimate \eqref{AB} with $C = \frac{\alpha_2^2}{2 \alpha_1^2} \sigma$, and this estimate is uniform in $\delta$.
\end{proof}


\begin{corollary}
For any $\delta>0$ the following upper bound holds:
\begin{equation}\label{u-norm}
\sqrt{\delta} \, \| \varkappa^\delta \|_{L^2_\mu} \le C.
\end{equation}
\end{corollary}

\begin{proof}
From \eqref{Prop2} we have
\begin{equation}\label{Prop2-norm}
\begin{array}{c}
\displaystyle
\delta \int\limits_\Omega \big(\varkappa^\delta(\omega)\big)^2{\bm\mu}(\omega)\, dP(\omega)
  =\int\limits_{\mathbb R^d} \int\limits_\Omega a (z) {\bm\mu} ( T_z \omega ) \big( \varkappa^\delta (T_z \omega ) - \varkappa^\delta ( \omega) \big) \varkappa^\delta(\omega){\bm\mu}(\omega) \,  dz \, dP(\omega) \\ \displaystyle
    +\int\limits_{\mathbb R^d} \int\limits_\Omega z a(z) \varkappa^\delta(\omega) {\bm\mu}(T_z \omega) {\bm\mu}(\omega)  \, dz \, dP(\omega).
\end{array}
\end{equation}
Then using \eqref{RHS}, \eqref{LHS2}, \eqref{B2} together with the Cauchy-Swartz inequality and bound \eqref{AB}, we obtain that the expression on the right-hand side of \eqref{Prop2-norm} is uniformly bounded in $\delta$.
\end{proof}

Proposition \ref{boundM} implies that the family $\{ u^\delta(z, \omega) \}_{\delta>0}$ is bounded in $L^2_M$. Consequently there exists a subsequence $u_j (z, \omega) = u^{\delta_j} (z, \omega)$,  $j=1,2, \ldots,$ that converges in a weak topology of $L^2_M$ as $\delta_j \to 0$. We denote this limit by $\theta(z,\omega)$:
\begin{equation}\label{theta}
w\,\mbox{-}\!\!\lim_{j \to \infty} u_j (z,\omega) = w\,\mbox{-}\!\!\lim_{\delta_j \to 0} \big(  \varkappa^{\delta_j}(T_z \omega) - \varkappa^{\delta_j}(\omega) \big) =  \theta(z,\omega),
\end{equation}
Clearly,  $\theta(z,\omega) \in L^2_M$, i.e.
\begin{equation}\label{thetaLM}
\int\limits_{\mathbb R^d}  \int\limits_\Omega  \theta^2 (z,\omega) a(z) dz dP(\omega) < \infty,
\end{equation}
and  by the Fubini theorem $\theta (z, \omega) \in L^2 (\Omega)$ for almost all $z$ from the support of the function $a(z)$. In addition $\theta(0,\omega) \equiv 0$ and for any $z$
\begin{equation}\label{Etheta}
\mathbb{E} \theta(z,\omega) = \lim_{\delta_j \to 0} \Big( \mathbb{E} \varkappa^{\delta_j} (T_z \omega) - \mathbb{E}  \varkappa^{\delta_j}(\omega) \Big)  =  0.
\end{equation}


\medskip\noindent
{\sl Step 2.} {\sl Property A}.  The function $\theta(z,\omega)$ introduced in  \eqref{theta} is not originally defined on the set
$\{z\in\mathbb R^d\,:\,a(z)=0\}$.
\begin{proposition}\label{statincrements}
The function $\theta(z, \omega)$, given by \eqref{theta}, can be extended to  $\mathbb R^d\times\Omega$ in such a way that $\theta(z, \omega)$ satisfies relation \eqref{u-delta}, i.e. $\theta(z, \omega)$ has stationary increments:
\begin{equation}\label{thetaVIP}
 \theta(z+\xi,\omega) - \theta (\xi,\omega) = \theta(z, T_\xi \omega)  = \theta(z, T_\xi \omega) - \theta(0, T_\xi \omega).
\end{equation}
\end{proposition}

\begin{proof}
Applying Mazur's theorem \cite[Section V.1]{Yo65} we conclude that $\theta(z, \omega) = s\,\hbox{-}\!\lim\limits_{n \to \infty} w_n$ is the strong limit of a sequence $w_n$ of convex combinations of elements $u_j(z,\omega) = u^{\delta_j} (z,\omega)$.
The strong convergence implies that there exists a subsequence of $\{w_n \}$ that converges a.s. to the same limit   $\theta(z, \omega)$:
$$
\lim\limits_{n_k \to \infty} w_{n_k} (z, \omega) = \theta(z, \omega) \quad \mbox{for a.e. } \; z \; \mbox{ and a.e.} \; \omega.
$$
Since equality  \eqref{u-delta} holds for all $u_j$,  it also holds for any convex linear combination $w_n$ of $u_j$:
\begin{equation}\label{wn}
w_n (z_1 + z_2,\omega) = w_n(z_2,\omega) + w_n (z_1, T_{z_2} \omega) \quad \forall \ n.
\end{equation}
Thus taking the subsequence $\{w_{n_k} \}$ in equality \eqref{wn}  and
passing to the point-wise limit $n_k \to \infty$ in any term of this equality
we obtain \eqref{thetaVIP} first only for such $z_1, z_2$ that $z_1, z_2, z_1+ z_2$ belong to $\mathrm{supp}(a)$.
Then we extend function $\theta(z, \omega)$ to a.e. $z \in \mathbb R^d$ using relation \eqref{thetaVIP}:
\begin{equation}\label{lim_sh_inv}
\theta(z_1 + z_2, \omega) = \theta(z_2, \omega) + \theta(z_1, T_{z_2} \omega).
\end{equation}
Observe that this extension is well-defined because relation \eqref{thetaVIP} holds on the support of $a$.\\[1.5mm]
Let us show that $\theta(z,\omega)$ is defined for all $z\in\mathbb Z^d$. To this end we observe that, due to the properties
of the dynamical system $T_z$, the function $\theta(z_1,T_{z_2}\omega)$ is well-defined measurable function
of $z_1$ and $\omega$ for all $z_2\in\mathbb R^d$. The function $\theta(z_1+z_2,\omega)$ possesses the same property
due to its particular structure. Then according to \eqref{lim_sh_inv} the function $\theta(z_2, \omega)$ is defined
for all $z\in\mathbb Z^d$.
\end{proof}
Denote  $\zeta_z (\xi, \omega)= \theta(z+\xi,\omega) - \theta (\xi,\omega) $,
then for $z\in\mathbb R^d$ relation \eqref{thetaVIP} yeilds
\begin{equation}\label{thetaVIPbis}
\zeta_z (\xi, \omega) = \zeta_z(0, T_\xi \omega) ,
\end{equation}
i.e. for all $z\in\mathbb R^d$ the field $\zeta_z(\xi,\omega)$ is statistically homogeneous  in $\xi$, and
\begin{equation}\label{zetatheta}
 \zeta_z(0, \omega) = \theta(z, \omega).
\end{equation}
Thus by \eqref{theta}, \eqref{thetaVIP} -- \eqref{thetaVIPbis} the random function $\theta(z,\omega)$ is not stationary, but its increments $\zeta_z(\xi, \omega) = \theta (z+\xi, \omega ) - \theta (\xi, \omega)$ form a stationary field for any given $z$.

\bigskip\noindent
{\sl Step 3.} At this step we show that $\theta$ satisfies equation \eqref{korrkappa1}.\\
Let us prove now that $\theta(z,\omega)$ defined by \eqref{theta} is a solution of equation \eqref{korr1} (or \eqref{korrkappa1}).
To this end for an arbitrary function $\psi(\omega) \in L^2(\Omega)$ we multiply equality \eqref{A-delta-xi} by a function
$\psi(\omega){\bm\mu}(\omega)$ and integrate the resulting relation over $\Omega$, then
we have
\begin{equation}\label{Solution}
\begin{array}{c}
\displaystyle
\delta \int\limits_\Omega \varkappa^\delta(T_\xi \omega) \psi(\omega) {\bm\mu}(\omega)\, dP(\omega) \!=\!
  \int\limits_{\mathbb R^d} \int\limits_\Omega a (z) {\bm\mu} ( T_{\xi+z} \omega ) \big( \varkappa^\delta (T_{\xi+z} \omega ) - \varkappa^\delta (T_\xi \omega) \big) dz  \psi(\omega) {\bm\mu}(\omega) dP(\omega) \\ \displaystyle
    +\int\limits_{\mathbb R^d} \int\limits_\Omega z a(z) {\bm\mu}(T_{\xi+z} \omega) dz \, \psi(\omega) {\bm\mu}(\omega)  \, dP(\omega).
\end{array}
\end{equation}
By estimate \eqref{u-norm} and the Cauchy-Swartz inequality for any $\psi \in L^2(\Omega)$  we get
\begin{equation}\label{ud-norm}
\delta \int\limits_\Omega \varkappa^\delta(T_\xi \omega) \psi(\omega) {\bm\mu}(\omega)\, dP(\omega) \to 0 \quad \mbox{as } \\ \delta \to 0.
\end{equation}
Passing to the limit $\delta \to 0$ in equation \eqref{Solution}
and taking into account \eqref{theta} and \eqref{ud-norm}, we obtain that for a.e. $\omega$ the function $\theta(z,T_\xi \omega)$ satisfies the equation
\begin{equation*}\label{A-delta-xibis}
\int\limits_{\mathbb R^d} a (z) {\bm\mu} ( T_{\xi+z} \omega ) \theta(z, T_\xi \omega) )  \ dz = -  \int\limits_{\mathbb R^d}  z  a (z) {\bm\mu} (T_{\xi+z} \omega ) \ dz.
\end{equation*}
Using \eqref{thetaVIP} we get after the change of variables $z \to -z$
\begin{equation}\label{theta-xi-z}
-\int\limits_{\mathbb R^d} a (z) {\bm\mu} ( T_{\xi-z} \omega ) ( \theta (\xi-z, \omega ) - \theta ( \xi, \omega) )  \ dz +  \int\limits_{\mathbb R^d}  z  a (z) {\bm\mu} (T_{\xi-z} \omega ) \ dz =0,
\end{equation}
and it is the same as \eqref{korr1}. Thus we have proved that $\theta(z,\omega)$ is a solution of \eqref{korrkappa1}.

\medskip

\noindent
{\sl Step 4}. Property B.

Assumption \eqref{add} and inequality \eqref{thetaLM} imply that
$$
c_0  \int\limits_{{\bf B}}  \int\limits_\Omega  \theta^2 (z,\omega) dz dP(\omega) <  \int\limits_{\mathbb R^d}  \int\limits_\Omega  \theta^2 (z,\omega) a(z) dz dP(\omega) < \infty,
$$
and by the Fubini theorem  we conclude that a.s.
\begin{equation}\label{L2B}
  \int\limits_{{\bf B}}  \theta^2 (z,\omega) dz < \infty.
\end{equation}
 Thus $\theta(z,\omega) \in L^2({\bf B})$ with $\| \theta (z, \omega) \|_{L^2({\bf B})} = K(\omega)$ for a.e. $\omega$, and
  ${\mathbb E} (K(\omega))^2< \infty$.


\begin{proposition} [Sublinear growing of $\eps\theta(\frac x\eps) $ in $L_{\rm loc}^2(\mathbb R^d)$] \label{1corrector}
Denote by $\varphi_\eps (z, \omega) = \eps\, \theta \big(\frac z\eps, \omega\big)$.
Then  a.s.
\begin{equation}\label{1corrsmall}
\| \varphi_\eps (\cdot, \omega) \|_{L^2(\mathcal{Q})} \ \to \ 0 \quad \mbox{ as } \; \eps \to 0
\end{equation}
for any bounded Lipschitz domain $\mathcal{Q}\subset\mathbb R^d$.
\end{proposition}

\begin{proof}
We use in the proof inequality \eqref{L2B}
and assume in what follows without loss of the generality that ${\bf B}=[0,1]^d$.

\begin{lemma}\label{LemmaC}
The family of functions  $\varphi_\eps (z, \omega) = \eps\, \theta \big(\frac z\eps, \omega\big)$ is bounded and compact in $L^2(Q)$.
\end{lemma}
\begin{proof}
Using change of variables $\frac z\eps = y$ we have
$$
\|\varphi_\eps \|^2_{L^2(Q)} = \| \eps \,  \theta \big(\frac z\eps, \omega\big) \|^2_{L^2(Q)} = \int\limits_Q \eps^2 \,  \theta^2 \big(\frac z\eps, \omega\big) dz =
\int\limits_{\eps^{-1} Q} \eps^{d+2} \, \theta^2 (y, \omega) dy
$$
$$
= \eps^{d+2} \sum\limits_{j \in \mathbb{Z}_{ Q/\eps}} \ \int\limits_{B_j}  \, \theta^2 (y, \omega) dy = \eps^{d+2} \sum\limits_{j \in \mathbb{Z}_{Q/\eps}} \ \int\limits_{B_j}  \, (\theta (y, \omega) - \theta(j,\omega) +  \theta(j,\omega))^2  dy
$$
\begin{equation}\label{L-1}
\le {2}\eps^{d+2} \sum\limits_{j \in \mathbb{Z}_{ Q/\eps}} \ \int\limits_{B_j}  (\theta (y, \omega) -
\theta(j,\omega))^2 dy \ + \   {2}\eps^{d+2} \sum\limits_{j \in \mathbb{Z}_{ Q/\eps}} \theta^2 (j,\omega) \, |B_j|.
\end{equation}
Here $j \in \mathbb{Z}^d \cap \frac1\eps Q = \mathbb{Z}_{ Q/\eps}$, $B_j=j+[0,1)^d$.
Then if $y \in B_j$, then $y = j+z, \; z \in {\bf B} = [0,1)^d$, and we can rewrite the first term on the right-hand side of  \eqref{L-1} as follows
$$
{2}\,\eps^{d+2} \sum\limits_{j \in \mathbb{Z}_{ Q/\eps}} \ \int\limits_{{\bf B}}  (\theta (j + z, \omega) - \theta(j,\omega))^2 dz =
{2}\,\eps^{d+2} \sum\limits_{j \in \mathbb{Z}_{Q/\eps}} \ \int\limits_{{\bf B}}  \theta^2 (z, T_j \omega)  dz.
$$
Using the fact that $ \theta_B(j,\omega):=\int\limits_{{\bf B}}  \theta^2 (z, T_j \omega)  dz$ is a stationary field and  $\theta(z,\omega) \in L^2({\bf B})$, by the Birkhoff
ergodic  theorem we obtain that
$$
{2}\,\eps^{d} \sum\limits_{j \in \mathbb{Z}_{Q/\eps}} \ \int\limits_{{\bf B}}  \theta^2 (z, T_j \omega)  dz \ \to \  2 |Q| \ \mathbb{E}  \int\limits_{{\bf B}}  \theta^2 (z, \omega)  dz<\infty.
$$
Consequently, the first term in \eqref{L-1} is vanishing as $\eps \to 0$:
\begin{equation}\label{L-2}
{2}\eps^{d+2} \sum\limits_{j \in \mathbb{Z}_{Q/\eps}} \ \int\limits_{{\bf B}}  \theta^2 (z, T_j \omega)  dz \ \to \ 0.
\end{equation}
Let us prove now that a.s. the second term in \eqref{L-1} is bounded. Denoting
$$
\widehat \varphi_\eps (z) =\eps \, \widehat  \theta \big(\frac z\eps, \omega\big),
$$
where $\widehat \theta$ is a piecewise constant function: $\widehat \theta \big(\frac z\eps,\omega\big) =
 \theta \big([\frac z\eps],\omega\big)  = \theta (j,\omega)$ as $z \in  \eps B_j$, the second term in \eqref{L-1} equals to
\begin{equation}\label{L-3}
{2}\,\eps^{d+2} \sum\limits_{j \in \mathbb{Z}_{Q/\eps}} \theta^2 (j,\omega) = 2 \, \|  \eps \, \widehat  \theta \big(\frac z\eps, \omega\big) \|^2_{L^2(Q)}
=2\|\widehat \varphi_\eps(z)\|^2_{L^2(Q)}.
\end{equation}
Let us estimate the difference gradient of $ \widehat \varphi_\eps$:
$$
\| {\rm grad} \, \widehat \varphi_\eps\|^2_{(L^2(Q))^d} = \eps^2 \int\limits_Q \sum_{k=1}^d \frac{\big(
\theta\big([\frac1\eps(z+\eps e_k)], \omega\big) - \theta\big([\frac z\eps],\omega\big)  \big)^2}{\eps^2} \, dz
$$
$$
=  \int\limits_Q \sum_{k=1}^d   \big(\theta\big(\big[\frac z\eps\big] + e_k, \omega\big) -  \theta\big(\big[\frac z\eps\big],\omega\big)   \big)^2 \, dz =  \eps^d  \sum_{k=1}^d   \sum\limits_{j \in \mathbb{Z}_{Q/\eps}}  \big(\theta(j+ e_k, \omega) - \theta(j,\omega) \big)^2.
$$
But $\theta(j+ e_k, \omega) - \theta(j,\omega) = \theta(e_k, T_j \omega)$ is stationary for any given $e_k$, thus
\begin{equation}\label{L-4}
\| {\rm grad} \, \widehat \varphi_\eps\|^2_{(L^2(Q))^d} = \eps^d \sum_{k=1}^d  \sum\limits_{j \in \mathbb{Z}_{Q/\eps}}    \big(\theta(j+ e_k, \omega) - \theta(j,\omega) \big)^2 \ \to \ |Q| \sum_{k=1}^d C_k,
\end{equation}
where $C_k = \mathbb{E} \theta^2 (e_k, \omega)$.

Next we prove that a.s. the following estimate holds:
\begin{equation}\label{L-5}
\bar \theta_\eps (\omega) = \int\limits_Q \widehat \varphi_\eps (z, \omega) dz =
\eps^d  \sum\limits_{j \in \mathbb{Z}_{ Q/\eps}} \eps \, \theta(j,\omega) \le \widetilde C(\omega).
\end{equation}
We apply the induction and start with $d=1$. Using stationarity of $\theta(j+1,\omega) - \theta(j,\omega)$ we have by the ergodic theorem
$$
\eps^2 \, \Big|  \sum\limits_{j \in \mathbb{Z}_{Q/\eps}} \theta(j,\omega) \Big| \le \eps^2 \,
\sum\limits_{j \in \mathbb{Z}_{Q/\eps}} \sum_{k=0}^{j-1} |\theta(k+1,\omega) - \theta(k,\omega) |
$$
$$
\le \eps^2 \, \sum\limits_{j \in \mathbb{Z}_{Q/\eps}} \sum\limits_{k \in \mathbb{Z}_{Q/\eps}} |\theta(k+1,\omega) - \theta(k,\omega) | = \eps^2\frac{|Q|}\eps \sum\limits_{k \in \mathbb{Z}_{Q/\eps}} |\theta(e_1, T_k\omega) | \ \to \ |Q|^2 \mathbb{E} |\theta (e_1, \omega)| = \bar C_1.
$$
Thus
$$
\overline{\lim\limits_{\eps \to 0}}\ \eps^2 \, \Big|  \sum\limits_{j \in \mathbb{Z}_{Q/\eps}} \theta(j,\omega) \Big| \le \bar C_1,
$$
and this implies that for a.e. $\omega$
\begin{equation}\label{L-5A}
\sup_\eps \Big| \eps^2 \, \sum\limits_{j \in \mathbb{Z}_{Q/\eps}} \theta(j,\omega) \Big| \le \widetilde C_1(\omega),
\end{equation}
where the constant $\widetilde C_1 (\omega)$ depends only on $\omega$.

Let us show how to derive  the required upper bound in the dimension $d=2$ using \eqref{L-5A}. In this case $j~\in~\mathbb{Z}_{Q/\eps}, \ j=(j_1, j_2)$, and we assume without loss of generality  that $Q \subset [-q, q]^2$. Then
$$
\theta ((j_1, j_2), \omega) = \sum_{k=0}^{j_2 -1} \big( \theta ((j_1, k+1), \omega) - \theta ((j_1, k), \omega) \big) \ + \ \theta ((j_1, 0), \omega),
$$
and for any  $j=(j_1, j_2) \in \mathbb{Z}_{Q/\eps}$ we get
$$
| \theta ((j_1, j_2), \omega)| \le \sum_{k= - q/\eps}^{q/\eps} \big| \theta ((j_1, k+1), \omega) - \theta ((j_1, k), \omega) \big| \ + \ |\theta ((j_1, 0), \omega)|.
$$
Using \eqref{L-5A} and the ergodic property of the field $| \theta (e_2, T_j\omega)|$ we obtain the following upper bound
$$
\eps^3 \, \Big| \sum\limits_{(j_1, j_2) \in \mathbb{Z}_{Q/\eps}} \theta ((j_1, j_2), \omega) \Big| \le \eps^3  \sum_{j_1= -  q/\eps}^{q/\eps}  \frac{2q}\eps \sum_{k= - q/\eps}^{q/\eps} | \theta (e_2, T_{(j_1, k)} \omega)| \ + \ \eps^3
\sum_{j_1=- q/\eps}^{q/\eps} \frac{2q}\eps |\theta ((j_1, 0), \omega)|
$$
$$
=  2q\eps^2   \sum\limits_{(j_1, k) \in \mathbb{Z}_{Q/\eps}} | \theta (e_2, T_{(j_1, k)} \omega)| + 2q\eps^2
\sum_{j_1=- q/\eps}^{q/\eps}  |\theta ((j_1, 0), \omega)| \le  \widetilde C_2(\omega) + 2q \widetilde C_1(\omega),
$$
where $2q$ is the 1-d volume of slices of $Q$ that are orthogonal to $e_1$.
The case of $d>2$ is considered in the same way.

\medskip

Applying the standard discrete Poincar\'e inequality or the Poincar\'e inequality for piece-wise linear approximations of discrete
functions we obtain from \eqref{L-4} - \eqref{L-5} that a.s.
\begin{equation}\label{L-6}
 \|  \widehat \varphi_\eps  \|^2_{L^2(Q)} \le g_1    \Big(\int\limits_Q \widehat \varphi_\eps (z, \omega) dz \Big)^2 + g_2
 \| {\rm grad} \, \widehat \varphi_\eps\|^2_{(L^2(Q))^d} \le K(\omega),
\end{equation}
where  the constants $g_1, \; g_2$, and   $K(\omega)$ do not depend on $n$.

Thus using the same piece-wise linear approximations and considering the compactness of embedding of $H^1(Q)$ to $L^2(Q)$ we derive from \eqref{L-4} and \eqref{L-6} that
 the set of functions $\{ \widehat \varphi_\eps \}$ is compact in $L^2(Q)$. As follows from  \eqref{L-1} -- \eqref{L-2}
$$
\varphi_\eps = \widehat \varphi_\eps + \breve{ \varphi}_\eps, \quad \mbox{where } \; \breve{ \varphi}_\eps(x) =
\eps \big(\theta\big(\frac x\eps\big) - \widehat \theta\big(\frac x\eps\big)\big), \quad \| \breve{ \varphi_\eps} \|_{L^2(Q)} \to 0 \; (\eps \to 0).
$$
This together with compactness of $\{ \widehat \varphi_\eps \}$ implies the compactness of the family $\{ \varphi_\eps \}$. Lemma is proved.
\end{proof}

Next we show that any limit point of the  family $\{\varphi_\eps\}$ as $\eps\to0$ is a constant function.

\begin{lemma}\label{Prop_constfun}
Let  $\{ \varphi_\eps \}$ converge for a subsequence to $\varphi_0$ in  $L^2(Q)$. Then
$\varphi_0=const$.
\end{lemma}

\begin{proof}
According to \cite{LadSol} the set $\{\mathrm{div}\phi\,:\,\phi\in (C_0^\infty(Q))^d\}$ is dense in  the subspace
of functions from $L^2(Q)$ with zero average. It suffice to show that
\begin{equation}\label{ortog_con}
\int\limits_Q \mathrm{div}\phi(x) \varphi_\eps(x)\,dx\longrightarrow 0, \ \ \hbox{as }\eps\to0,
\end{equation}
for any $\phi=(\phi^1,\,\phi^2,\ldots,\phi^d)\in (C_0^\infty(Q))^d$. Clearly,
$$
\frac 1\eps(\phi^j(x+\eps e_j)-\phi^j(x))=\partial_{x_j}\phi^j(x)+\eps\upsilon_\eps,
$$
where $\|\upsilon_\eps\|_{L^\infty(Q)}\leq C$. Then, for sufficiently small $\eps$, we have
$$
\int\limits_Q \mathrm{div}\phi(x) \varphi_\eps(x)\,dx=\int\limits_Q (\phi^j(x+\eps e_j)-\phi^j(x))
 \theta\big(\frac x\eps,\omega\big)\,dx\,+\,o(1)
$$
$$
=\int\limits_Q \phi^j(x)\big(\theta\big(\frac x\eps-e_j,\omega\big)-\theta\big(\frac x\eps,\omega\big)\big)\,dx\,+\,o(1),
$$
where $o(1)$ tends to zero as $\eps\to0$ by Lemma \ref{LemmaC}. Since $\theta(z-e_j,\omega)-(\theta(z,\omega)$ is a stationary functions,
by the Birkhoff ergodic theorem the integral on the right-hand side converges to zero a.s. as $\eps\to 0$, and the desired statement follows.
\end{proof}

Our next goal is to show that
almost surely the limit relation in  \eqref{1corrsmall} holds.
By Lemma \ref{LemmaC} the constants $\eps c^\eps$ with $c^\eps$ defined in \eqref{hi} are a.s. uniformly in $\eps$ bounded, that is
\begin{equation}\label{co_bou}
|\eps c^\eps|\leq K(\omega)
\end{equation}
for all sufficiently small $\eps>0$.\\
Consider a convergent subsequence $\{\varphi_{\eps_n}\}_{n=1}^\infty$.
By Lemma \ref{Prop_constfun} the limit function is a constant,
denote this constant by $\varphi_0$. Assume that $\varphi_0\not=0$. Then
$$
\varphi_{\eps_n}(z)=\varphi_0+\rho_{\eps_n}(z),
$$
where $\|\rho_{\eps_n}\|_{L^2({Q})}\to0$ as $\eps_n\to0$.  Clearly, we have
$$
\varphi_{2\eps_n}(z)=2\eps_n\theta\Big(\frac z{2\eps_n}\Big)=2\eps_n\theta\Big(\frac{z/2}{\eps_n}\Big)
=2\varphi_0+2\rho_{\eps_n}\Big(\frac{z}{2}\Big)\to 2\varphi_0,
$$
because $\|\rho_{\eps_n}(\cdot/2)\|_{L^2({Q})}\to 0$ as $\eps_n\to0$.  Similarly, for any $M\in \mathbb Z^+$
we have
$$
\varphi\big._{M\eps_n}(z)\,\to\, M\varphi_0 \qquad \hbox{in }L^2({Q}).
$$
Choosing $M$ in such a way that $M|\varphi_0|> K(\omega)$ we arrive at a contradiction with \eqref{co_bou}.
Therefore, $\varphi_0=0$ for any convergent subsequence.
This yields the desired convergence
in \eqref{1corrsmall} and completes the proof of Proposition \ref{1corrector}.
%
\end{proof}
\noindent
{\sl Step 5}. Uniqueness of $\theta$.


\begin{proposition}
 [Uniqueness]\label{uniqueness}
Problem \eqref{korrkappa1} has a unique up to an additive constant  solution  $\theta(z,\omega)$, $\theta \in L^2_M$,
with statistically homogeneous increments
such that \eqref{1corrsmall} holds true.
\end{proposition}

\begin{proof}
Consider two arbitrary solutions $\theta_1(z,\omega)$ and $\theta_2(z,\omega)$ of problem \eqref{korrkappa1}.
Then the difference $\Delta (z,\omega)=\theta_1(z,\omega)-\theta_2(z,\omega)$ satisfies the equation
\begin{equation}\label{1A}
\int\limits_{\mathbb R^d} a (z) \mu ( \xi+ z, \omega ) \big(\Delta (\xi+z,\omega ) - \Delta(\xi, \omega) \big)    \ dz =0
\end{equation}
for a.e. $\omega$ and for all $\xi \in \mathbb R^d$.

Let us remark that the function $\Delta (z,\omega)$ inherits properties {\bf A)} and {\bf B)} of $\theta_1(z,\omega)$ and
$\theta_2(z,\omega)$.
Consider  a cut-off function $ \varphi (\frac{|\xi|}{R})$ parameterized by $R>0$, where  $\varphi(r)$, $r\in\mathbb R$, is a  function
defined by
$$
\varphi(r) = \left\{
\begin{array}{c}
1, \quad r \le 1, \\ 2 - r, \quad 1<r<2, \\  0, \quad r \ge 2.
\end{array}
\right.
$$
For any $R>0$, multiplying equation \eqref{1A} by $\mu(\xi, \omega) \Delta (\xi, \omega ) \varphi (\frac{|\xi|}{R})$ and  integrating
the resulting relation in $\xi$ over $ \mathbb R^d$, we obtain the following equality
\begin{equation}\label{1B}
\int\limits_{\mathbb R^d} \int\limits_{\mathbb R^d} a (z) \mu ( \xi+ z, \omega )  \mu (\xi, \omega ) \big(\Delta (\xi+z,\omega ) - \Delta(\xi, \omega) \big) \Delta(\xi, \omega)   \varphi (\frac{|\xi|}{R}) \, dz \, d \xi  =0.
\end{equation}
Using the relation $a(-z)=a(z)$, after  change of variables $z \to -z, \ \xi - z = \xi'$, we get
\begin{equation}\label{2B}
\int\limits_{\mathbb R^d} \int\limits_{\mathbb R^d} a (z) \mu ( \xi'+ z, \omega )  \mu (\xi', \omega ) \big(\Delta (\xi',\omega ) - \Delta(\xi'+z, \omega) \big) \Delta(\xi'+z, \omega)   \varphi (\frac{|\xi'+z|}{R}) \, dz \, d \xi'  =0.
\end{equation}
Renaming $\xi'$ back to $\xi$ in the last equation and taking the sum of \eqref{1B} and \eqref{2B} we obtain
$$
\int\limits_{\mathbb R^d} \int\limits_{\mathbb R^d} a (z) \mu ( \xi+ z, \omega )  \mu (\xi, \omega ) \big(\Delta (\xi+z,\omega ) - \Delta(\xi, \omega) \big) \Big( \Delta(\xi+z, \omega)   \varphi (\frac{|\xi+z|}{R}) -  \Delta(\xi, \omega)   \varphi (\frac{|\xi|}{R}) \Big)  dz \, d \xi
$$
$$
= \int\limits_{\mathbb R^d} \int\limits_{\mathbb R^d} a (z) \mu ( \xi+ z, \omega )  \mu (\xi, \omega ) \Big(\Delta (\xi+z,\omega ) - \Delta(\xi, \omega) \Big)^2  \varphi (\frac{|\xi|}{R}) \, dz \, d \xi
$$
$$
+ \int\limits_{\mathbb R^d} \int\limits_{\mathbb R^d} a (z) \mu ( \xi+ z, \omega )  \mu (\xi, \omega ) \big(\Delta (\xi+z,\omega ) - \Delta(\xi, \omega) \big) \Delta(\xi+z, \omega)  \big( \varphi (\frac{|\xi+z|}{R}) -   \varphi (\frac{|\xi|}{R}) \big)  dz \, d \xi
$$
\begin{equation}\label{2C}
= J_1^R \ + \ J_2^R = 0.
\end{equation}
Letting $R=\eps^{-1}$, we first estimate the contribution of $J_2^R $.
\begin{lemma}\label{J2} The following limit relation holds a.s.:
\begin{equation}\label{3A}
\frac{1}{R^d} |J_2^R| \ \to \ 0 \quad \mbox{ as } \; R \to \infty.
\end{equation}
\end{lemma}

\begin{proof}
Denote $\Delta_z (T_\xi  \omega ) =  \Delta (\xi+z,\omega ) - \Delta(\xi, \omega)$, then  $\Delta_z (T_\xi  \omega ) $ is stationary in $\xi$ for any given $z$.

We consider separately the integration over $|\xi| > 3R$ and $|\xi| \le 3R$ in the integral $J_2^R$:
$$
J_2^R =  \int\limits_{\mathbb R^d} \int\limits_{|\xi|>3R} a (z) \mu ( \xi+ z, \omega )  \mu (\xi, \omega ) \Delta_z(T_\xi \omega) \Delta(\xi+z, \omega)  \big( \varphi (\frac{|\xi+z|}{R}) -   \varphi (\frac{|\xi|}{R}) \big)  dz \, d \xi
$$
$$
+ \int\limits_{\mathbb R^d} \int\limits_{|\xi|\le 3R} a (z) \mu ( \xi+ z, \omega )  \mu (\xi, \omega ) \Delta_z(T_\xi \omega) \Delta(\xi+z, \omega)  \big( \varphi (\frac{|\xi+z|}{R}) -   \varphi (\frac{|\xi|}{R}) \big)  dz \, d \xi.
$$
If $|\xi| > 3R$, then $\varphi (\frac{|\xi|}{R}) = 0$. Also, $\varphi (\frac{|\xi+z|}{R})=0$ if $|\xi| > 3R$ and $|z|>R$.
Then we obtain the following upper bound
$$
\frac{1}{R^d} \int\limits_{\mathbb R^d} \int\limits_{|\xi|> 3R} a (z) \mu ( \xi+ z, \omega )  \mu (\xi, \omega ) |\Delta_z (T_\xi \omega) | |\Delta(\xi+z, \omega)|   \varphi (\frac{|\xi+z|}{R})  d\xi \, dz
$$
\begin{equation}\label{estimm}
\le \frac{\alpha_2^2  }{R^d} \int\limits_{|\eta|\le 2R}  \Big( \int\limits_{|z|>R}  |z| a (z) |\Delta_z (T_{\eta-z} \omega) |\, dz \Big) \frac1R |\Delta(\eta, \omega)|   \varphi (\frac{|\eta|}{R})  d\eta
\end{equation}
$$
\le \frac{\alpha_2^2  }{R^d} \int\limits_{|\eta|\le 2R} \phi (T_\eta \omega) \frac1R |\Delta(\eta, \omega)|   \varphi (\frac{|\eta|}{R}) \,  d\eta,
$$
where $\eta=\xi+z$,
$$
\phi (T_\eta \omega) = \int\limits_{\mathbb R^d}  |z| a (z) |\Delta_z (T_{\eta -z} \omega)| \, dz,
$$
and in the first inequality we have used the fact that  $1< \frac{|z|}{R}$ if $|z|>R$.
Since  $\Delta_z(\omega) \in L^2_M$, then $\phi(\omega) \in  L^2(\Omega)$.
Applying the Cauchy-Swartz inequality to the last integral in \eqref{estimm}  and recalling the relation $R=\eps^{-1}$ we have
\begin{equation}\label{5B}
\frac{\alpha_2^2  }{R^d} \int\limits_{|\eta|\le 2R} \phi (T_\eta \omega) \frac{|\Delta(\eta, \omega)|}{R}   \varphi (\frac{|\eta|}{R}) \,  d \eta \le \alpha_2^2 \Big( \frac{1}{R^d} \int\limits_{|\eta|\le 2R} \phi^2 (T_\eta \omega) d\eta \Big)^{\frac12} \Big( \frac{1}{R^d} \int\limits_{|\eta|\le 2R} \big(\frac{|\Delta(\eta, \omega)|}{R} \big)^2 d \eta\Big)^{\frac12} \to 0,
\end{equation}
as $R \to \infty$, because the first integral on the right hand side is bounded due to the stationarity of $\phi (T_\eta \omega)$, and the second integral tends to 0 due to sublinear growth of $\Delta(\eta, \omega)$, see \eqref{1corrsmall}.

If $|\xi| \le 3R$, then the corresponding part of $R^{-d} J_2^R$ can be rewritten as a sum of two terms
$$
\frac{1}{R^d}
 \int\limits_{\mathbb R^d} \int\limits_{|\xi| \le 3R } a (z) \mu ( \xi+ z, \omega )  \mu (\xi, \omega ) \Delta_z(T_\xi \omega) (\Delta(\xi+z, \omega) - \Delta(\xi, \omega)) \big( \varphi (\frac{|\xi+z|}{R}) -   \varphi (\frac{|\xi|}{R}) \big)  d\xi \, dz
$$
$$
+ \frac{1}{R^d}\int\limits_{\mathbb R^d} \int\limits_{| \xi | \le 3R} a (z) \mu ( \xi+ z, \omega )  \mu (\xi, \omega ) \Delta_z(T_\xi \omega) \Delta(\xi, \omega)  \big( \varphi (\frac{|\xi+z|}{R}) -   \varphi (\frac{|\xi|}{R}) \big)   d\xi \, dz = I_1 + I_2.
$$
We estimate $|I_1|$ and $|I_2|$ separately. Using the inequality $|\varphi( \frac{|x|}{R}) -   \varphi (\frac{|y|}{R}) | \le \frac{|x-y|}{R}$ by
the same arguments as above we get
$$
|I_2| \le \frac{\alpha_2^2}{R^d}  \int\limits_{\mathbb R^d} \int\limits_{|\xi| \le 3R} a (z) |\Delta_z(T_\xi \omega)| |\Delta(\xi, \omega)| \frac{|z|}{R}  d\xi \, d z
$$
$$
\le  \alpha_2^2 \Big( \frac{1}{R^d} \int\limits_{|\xi|\le 3R} \phi^2 (T_\xi \omega) d\xi \Big)^{\frac12} \Big( \frac{1}{R^d} \int\limits_{|\xi|\le 3R} \big(\frac{|\Delta(\xi, \omega)|}{R} \big)^2 d \xi\Big)^{\frac12} \to 0.
$$
To estimate $I_1$ we  divide the area of integration in $z$ into two parts: $|z|< \sqrt{R}$ and $|z| \ge \sqrt{R}$, and first consider the integral
$$
I_1^{(<)} = \frac{1}{R^d} \int\limits_{|z| < \sqrt{R}} \int\limits_{|\xi| \le 3R } a (z) \mu ( \xi+ z, \omega )  \mu (\xi, \omega ) \Delta_z^2(T_\xi \omega) \big( \varphi (\frac{|\xi+z|}{R}) -   \varphi (\frac{|\xi|}{R}) \big)  d\xi \, dz
$$
Since $|z|\leq\sqrt{R}$, we have $|\varphi( \frac{|\xi +z|}{R}) -   \varphi (\frac{|\xi|}{R}) | \le \frac{1}{\sqrt{R}}$. Therefore,
$$
|I_1^{(<)}| \le \alpha_2^2 \frac{1}{\sqrt{R}} \ \frac{1}{R^d} \int\limits_{|\xi| \le 3R }  \int\limits_{\mathbb{R}^d} a (z)  \Delta_z^2(T_\xi \omega)  dz \, d \xi \to 0,
$$
as $R \to \infty$;  here we have used the fact that
$$
 \frac{1}{R^d} \int\limits_{|\xi| \le 3R }  \int\limits_{\mathbb{R}^d} a (z)  \Delta_z^2(T_\xi \omega)  dz \, d \xi  \to c_0 \mathbb{E} \Big(  \int\limits_{\mathbb{R}^d} a (z)  \Delta_z^2(\omega)  dz  \Big)
$$
with a constant $c_0$ equal to the volume of a ball of radius $3$ in $\mathbb R^d$. We turn to the second integral
$$
I_1^{(>)} = \frac{1}{R^d} \int\limits_{|z| \ge \sqrt{R}} \int\limits_{|\xi| \le 3R } a (z) \mu ( \xi+ z, \omega )  \mu (\xi, \omega ) \Delta_z^2(T_\xi \omega) \big( \varphi (\frac{|\xi+z|}{R}) -   \varphi (\frac{|\xi|}{R}) \big)  d\xi \, dz.
$$
Considering the inequality  $|\varphi( \frac{|\xi +z|}{R}) -   \varphi (\frac{|\xi|}{R}) | \le 1$ we obtain
\begin{equation}\label{7A}
|I_1^{(>)}| \le \alpha_2^2 \frac{1}{R^d} \int\limits_{|\xi| \le 3R }  \int\limits_{|z| \ge \sqrt{R}} a (z)  \Delta_z^2(T_\xi \omega) \, dz \, d \xi.
\end{equation}
Denote by $\psi_{R}(\omega)$ the stationary function defined by
$$
\psi_{R}(\omega) = \int\limits_{|z| \ge \sqrt{R}} a (z)  \Delta_z^2( \omega) \, dz.
$$
Since $ \Delta_z( \omega) \in L^2_M$, then
\begin{equation}\label{5A}
\mathbb{E} \psi_{R}(\omega) \to 0 \quad \mbox{ as  } \;  R \to \infty.
\end{equation}
Moreover, function $\psi_{R}(\omega)$ is a.s. decreasing  in $R$.
Using the ergodic theorem, \eqref{7A} and \eqref{5A}, we conclude that  $ |I_1^{(>)}| $ tends to zero  as $R \to \infty$.
Thus we have proved that $|I_1| +|I_2| \to 0 $ as $R \to \infty$ a.s.  
 Together with \eqref{5B} 
 this implies \eqref{3A}.
\end{proof}
We proceed with the term  $J_1^R$ in \eqref{2C}:
$$
J_1^R = \int\limits_{\mathbb R^d} \int\limits_{\mathbb R^d} a (z) \mu ( \xi+ z, \omega )  \mu (\xi, \omega ) \Delta_z^2 (\xi,\omega )  \varphi (\frac{|\xi|}{R}) \, dz \, d \xi.
$$
Using the ergodic theorem we get as $R \to \infty$
\begin{equation}\label{6A}
\frac{1}{R^d} J_1^R =
\frac{1}{R^d} \int\limits_{\mathbb R^d} \int\limits_{\mathbb R^d} a (z) \mu ( \xi+ z, \omega )  \mu (\xi, \omega ) \Delta_z^2 (\xi,\omega )  \varphi (\frac{|\xi|}{R}) \, dz \, d \xi \to c_1 \mathbb{E}  \int\limits_{\mathbb R^d} a (z) {\bm\mu} ( T_z \omega )  {\bm\mu} (\omega ) \Delta_z^2 (\omega )dz,
\end{equation}
where $c_1=\int_{\mathbb R^d}\varphi(|\xi|)d\xi>0$.
Consequently from \eqref{2C} - \eqref{3A} it follows that
\begin{equation}\label{6B}
\frac{1}{R^d} |J_1^R| \ \to \ 0 \quad \mbox{ as } \; R \to \infty,
\end{equation}
and together with \eqref{6A} this implies that
\begin{equation}\label{6C}
\mathbb{E}  \int\limits_{\mathbb R^d} a (z) {\bm\mu}( T_z \omega )  {\bm\mu} (\omega ) \Delta_z^2 (\omega )dz = 0.
\end{equation}
Using condition \eqref{add}  we conclude from \eqref{6C} that $\Delta_z (\omega) 
\equiv 0$ for a.e. $z$ and a.e. $\omega$, and hence $\theta_1(z,\omega)=\theta_2(z,\omega)$.
Proposition is proved.
\end{proof}

${ }$\\[-0.8cm]
This completes the proof of Theorem \ref{t_corrector}.\end{proof}

\section{Additional terms of the asymptotic expansion}\label{s_addterms}

Recall that $I_0^\eps$ stands for the sum of all terms of order $\varepsilon^{0}$ in (\ref{K2_1}) and that $u_0\in C_0^\infty(\mathbb R^d)$.
 Our first goal is to determine the coefficients of the effective elliptic operator $\hat L$.
To this end we consider the following scalar product of  $I_0^\eps$ with a function $\varphi \in L^2(\mathbb R^d)$:
\begin{equation}\label{hatK2_1}
(I^\varepsilon_0, \varphi) =
 \int\limits_{\mathbb R^d}  \int\limits_{\mathbb R^d} \Big( \frac12 z\otimes z - z \otimes \theta
 \big(\frac{x}{\varepsilon}-z, \omega \big) \Big) \  a (z) \mu \big( \frac{x}{\varepsilon}, \omega \big) \mu \big( \frac{x}{\varepsilon} -z, \omega \big)  \ dz \  \nabla \nabla u_0 (x) \varphi(x) dx.
\end{equation}
After change of variables $x = \varepsilon \eta$ we have
\begin{equation}\label{hatK2_2}
\begin{array}{l}
\displaystyle
(I^\varepsilon_0, \varphi) =
\varepsilon^d \int\limits_{\mathbb R^d}  \int\limits_{\mathbb R^d} \frac12 a (z) \,z\otimes z  \, \mu ( \eta, \omega ) \mu ( \eta -z, \omega ) \,  dz \, \nabla \nabla u_0 (\varepsilon\eta) \, \varphi (\varepsilon \eta)  \,  d\eta \\ \displaystyle
- \varepsilon^d \int\limits_{\mathbb R^d}  \int\limits_{\mathbb R^d} a (z) \,z \otimes  \theta (\eta-z, \omega )  \mu ( \eta, \omega ) \mu ( \eta -z, \omega ) \, dz \,  \nabla \nabla u_0 (\varepsilon\eta) \, \varphi (\varepsilon \eta)  \,  d\eta = I^\eps_1(\varphi) - I^\eps_2(\varphi).
\end{array}
\end{equation}
We consider the integrals $I^\eps_1(\varphi)$ and $I^\eps_2(\varphi)$ separately.
Since $\int_{\mathbb R^d}|z|^2a(z)ds\leq\infty$, then
$$
\int\limits_{\mathbb R^d} z\otimes z \,a(z)  \mu (0,\omega)\mu(-z,\omega)\,dz \in (L^\infty(\Omega))^{d^2}.
$$
Therefore,  by the Birkhoff ergodic theorem a.s.
$$
\int\limits_{\mathbb R^d} z\otimes z\,a(z)  \mu (\frac{x}{\eps},\omega)\mu(\frac{x}{\eps}-z,\omega)\,dz \rightharpoonup
D_1\quad\hbox{weakly in } \ (L^2_{\rm loc}(\mathbb R^d))^{d^2}
$$
with
\begin{equation}\label{J_1}
D_1 =  \int\limits_{\mathbb R^d} \frac12 \, z\otimes z \, a (z) \,  E\{ \mu ( 0, \omega ) \mu ( -z, \omega )\} \,  dz.
\end{equation}
Recalling that $u_0\in C_0^\infty(\mathbb R^d)$, we obtain
\begin{equation}\label{I_1}
I^\eps_1(\varphi)\to  \int\limits_{\mathbb R^d}D_1\nabla\nabla u_0(x)\varphi(x)\,dx.
\end{equation}

The second integral in \eqref{hatK2_2} contains the non-stationary random field $ \theta (z,\omega)$, and we rewrite $I_2(\varphi)$ as a sum of two terms, such that the first term contains the stationary field $\zeta_z (\eta, \omega)$ and the contribution of the second one is asymptotically negligible. In order to estimate the contribution of the second term we construct an additional corrector $u_2^\varepsilon$, see formula \eqref{corr-u2} below.\\
We have
\begin{equation}\label{I_2appr}
\begin{array}{l}
\displaystyle
I^\varepsilon_2 (\varphi) =  \int\limits_{\mathbb R^d}\! \int\limits_{\mathbb R^d}   a (z) z  \,  \mu (\frac{x}{\varepsilon}, \omega ) \mu (\frac{x}{\varepsilon} -z, \omega )   \theta (\frac{x}{\varepsilon} - z, \omega ) \nabla \nabla u_0(x) \varphi(x) \, d x \, dz \\ \displaystyle
= \frac12 \int\limits_{\mathbb R^d}\! \int\limits_{\mathbb R^d}   a (z) z  \,  \mu (\frac{x}{\varepsilon}, \omega ) \mu (\frac{x}{\varepsilon} -z, \omega )   \theta (\frac{x}{\varepsilon} - z, \omega ) \nabla \nabla u_0(x) \varphi(x) \, d x \, dz \\ \displaystyle
- \, \frac12 \int\limits_{\mathbb R^d}\! \int\limits_{\mathbb R^d}   a (z) z \,   \mu (\frac{y}{\varepsilon}, \omega ) \mu (\frac{y}{\varepsilon} -z, \omega )   \theta (\frac{x}{\varepsilon} - z, \omega ) \nabla \nabla u_0(y - \varepsilon z) \varphi(y-\varepsilon z) \, d y \, dz \\ \displaystyle
= \frac12 \int\limits_{\mathbb R^d}\! \int\limits_{\mathbb R^d} \!  a (z) z \,  \mu (\frac{x}{\varepsilon}, \omega ) \mu (\frac{x}{\varepsilon} -z, \omega ) \Big(  \theta (\frac{x}{\varepsilon} - z, \omega )  \nabla \nabla u_0(x) \varphi(x) -   \theta (\frac{x}{\varepsilon}, \omega )  \nabla \nabla u_0(x - \varepsilon z) \varphi (x-\varepsilon z)\! \Big) d x  dz \\ \displaystyle
=  \frac12 \int\limits_{\mathbb R^d}\! \int\limits_{\mathbb R^d}   a (z) z  \, \mu (\frac{x}{\varepsilon}, \omega ) \mu (\frac{x}{\varepsilon} -z, \omega ) \big(  \theta (\frac{x}{\varepsilon} - z, \omega ) -  \theta (\frac{x}{\varepsilon}, \omega ) \big)  \nabla \nabla u_0(x) \varphi(x)  d x \, dz  \\ \displaystyle
+  \frac12 \int\limits_{\mathbb R^d}\! \int\limits_{\mathbb R^d}   a (z) z  \, \mu (\frac{x}{\varepsilon}, \omega ) \mu (\frac{x}{\varepsilon} -z, \omega ) \,  \theta (\frac{x}{\varepsilon}, \omega ) \big( \nabla \nabla u_0(x) \varphi(x) -   \nabla \nabla u_0(x - \varepsilon z) \varphi (x-\varepsilon z) \big) d x \, dz,
\end{array}
\end{equation}
here and in what follows $z\theta(z)\nabla\nabla u_0(x)$ stands for $z^i\theta^j(z)\partial_{x_i}\partial_{x_j}u_0(x)$.
The field $\zeta_{-z} (\eta, \omega)=  \theta(\eta -z,\omega) -  \theta (\eta,\omega)$ is stationary for any given $z$, and
\begin{equation}\label{PL1}
\int\limits_{\mathbb R^d}  a (z) z \otimes   \zeta_{-z} (0, \omega) \mu ( 0, \omega ) \mu (  -z, \omega ) \, dz \in (L^2(\Omega))^{d^2}.
\end{equation}
Indeed, in view of \eqref{thetaLM} and \eqref{zetatheta} by the Cauchy-Schwarz inequality we have
$$
\int\limits_{\Omega}\bigg(\int\limits_{\mathbb R^d} |a (z) z \otimes   \zeta_{-z} (0, \omega) \mu ( 0, \omega ) \mu (  -z, \omega )| \, dz\bigg)^2 d P(\omega) \le
$$
$$
\alpha_2^2 \Big(\int\limits_{\mathbb R^d}  a (z) |z|^2 dz \Big) \Big( \int\limits_{\mathbb R^d} \int\limits_{\Omega} a (z) \, |\theta(-z, \omega)|^2 dz d P(\omega) \Big) < \infty.
$$
Consequently applying the ergodic theorem to the stationary field \eqref{PL1} we obtain for the first integral in \eqref{I_2appr} as $\varepsilon \to 0$
\begin{equation}\label{I2-stationary}
\begin{array}{l}
\displaystyle
\frac12 \int\limits_{\mathbb R^d} \int\limits_{\mathbb R^d} \,  a (z) z   \zeta_{-z} (\frac{x}{\varepsilon}, \omega ) \mu (\frac{x}{\varepsilon}, \omega ) \mu (\frac{x}{\varepsilon} -z, \omega )   \nabla \nabla u_0(x) \varphi(x)  d x \, dz  \ \to
\\ \displaystyle
\frac12 \int\limits_{\mathbb R^d} \int\limits_{\mathbb R^d}  a (z) z   E\{  \zeta_{-z} (0, \omega) \mu ( 0, \omega ) \mu (  -z, \omega ) \}  \nabla \nabla u_0(x) \varphi(x)  d x \, dz =  \int\limits_{\mathbb R^d} D_2 \, \nabla \nabla u_0 (x) \varphi(x) \, dx,
\end{array}
\end{equation}
where we have used the notation
\begin{equation}\label{D_2}
D_2 =  \frac12 \, \int\limits_{\mathbb R^d} a (z) z  \otimes E\{ \zeta_{-z} (0, \omega) \mu ( 0, \omega ) \mu ( -z, \omega )\} \,  dz.
\end{equation}
Denote the last integral on the right-hand side  in \eqref{I_2appr} by $J_2^\varepsilon (\varphi)$:
\begin{equation}\label{J2eps}
J_2^\varepsilon (\varphi) = \frac12 \int\limits_{\mathbb R^d} \int\limits_{\mathbb R^d} \,  a (z) z  \, \mu (\frac{x}{\varepsilon}, \omega ) \mu (\frac{x}{\varepsilon} -z, \omega ) \,  \theta (\frac{x}{\varepsilon}, \omega ) \big( \nabla \nabla u_0(x) \varphi(x) -   \nabla \nabla u_0(x - \varepsilon z) \varphi (x-\varepsilon z) \big) d x \, dz
\end{equation}
and consider this expression as a functional on $L^2(\mathbb R^d)$ acting on function $\varphi$.
In order to show that for each $\eps>0$ the functional $J_2^\varepsilon$
is a bounded linear functional on $L^2(\mathbb R^d)$ we represent $J_2^\varepsilon$ as a sum $J_2^\varepsilon=J_2^{1,\varepsilon}
+J_2^{2,\varepsilon}+J_2^{3,\varepsilon}$ with  $J_2^{1,\varepsilon}$,
$J_2^{2,\varepsilon}$ and $J_2^{3,\varepsilon}$ introduced below and  estimate each of  these functionals separately. By Proposition \ref{1corrector} a.s.
$ \theta (\frac{x}{\eps},\omega)\in L^2_{\rm loc}(\mathbb R^d)$ for all $\varepsilon>0$.  Therefore,
$$
J_2^{1,\varepsilon} (\varphi) = \frac12 \int\limits_{\mathbb R^d} \int\limits_{\mathbb R^d} \,  a (z) z  \, \mu (\frac{x}{\varepsilon}, \omega ) \mu (\frac{x}{\varepsilon} -z, \omega ) \,  \theta (\frac{x}{\varepsilon}, \omega )  \nabla \nabla u_0(x) \varphi(x)  d x \, dz
$$
is a.s. a bounded linear functional on $L^2(\mathbb R^d)$. Similarly,
$$
J_2^{2,\varepsilon} (\varphi) = \frac12 \int\limits_{\mathbb R^d} \int\limits_{\mathbb R^d} \,  a (z) z  \, \mu (\frac{x}{\varepsilon}, \omega ) \mu (\frac{x}{\varepsilon} -z, \omega ) \,  \theta (\frac{x}{\varepsilon}-z, \omega )  \nabla \nabla u_0(x-\eps z)
\varphi(x-\eps z)  d x \, dz
$$
is a.s. a bounded linear functional on $L^2(\mathbb R^d)$. Due to \eqref{thetaLM} and by the Birkhoff ergodic theorem the linear functional
$$
J_2^{3,\varepsilon} (\varphi) = \frac12 \int\limits_{\mathbb R^d} \int\limits_{\mathbb R^d} \,  a (z) z  \, \mu (\frac{x}{\varepsilon}, \omega ) \mu (\frac{x}{\varepsilon} -z, \omega ) \, \Big( \theta (\frac{x}{\varepsilon}, \omega )-
 \theta (\frac{x}{\varepsilon}-z, \omega )\Big)  \nabla \nabla u_0(x-\eps z)
\varphi(x-\eps z)  d x \, dz
$$
$$
= \frac12 \int\limits_{\mathbb R^d} \int\limits_{\mathbb R^d} \,  a (z) z  \, \mu (\frac{x}{\varepsilon}+z, \omega ) \mu (\frac{x}{\varepsilon} , \omega ) \, \Big( \theta (\frac{x}{\varepsilon}+z, \omega )-
 \theta (\frac{x}{\varepsilon}, \omega )\Big)  \nabla \nabla u_0(x)
\varphi(x)  d x \, dz
$$
$$
= \frac12 \int\limits_{\mathbb R^d} \int\limits_{\mathbb R^d} \,  a (z) z  \, \mu (\frac{x}{\varepsilon}+z, \omega ) \mu (\frac{x}{\varepsilon} , \omega ) \,  \theta
 (z,T_{\frac{x}{\varepsilon}} \omega )  \nabla \nabla u_0(x)
\varphi(x)  d x \, dz
$$
is a.s. bounded in $L^2(\mathbb R^d)$. Since $J_2^{\varepsilon} (\varphi) =J_2^{1,\varepsilon} (\varphi)+ J_2^{2,\varepsilon} (\varphi)+ J_2^{3,\varepsilon} (\varphi)$, the desired boundedness of  $J_2^{\varepsilon}$ follows.
Then by the Riesz theorem for a.e. $\omega$ there exists a function $f_2^\varepsilon  = f_2^\varepsilon(u_0) \in L^2(\mathbb R^d)$ such that $J_2^\varepsilon(\varphi) = (f_2^\varepsilon,\varphi)$. We emphasize that here we do not claim that the norm of $J_2^\eps$ admits
a uniform in $\eps$ estimate.

Next we show that the contribution of $f_2^\varepsilon$ to $w^\varepsilon$ is vanishing. To this end consider the function (additional corrector)
\begin{equation}\label{corr-u2}
u_2^\varepsilon (x,\omega) = (-L^\varepsilon +m)^{-1} f_2^\varepsilon (x, \omega).
\end{equation}
\begin{lemma}\label{l_u2small}
 $\|  u_2^\varepsilon\|_{L^2(\mathbb R^d)} \to 0$ as $\varepsilon \to 0$ for a.e. $\omega$.
\end{lemma}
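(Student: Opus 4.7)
The natural strategy is an energy argument. Pair the defining equation $(-L^\varepsilon+m)u_2^\varepsilon=f_2^\varepsilon$ with $u_2^\varepsilon$ itself to obtain
\begin{equation*}
m\,\|u_2^\varepsilon\|^2_{L^2(\mathbb R^d)}\;+\;\mathcal{E}^\varepsilon(u_2^\varepsilon,u_2^\varepsilon)\;=\;(f_2^\varepsilon,u_2^\varepsilon)\;=\;J_2^\varepsilon(u_2^\varepsilon),
\end{equation*}
where $\mathcal{E}^\varepsilon$ is the symmetric Dirichlet form associated with $-L^\varepsilon$, namely
$\mathcal{E}^\varepsilon(u,u)=\tfrac{1}{2\varepsilon^{2}}\iint a(z)\mu(\tfrac{x}{\varepsilon})\mu(\tfrac{x}{\varepsilon}-z)(u(x)-u(x-\varepsilon z))^2\,dx\,dz$. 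It will then suffice to show that $J_2^\varepsilon(u_2^\varepsilon)\le o(1)\|u_2^\varepsilon\|_{L^2}+o(1)\sqrt{\mathcal{E}^\varepsilon(u_2^\varepsilon,u_2^\varepsilon)}$, since Young's inequality absorbs both terms into the left-hand side and gives $\|u_2^\varepsilon\|_{L^2}^2\to 0$.

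\textbf{Splitting of $J_2^\varepsilon(u_2^\varepsilon)$.} Writing $G=\nabla\nabla u_0\in C_0^\infty(\mathbb R^d)$ and using the telescoping identity $G(x)u_2^\varepsilon(x)-G(x-\varepsilon z)u_2^\varepsilon(x-\varepsilon z)=(G(x)-G(x-\varepsilon z))u_2^\varepsilon(x)+G(x-\varepsilon z)(u_2^\varepsilon(x)-u_2^\varepsilon(x-\varepsilon z))$, I decompose $J_2^\varepsilon(u_2^\varepsilon)=A^\varepsilon+B^\varepsilon$. For $A^\varepsilon$ the mean-value bound $|G(x)-G(x-\varepsilon z)|\le\varepsilon|z|\|\nabla G\|_\infty$, the Cauchy--Schwarz inequality, and the finite second moment $\int a(z)|z|^2\,dz=\sigma^2$ give
\begin{equation*}
|A^\varepsilon|\;\le\;C\,\|\varepsilon\,\theta(\tfrac{\cdot}{\varepsilon},\omega)\|_{L^2(Q')}\,\|u_2^\varepsilon\|_{L^2(\mathbb R^d)}\;=\;o(1)\,\|u_2^\varepsilon\|_{L^2},
\end{equation*}
where $Q'$ is any fixed neighbourhood of $\mathrm{supp}\,u_0$ and the sublinearity $\|\varepsilon\,\theta(\tfrac{\cdot}{\varepsilon})\|_{L^2(Q')}\to 0$ is Proposition \ref{1corrector}.

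\textbf{Bounding $B^\varepsilon$.} Applying the Cauchy--Schwarz inequality with respect to the measure $a(z)\mu(\tfrac{x}{\varepsilon})\mu(\tfrac{x}{\varepsilon}-z)\,dx\,dz$ yields
\begin{equation*}
|B^\varepsilon|^2\;\le\;C\Big(\iint a(z)\mu\mu\,(u_2^\varepsilon(x)-u_2^\varepsilon(x-\varepsilon z))^2\,dx\,dz\Big)\cdot\Big(\iint a(z)|z|^2|\theta(\tfrac{x}{\varepsilon})|^2|G(x-\varepsilon z)|^2\,dx\,dz\Big).
\end{equation*}
The first factor equals $2\varepsilon^2\mathcal{E}^\varepsilon(u_2^\varepsilon,u_2^\varepsilon)$. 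For the second factor I substitute $x'=x-\varepsilon z$ to confine the spatial integration to $\mathrm{supp}\,G$ and then use the stationarity of increments $\theta(\tfrac{x'}{\varepsilon}+z)=\theta(\tfrac{x'}{\varepsilon})+\zeta_z(T_{x'/\varepsilon}\omega)$ from Proposition \ref{statincrements}. Splitting into $|z|\le\varepsilon^{-1/2}$ and $|z|>\varepsilon^{-1/2}$: in the former regime $x'-\varepsilon z$ stays in a bounded set and Proposition \ref{1corrector} gives $\int|\theta(\tfrac{x'}{\varepsilon})|^2\,dx'=o(\varepsilon^{-2})$, while $\int a|z|^2\le\sigma^2$; in the latter regime, the Birkhoff ergodic theorem applied to the stationary field $\int a(z)|z|^2|\zeta_z(0,\omega)|^2\,dz$ and the smallness of the tail of $a(z)|z|^2$ keep the contribution bounded uniformly in $\varepsilon$. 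Altogether the second factor is $o(\varepsilon^{-2})$, hence $|B^\varepsilon|\le o(1)\sqrt{\mathcal{E}^\varepsilon(u_2^\varepsilon,u_2^\varepsilon)}$.

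\textbf{Main obstacle.} The principal technical difficulty lies in controlling the ``tail in $z$'' of the second factor of $B^\varepsilon$: the sublinear-growth bound for $\varepsilon\theta(\cdot/\varepsilon)$ is a local statement, so for large displacements $z$ one cannot directly apply Proposition \ref{1corrector} to the shifted domain $\mathrm{supp}\,G+\varepsilon z$. This is where the stationary-increment structure $\zeta_z(\xi,\omega)=\zeta_z(0,T_\xi\omega)$ together with the Birkhoff theorem and the finite second moment of $a$ must be carefully combined; the symmetry $a(-z)=a(z)$ also plays a role in reorganising $J_2^\varepsilon$ before the split. Once these estimates for $A^\varepsilon$ and $B^\varepsilon$ are in place, the energy identity together with Young's inequality concludes the proof.
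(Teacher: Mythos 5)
Your overall strategy is correct and essentially the one the paper uses: test $(-L^\varepsilon+m)u_2^\varepsilon=f_2^\varepsilon$ against $u_2^\varepsilon$, telescope $J_2^\varepsilon(u_2^\varepsilon)$ into two pieces, control one by $o(1)\|u_2^\varepsilon\|_{L^2}$ and the other by $o(1)\bigl(\mathcal{E}^\varepsilon(u_2^\varepsilon,u_2^\varepsilon)\bigr)^{1/2}$, and conclude by Young's inequality. However, your particular telescoping and the resulting estimates have two concrete gaps.

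First, you split with $G(x)u_2^\varepsilon(x)-G(x-\varepsilon z)u_2^\varepsilon(x-\varepsilon z)=(G(x)-G(x-\varepsilon z))u_2^\varepsilon(x)+G(x-\varepsilon z)(u_2^\varepsilon(x)-u_2^\varepsilon(x-\varepsilon z))$; the paper instead uses $G(x)(u_2^\varepsilon(x)-u_2^\varepsilon(x-\varepsilon z))+(G(x)-G(x-\varepsilon z))u_2^\varepsilon(x-\varepsilon z)$. This matters: in your $B^\varepsilon$ the spatial factor multiplying the $u_2^\varepsilon$-difference is $G(x-\varepsilon z)$, not $G(x)$, so after Cauchy--Schwarz in the joint measure $a(z)\mu\mu\,dx\,dz$ the second factor is $\iint a(z)|z|^2|\theta(\tfrac{x}{\varepsilon})|^2|G(x-\varepsilon z)|^2\,dx\,dz$, where $x$ is \emph{not} confined near $\mathrm{supp}\,u_0$. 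After the change of variables and the increment decomposition $\theta(\tfrac{x'}{\varepsilon}+z)=\theta(\tfrac{x'}{\varepsilon})+\zeta_z(T_{x'/\varepsilon}\omega)$, the $\zeta$-contribution for $|z|>\varepsilon^{-1/2}$ needs the stationary field $\omega\mapsto\int a(z)|z|^2|\zeta_z(0,\omega)|^2\,dz$ to lie in $L^1(\Omega)$. That is \emph{not} established anywhere in the paper and does not follow from $\zeta_z\in L^2_M$, which only gives $\int a(z)|\zeta_z(0,\omega)|^2\,dz\in L^1(\Omega)$; the extra weight $|z|^2$ is genuinely more than the hypotheses provide. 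The paper avoids this precisely by choosing the other telescoping, so that the Dirichlet-form-paired term carries $\nabla\nabla u_0(x)$ and $x$ stays in $\mathrm{supp}\,u_0$; the tail in $z$ then appears only in the $G$-difference term, where one Cauchy--Schwarzes \emph{in $z$ alone} with the split $a(z)|z|\,|\zeta_z|=(a(z)^{1/2}|z|)(a(z)^{1/2}|\zeta_z|)$, so that $|z|^2$ and $|\zeta_z|^2$ end up in \emph{separate} factors and only $\int_{|z|>1/\varepsilon}a(z)|z|^2\,dz\to 0$ plus $\int a(z)|\zeta_z|^2\,dz\in L^1(\Omega)$ are needed.

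Second, your bound $|A^\varepsilon|\le C\,\|\varepsilon\theta(\tfrac{\cdot}{\varepsilon})\|_{L^2(Q')}\|u_2^\varepsilon\|_{L^2}$ is asserted without justification on the region where $G(x)=0$ but $G(x-\varepsilon z)\neq 0$. There the integrand is not zero (the mean-value bound on $G(x)-G(x-\varepsilon z)$ still applies), yet $x$ can lie arbitrarily far from $\mathrm{supp}\,u_0$, so the $L^2(Q')$-smallness of $\varepsilon\theta(\tfrac{\cdot}{\varepsilon})$ does not control $\theta(\tfrac{x}{\varepsilon})$. The paper handles exactly this region by noting that $G(x)=0$, $G(x-\varepsilon z)\neq 0$ and $\mathrm{dist}(\mathrm{supp}\,u_0,\partial B)>1$ force $|z|>1/\varepsilon$, changing variables $x\mapsto x+\varepsilon z$ to bring the spatial argument into $\mathrm{supp}\,u_0$, writing $\theta(\tfrac{x}{\varepsilon}+z)=\theta(\tfrac{x}{\varepsilon})+\zeta_z(T_{x/\varepsilon}\omega)$, using $1<\varepsilon|z|$ to convert $|z|$ into $\varepsilon|z|^2$ for the $\theta$-part, and the aforementioned $z$-only Cauchy--Schwarz for the $\zeta$-part. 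If you redo your $A^\varepsilon$ and $B^\varepsilon$ estimates with the paper's telescoping and this sequence of steps, the proof closes using only the facts the paper actually proves.
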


\begin{proof}
Taking $\varphi = u_2^\varepsilon$ we get
\begin{equation}\label{L1}
 ((-L^\varepsilon +m) u_2^\varepsilon, u_2^\varepsilon) = (f_2^\varepsilon, u_2^\varepsilon).
\end{equation}
Considering \eqref{L_eps} the left-hand side of \eqref{L1} can be rearranged as follows:
\begin{equation}\label{L1-LHS}
\begin{array}{l}
\displaystyle
- \frac{1}{\varepsilon^2} \int\limits_{\mathbb R^d} \int\limits_{\mathbb R^d} \,  a (z)   \, \mu (\frac{x}{\varepsilon}, \omega ) \mu (\frac{x}{\varepsilon} -z, \omega ) ( u_2^\varepsilon (x-\varepsilon z) -  u_2^\varepsilon(x)) dz \, u_2^\varepsilon (x) dx + m  \int\limits_{\mathbb R^d} (u_2^\varepsilon)^2 (x) dx \\ \displaystyle
= \, \frac12 \frac{1}{\varepsilon^2} \int\limits_{\mathbb R^d} \int\limits_{\mathbb R^d} \,  a (z)   \, \mu (\frac{x}{\varepsilon}, \omega ) \mu (\frac{x}{\varepsilon} -z, \omega ) ( u_2^\varepsilon (x-\varepsilon z) -  u_2^\varepsilon(x))^2 dz dx + m  \int\limits_{\mathbb R^d} (u_2^\varepsilon)^2 (x) dx.
\end{array}
\end{equation}
We denote
$$
G_1^2 = \frac{1}{2 \varepsilon^2} \int\limits_{\mathbb R^d} \int\limits_{\mathbb R^d} \,  a (z)   \, \mu (\frac{x}{\varepsilon}, \omega ) \mu (\frac{x}{\varepsilon} -z, \omega ) ( u_2^\varepsilon (x-\varepsilon z) -  u_2^\varepsilon(x))^2 dz dx, \quad G_2^2= m  \int\limits_{\mathbb R^d} (u_2^\varepsilon)^2 (x) dx.
$$
It follows from \eqref{J2eps} that the right-hand side of \eqref{L1} takes the form
\begin{equation}\label{L1-RHS}
\begin{array}{l}
\displaystyle
J_2^\varepsilon (u_2^\varepsilon) = \frac12 \int\limits_{\mathbb R^d} \int\limits_{\mathbb R^d}   a (z) z  \, \mu (\frac{x}{\varepsilon}, \omega ) \mu (\frac{x}{\varepsilon} -z, \omega ) \,  \theta (\frac{x}{\varepsilon}, \omega ) \big( \nabla \nabla u_0(x)  u_2^\varepsilon(x) -   \nabla \nabla u_0(x - \varepsilon z)  u_2^\varepsilon (x-\varepsilon z) \big) d x \, dz \\ \displaystyle
=  \frac12 \, \int\limits_{\mathbb R^d} \int\limits_{\mathbb R^d} \,  a (z) z  \, \mu (\frac{x}{\varepsilon}, \omega ) \mu (\frac{x}{\varepsilon} -z, \omega ) \,  \theta (\frac{x}{\varepsilon}, \omega )  \nabla \nabla u_0(x) \big( u_2^\varepsilon(x) -   u_2^\varepsilon (x-\varepsilon z) \big) d x \, dz \\[6mm] \displaystyle
+ \frac12 \int\limits_{\mathbb R^d} \int\limits_{\mathbb R^d}  a (z) z  \, \mu (\frac{x}{\varepsilon}, \omega ) \mu (\frac{x}{\varepsilon} -z, \omega ) \,  \theta (\frac{x}{\varepsilon}, \omega ) \big( \nabla \nabla u_0(x)  -   \nabla \nabla u_0(x - \varepsilon z) \big) u_2^\varepsilon (x-\varepsilon z) d x \, dz =\! \frac12 (I_1 + I_2).
\end{array}
\end{equation}
It is proved in Proposition \ref{1corrector} that a.s. $\|\eps  \theta (\frac x\eps,\omega)\|_{L^2(B)}\to 0$ as $\eps\to0$ for any
ball $B\subset\mathbb R^d$.
By the Cauchy-Schwartz inequality  we obtain the following upper bounds for $I_1$:
\begin{equation}\label{L1-RHS-I1}
\begin{array}{l}
\displaystyle
I_1 \le \left( \int\limits_{\mathbb R^d} \int\limits_{\mathbb R^d} \,  a (z) \mu (\frac{x}{\varepsilon}, \omega ) \mu (\frac{x}{\varepsilon} -z, \omega ) \big( u_2^\varepsilon(x) -   u_2^\varepsilon (x-\varepsilon z) \big)^2 d x \, dz \right)^{1/2} \\
\displaystyle
\left( \frac{1}{\varepsilon^2 } \, \int\limits_{\mathbb R^d} \int\limits_{\mathbb R^d} \,  a (z)|z|^2 \, \mu (\frac{x}{\varepsilon}, \omega ) \mu (\frac{x}{\varepsilon} -z, \omega ) \, \varepsilon^2   \big|\theta (\frac{x}{\varepsilon}, \omega )\big|^2  (\nabla \nabla u_0(x))^2 d x \, dz \right)^{1/2} \\
\displaystyle
\le \frac{1}{\varepsilon} \, o(1) \ \left(\frac12 \int\limits_{\mathbb R^d} \int\limits_{\mathbb R^d} \,  a (z) \mu (\frac{x}{\varepsilon}, \omega ) \mu (\frac{x}{\varepsilon} -z, \omega ) \big( u_2^\varepsilon(x) -   u_2^\varepsilon (x-\varepsilon z) \big)^2 d x \, dz \right)^{1/2} = G_1 \cdot o(1),
\end{array}
\end{equation}
where $o(1)\to0$ as $\eps\to0$. We turn to the second integral $I_2$. Let $B$ be a ball centered at the origin and such that
$\mathrm{supp}(u_0)\subset B$, $\mathrm{dist}(\mathrm{supp}(u_0),\partial B)>1$. Then
$$
\Big|\int\limits_{\mathbb R^d} \int\limits_{B} \,  a (z) z  \, \mu (\frac{x}{\varepsilon}, \omega ) \mu (\frac{x}{\varepsilon} -z, \omega ) \,  \theta (\frac{x}{\varepsilon}, \omega ) \big( \nabla \nabla u_0(x)  -   \nabla \nabla u_0(x - \varepsilon z) \big) u_2^\varepsilon (x-\varepsilon z) d x \, dz\Big|
$$
\begin{equation}\label{aaa1}
\leq C\int\limits_{\mathbb R^d} \int\limits_{B} \,  a (z) |z|^2  \, \big|\eps  \theta (\frac{x}{\varepsilon}, \omega )\big|\,
| u_2^\varepsilon (x-\varepsilon z)| d x \, dz\le \| u_2^\varepsilon \|_{L^2(\mathbb R^d)} \cdot o(1) = G_2 \cdot o(1).
\end{equation}
The integral over $B^c=\mathbb R^d\setminus B$ can be estimated in the following way:
$$
\Big|\int\limits_{\mathbb R^d} \int\limits_{B^c} \,  a (z) z  \, \mu (\frac{x}{\varepsilon}, \omega ) \mu (\frac{x}{\varepsilon} -z, \omega ) \,  \theta (\frac{x}{\varepsilon}, \omega ) \big( \nabla \nabla u_0(x)  -   \nabla \nabla u_0(x - \varepsilon z) \big) u_2^\varepsilon (x-\varepsilon z) d x \, dz\Big|
$$
$$
\Big|\int\limits_{\mathbb R^d} \int\limits_{B^c} \,  a (z) z  \, \mu (\frac{x}{\varepsilon}, \omega ) \mu (\frac{x}{\varepsilon} -z, \omega ) \,  \theta (\frac{x}{\varepsilon}, \omega )   \nabla \nabla u_0(x - \varepsilon z)  u_2^\varepsilon (x-\varepsilon z) d x \, dz\Big|
$$
\begin{equation}\label{aaa2}
\leq C\int\limits_{|z|\geq \frac1\eps} \int\limits_{B^c} \,  a (z) |z|  \, \big| \theta (\frac{x}{\varepsilon}, \omega )\big|\,
|\nabla \nabla u_0(x - \varepsilon z)|\,  |u_2^\varepsilon (x-\varepsilon z)|\, d x \, dz
\end{equation}
$$
\leq C\int\limits_{|z|\geq \frac1\eps} \int\limits_{\mathbb R^d} \,  a (z) |z|  \, \big| \theta (\frac{x}{\varepsilon}+z, \omega )\big|\,
|\nabla \nabla u_0(x)|\,  |u_2^\varepsilon (x)|\, d x \, dz
$$
$$
\leq C\int\limits_{|z|\geq \frac1\eps} \int\limits_{\mathbb R^d} \,  a (z) |z|  \,\Big[ \big| \theta (\frac{x}{\varepsilon}+z, \omega )
-  \theta (\frac{x}{\varepsilon}, \omega )\big|+\big| \theta (\frac{x}{\varepsilon}, \omega )\big|\Big]\,
|\nabla \nabla u_0(x)|\,  |u_2^\varepsilon (x)|\, d x \, dz.
$$
We have
$$
\int\limits_{|z|\geq \frac1\eps} \int\limits_{\mathbb R^d} \,  a (z) |z|  \,\big| \theta (\frac{x}{\varepsilon}, \omega )\big|\,
|\nabla \nabla u_0(x)|\,  |u_2^\varepsilon (x)|\, d x \, dz
$$
$$
\leq
\int\limits_{\mathbb R^d} \int\limits_{\mathbb R^d} \,  a (z) |z|^2  \,\big|\eps  \theta (\frac{x}{\varepsilon}, \omega )\big|\,
|\nabla \nabla u_0(x)|\,  |u_2^\varepsilon (x)|\, d x \, dz
 \leq G_2\cdot o(1)
$$
and
$$
\int\limits_{|z|\geq \frac1\eps} \int\limits_{\mathbb R^d} \,  a (z) |z|  \,\Big[ \big| \theta (\frac{x}{\varepsilon}+z, \omega )
-  \theta (\frac{x}{\varepsilon}, \omega )\big|\Big]\,
|\nabla \nabla u_0(x)|\,  |u_2^\varepsilon (x)|\, d x \, dz
$$
$$
\leq \int\limits_{|z|\geq \frac1\eps} \int\limits_{\mathbb R^d} \,  a (z) |z|  \, \big| \zeta_z (T_{\frac{x}{\varepsilon}}\omega )
\big|\,
|\nabla \nabla u_0(x)|\,  |u_2^\varepsilon (x)|\, d x \, dz
$$
$$
\leq \left( \int\limits_{|z|\geq \frac1\eps} \,  a (z) z^2  \, dz \right)^{\frac12}  \int\limits_{\mathbb R^d} \left( \int\limits_{\mathbb R^d} a(z) \big| \zeta_z (T_{\frac{x}{\varepsilon}}\omega )
\big|^2 \, dz \right)^{\frac12}
|\nabla \nabla u_0(x)|\,  |u_2^\varepsilon (x)|\, d x
$$
$$
\leq o(1) \, \left( \int\limits_{\mathbb R^d} \,  |u_2^\varepsilon (x)|^2 \, dx \right)^{\frac12} \left( \int\limits_{\mathbb R^d} \left( \int\limits_{\mathbb R^d} a(z) \big| \zeta_z (T_{\frac{x}{\varepsilon}}\omega )
\big|^2 \, dz \right)
|\nabla \nabla u_0(x)|^2\, d x \right)^{\frac12}  = G_2\cdot o(1).
$$
Since $\zeta_z (\omega) \in L^2_M$, the second integral in the right hand side here converges to a constant by the ergodic theorem.

Combining the last two estimates we conclude that the term on the right-hand side in \eqref{aaa2} does not exceed $G_2\cdot o(1)$.
Therefore, considering \eqref{aaa1}, we obtain $I_1\leq G_2\cdot o(1)$. This estimate and \eqref{L1-RHS-I1} imply that
$$
G_1^2 + G_2^2 = I_1 + I_2 \le (G_1 + G_2) \cdot o(1).
$$
Consequently, $G_1 \to 0$ and $G_2 = m^{1/2} \| u_2^\varepsilon \|_{L^2(\mathbb R^d)} \to 0$ as $\varepsilon \to 0$. Lemma is proved.
\end{proof}

\medskip

Thus we can rewrite  $I^\varepsilon_0$ (all the terms of the order $\varepsilon^{0}$) as follows
\begin{equation}\label{VV}
I^\varepsilon_0 = (D_1 - D_2) \cdot \nabla\nabla u_0 +  f_2^\varepsilon + S(\frac{x}{\varepsilon}, \omega) \cdot \nabla\nabla u_0, \qquad  S(\frac{x}{\varepsilon}, \omega) = \Psi_1(\frac{x}{\varepsilon}, \omega) - \Psi_2(\frac{x}{\varepsilon}, \omega),
\end{equation}
where the matrices $D_1$and  $D_2$ are defined in \eqref{J_1} and \eqref{D_2} respectively, and $ S(\frac{x}{\varepsilon}, \omega), \Psi_1(\frac{x}{\varepsilon}, \omega), \Psi_2(\frac{x}{\varepsilon}, \omega)$ are stationary fields with zero mean which are given by
\begin{equation}\label{Psi-1}
\Psi_1(\frac{x}{\varepsilon}, \omega) = \frac12 \int\limits_{\mathbb R^d} \,  a (z) z^2   \Big[  \mu (\frac{x}{\varepsilon}, \omega ) \mu (\frac{x}{\varepsilon} -z, \omega )  -
 E\{  \mu ( 0, \omega ) \mu (  -z, \omega ) \} \Big] dz,
\end{equation}
\begin{equation}\label{Psi-2}
\Psi_2(\frac{x}{\varepsilon}, \omega) = \frac12 \int\limits_{\mathbb R^d} \,  a (z) z  \Big[ \zeta_{-z} (\frac{x}{\varepsilon}, \omega ) \mu (\frac{x}{\varepsilon}, \omega ) \mu (\frac{x}{\varepsilon} -z, \omega )  -
 E\{ \zeta_{-z} (0, \omega) \mu ( 0, \omega ) \mu (  -z, \omega )\} \Big] dz.
\end{equation}
Denote
\begin{equation}\label{u3}
u_3^\varepsilon(x,\omega) = (-L^\varepsilon+m)^{-1} F^\varepsilon(x,\omega), \quad \mbox{where } \; F^\varepsilon(x, \omega) = S(\frac{x}{\varepsilon}, \omega) \cdot \nabla\nabla u_0(x).
\end{equation}
Since $ {\rm supp} \, u_0 \subset B$ is a bounded subset of $\mathbb R^d$ and
$$
 \int\limits_{\mathbb R^d} \,  a (z) |z|\, \big|\zeta_{-z} ( \omega )\big|  \,dz \in L^2(\Omega),
$$
then by the Birkhoff theorem $u_3^\varepsilon \in L^2(\mathbb R^d)$. Our goal is to prove that $\|u_3^\varepsilon \|_{L^2(\mathbb R^d)} \to 0$ as $\varepsilon \to 0$. We first show that the family $\{u_3^\varepsilon\}$ is bounded in $L^2(\mathbb R^d)$.
\begin{lemma}\label{Bound}
The family of functions $u_3^\varepsilon$ defined by \eqref{u3} is uniformly bounded in $L^2(\mathbb R^d)$ for e.a. $\omega$: $\|u_3^\varepsilon \|_{L^2(\mathbb R^d)} \le C$ for any $0<\varepsilon<1$.
\end{lemma}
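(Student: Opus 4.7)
The plan is to use a standard energy estimate together with an ergodic argument to control $\|F^\varepsilon\|_{L^2(\mathbb R^d)}$. Since the kernel in \eqref{L_eps} is symmetric in $(x,y)$, the operator $L^\varepsilon$ is self-adjoint in $L^2(\mathbb R^d)$, and the quadratic form
\[
(-L^\varepsilon u, u) \;=\; \frac{1}{2\varepsilon^{d+2}}\int_{\mathbb R^d}\!\int_{\mathbb R^d}\! a\Bigl(\tfrac{x-y}{\varepsilon}\Bigr)\mu\Bigl(\tfrac{x}{\varepsilon}\Bigr)\mu\Bigl(\tfrac{y}{\varepsilon}\Bigr)\bigl(u(y)-u(x)\bigr)^2\,dy\,dx \;\geq\; 0.
\]
Testing the defining equation $(-L^\varepsilon + m)u_3^\varepsilon = F^\varepsilon$ against $u_3^\varepsilon$ and using Cauchy--Schwarz therefore yields
\[
m\,\|u_3^\varepsilon\|_{L^2(\mathbb R^d)}^2 \;\leq\; \bigl((-L^\varepsilon+m)u_3^\varepsilon,u_3^\varepsilon\bigr) \;=\; (F^\varepsilon,u_3^\varepsilon) \;\leq\; \|F^\varepsilon\|_{L^2(\mathbb R^d)}\,\|u_3^\varepsilon\|_{L^2(\mathbb R^d)},
\]
so $\|u_3^\varepsilon\|_{L^2(\mathbb R^d)} \leq m^{-1}\|F^\varepsilon\|_{L^2(\mathbb R^d)}$. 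It thus suffices to bound $\|F^\varepsilon\|_{L^2(\mathbb R^d)}$ uniformly in $\varepsilon\in(0,1)$ almost surely.

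Since $u_0\in C_0^\infty(\mathbb R^d)$, the Hessian $\nabla\nabla u_0$ is bounded and compactly supported in some ball $B$. Hence
\[
\|F^\varepsilon\|_{L^2(\mathbb R^d)}^2 \;\leq\; \|\nabla\nabla u_0\|_{L^\infty}^2\int_{B}\bigl|S\bigl(\tfrac{x}{\varepsilon},\omega\bigr)\bigr|^2\,dx.
\]
The field $S(\xi,\omega)=\mathbf{S}(T_\xi\omega)=\Psi_1(\xi,\omega)-\Psi_2(\xi,\omega)$ is stationary with zero mean. I will check that $\mathbf{S}\in L^2(\Omega)$. For $\Psi_1$ this is immediate: by \eqref{lm} and the bound $\int a(z)|z|^2\,dz=\sigma^2$,
\[
|\mathbf{\Psi}_1(\omega)| \;\leq\; \alpha_2^2\,\sigma^2\qquad\text{a.s.}
\]
For $\Psi_2$, the Cauchy--Schwarz inequality under the $z$-integral gives
\[
|\mathbf{\Psi}_2(\omega)|^2 \;\leq\; C\,\alpha_2^4\,\sigma^2\int_{\mathbb R^d} a(z)\,|\zeta_{-z}(0,\omega)|^2\,dz,
\]
and by \eqref{thetaLM} together with \eqref{zetatheta} the expectation of the right-hand side is exactly $C\alpha_2^4\sigma^2\|\theta\|_{L^2_M}^2<\infty$. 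Therefore $\mathbf{S}\in L^2(\Omega)$.

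Finally, by the Birkhoff ergodic theorem applied to the stationary $L^2(\Omega)$-field $|\mathbf{S}|^2$,
\[
\int_{B}\bigl|S\bigl(\tfrac{x}{\varepsilon},\omega\bigr)\bigr|^2\,dx \;=\; \varepsilon^d\int_{B/\varepsilon}|\mathbf{S}(T_y\omega)|^2\,dy \;\xrightarrow[\varepsilon\to 0]{}\; |B|\,\mathbb E|\mathbf{S}|^2 \;<\;\infty \qquad\text{a.s.}
\]
Combining this a.s.\ convergence (which yields uniform boundedness for all sufficiently small $\varepsilon$) with the a.s.\ local $L^2$-regularity of $\mathbf{S}(T_\cdot\omega)$ (which handles the remaining bounded range of $\varepsilon$), we obtain $\int_B |S(x/\varepsilon,\omega)|^2\,dx \leq C(\omega)$ for all $\varepsilon\in(0,1)$. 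Inserting this into the energy estimate completes the proof. The only nontrivial step is the verification that $\mathbf{\Psi}_2\in L^2(\Omega)$, which is precisely where the sharp moment condition $\theta\in L^2_M$ from Theorem \ref{t_corrector} is used.
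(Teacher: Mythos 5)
Your proof is correct and follows essentially the same route as the paper: reduce to a uniform bound on $\|F^\varepsilon\|_{L^2}$ via the resolvent estimate $\|(-L^\varepsilon+m)^{-1}\|\le 1/m$, then control $\|F^\varepsilon\|_{L^2}$ through the Birkhoff ergodic theorem applied to the stationary field $S$. The paper phrases the second step as ``$\Psi_1,\Psi_2$ (hence $S$) converge weakly to zero in $L^2(B)$, therefore $F^\varepsilon$ is bounded,'' which tacitly requires $\mathbf S\in L^2(\Omega)$ so that Birkhoff applied to $|\mathbf S|^2$ furnishes the norm bound underlying weak convergence. You make this entirely explicit by applying Birkhoff directly to $|\mathbf S|^2$ and by separately verifying $\mathbf\Psi_1\in L^\infty(\Omega)$ and $\mathbf\Psi_2\in L^2(\Omega)$ via Cauchy--Schwarz and the $L^2_M$-bound \eqref{thetaLM}. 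You also take care of the technicality that the a.s.\ Birkhoff limit only controls small $\varepsilon$, observing that for $\varepsilon\in[\varepsilon_0,1)$ the a.s.\ local $L^2$-regularity of $y\mapsto\mathbf S(T_y\omega)$ bounds $\varepsilon^d\int_{B/\varepsilon}|\mathbf S(T_y\omega)|^2\,dy$ by $\int_{B/\varepsilon_0}|\mathbf S(T_y\omega)|^2\,dy$. In short: same mechanism, but your write-up is more self-contained and fills in the $L^2(\Omega)$-membership of $\mathbf S$ that the paper leaves implicit (the relevant moment fact is stated in the paper just before the lemma, in the form $\int a(z)|z|\,|\zeta_{-z}(\omega)|\,dz\in L^2(\Omega)$, but is not explicitly invoked inside the proof).
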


\begin{proof}
Since the operator $(-L^\varepsilon+m)^{-1} $ is bounded ($\| (-L^\varepsilon+m)^{-1} \| \le \frac{1}{m}$), then it is sufficient to prove that $\| F^\varepsilon(x,\omega) \|_{L^2(\mathbb R^d)} \le C$ uniformly in $\varepsilon$. By the Birkhoff ergodic theorem the functions $ \Psi_1(\frac{x}{\varepsilon}, \omega)$ and $\Psi_2(\frac{x}{\varepsilon}, \omega)$ a.s converge
to zero weakly in $L^2(B)$, so does $S(\frac{x}{\varepsilon}, \omega)$. Then $S(\frac{x}{\varepsilon}, \omega)\cdot \nabla\nabla
u_0$ a.s. converges to zero weakly in $L^2(\mathbb R^d)$.
This implies the desired boundedness.
\end{proof}


\begin{lemma}\label{Convergence} For any cube $B$ centered at the origin
 $\|u_3^\varepsilon \|_{L^2(B)} \ \to \ 0$ as $\varepsilon \to 0$ for e.a. $\omega$.
\end{lemma}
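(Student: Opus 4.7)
\noindent\textbf{Proof strategy for Lemma \ref{Convergence}.}
The plan is to combine an energy-type identity for $u_3^\varepsilon$ with a Fr\'echet--Kolmogorov compactness argument and the weak vanishing of the oscillatory coefficient $S(\cdot/\varepsilon)$ in $L^2_{\rm loc}(\mathbb R^d)$.

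First, I will test the equation $(-L^\varepsilon+m)u_3^\varepsilon=F^\varepsilon$ against $u_3^\varepsilon$ itself, symmetrizing exactly as in \eqref{L1-LHS}, to obtain the identity
\begin{equation*}
\frac{1}{2\varepsilon^2}\int_{\mathbb R^d}\!\int_{\mathbb R^d} a(z)\,\mu\bigl(\tfrac{x}{\varepsilon}\bigr)\mu\bigl(\tfrac{x}{\varepsilon}-z\bigr)\bigl(u_3^\varepsilon(x)-u_3^\varepsilon(x-\varepsilon z)\bigr)^2 dz\,dx + m\|u_3^\varepsilon\|_{L^2(\mathbb R^d)}^2 = (F^\varepsilon,u_3^\varepsilon).
\end{equation*}
By Lemma~\ref{Bound} together with the uniform $L^2(\mathbb R^d)$-bound on $F^\varepsilon$ derived in its proof, the right-hand side is uniformly bounded in $\varepsilon$, so both terms on the left-hand side are bounded. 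Invoking \eqref{add} and the lower bound $\mu\ge\alpha_1$ on the energy term will then produce the averaged discrete-gradient estimate
\begin{equation*}
\int_{{\bf B}}\frac{\|u_3^\varepsilon(\cdot)-u_3^\varepsilon(\cdot-\varepsilon z)\|_{L^2(\mathbb R^d)}^2}{\varepsilon^2}\,dz\le C.
\end{equation*}

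Next, I will upgrade this averaged bound to a uniform Lipschitz-type estimate $\|u_3^\varepsilon(\cdot+h)-u_3^\varepsilon(\cdot)\|_{L^2(\mathbb R^d)}\le C|h|$, valid for $h$ in any fixed compact set and uniformly in $\varepsilon$. The idea is to decompose an arbitrary displacement $h$ into a chain of increments of the form $\varepsilon z_j$ with $z_j\in{\bf B}$ and to iterate the triangle inequality, using translation invariance of the $L^2$-norm; averaging over $z\in{\bf B}$ removes the arbitrariness and delivers the pointwise bound. Combined with the uniform $L^2$-bound from Lemma~\ref{Bound}, the Fr\'echet--Kolmogorov criterion will then give precompactness of $\{u_3^\varepsilon\}$ in $L^2_{\rm loc}(\mathbb R^d)$.

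The last step is to pass to the limit. I will extract a subsequence $u_3^{\varepsilon_k}$ converging strongly in $L^2$ on the (bounded) support of $\nabla\nabla u_0$. Since $\Psi_1,\Psi_2$ defined in \eqref{Psi-1}--\eqref{Psi-2} are stationary, mean-zero and lie in $L^2(\Omega)$, so does $S$; the Birkhoff ergodic theorem therefore gives $S(\cdot/\varepsilon_k)\rightharpoonup 0$ weakly in $L^2_{\rm loc}(\mathbb R^d)$ almost surely. Pairing this weak vanishing with the strong convergence of $u_3^{\varepsilon_k}$ on ${\rm supp}\,\nabla\nabla u_0$ yields $(F^{\varepsilon_k},u_3^{\varepsilon_k})\to 0$, and the energy identity then forces $m\|u_3^{\varepsilon_k}\|_{L^2(\mathbb R^d)}^2\to 0$. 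A standard sub-subsequence argument gives $\|u_3^\varepsilon\|_{L^2(B)}\to 0$ along the full family. The main obstacle is the second step: the energy identity only controls discrete differences \emph{on average} over $z\in{\bf B}$, whereas Fr\'echet--Kolmogorov needs uniform control of $\|u_3^\varepsilon(\cdot+h)-u_3^\varepsilon(\cdot)\|_{L^2(\mathbb R^d)}$ for all small $h$. Converting between the two relies on the cube geometry of ${\bf B}$ secured by \eqref{add}, which is precisely what enables the triangle-iteration argument to reach every direction.
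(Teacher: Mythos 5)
Your proposal is correct, but it takes a genuinely different route from the paper. The paper cuts $u_3^\varepsilon$ off to a cube, periodizes, expands in Fourier series, and uses the spectral lower bound of Lemma \ref{Propc1c2} to build an explicit finite $\delta$-net in frequency space. You instead extract from \eqref{C-main} the averaged discrete-gradient bound, upgrade it to a modulus-of-continuity estimate by a chaining argument, and invoke the Kolmogorov--Riesz compactness criterion. Both routes deliver precompactness of $\{u_3^\varepsilon\}$ in $L^2(B)$; your final passage to the limit (weak convergence of $S(\cdot/\varepsilon)$ paired against strong subsequential convergence of $u_3^\varepsilon$ on ${\rm supp}\,\nabla\nabla u_0$) is exactly the paper's Lemma \ref{l_u3small}, which is where the paper actually finishes the argument. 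Your version avoids Fourier analysis entirely and is arguably more elementary, at the price of having to make the chaining step precise.

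Two small points you should tighten. First, the ``averaging removes the arbitrariness'' step deserves to be spelled out: for $|w|\le \ell/2$ ($\ell$ the edge length of ${\bf B}$) one writes, for every $z$ in the positive-measure set ${\bf B}\cap({\bf B}-w)$,
$$
\|u_3^\varepsilon(\cdot+\varepsilon w)-u_3^\varepsilon(\cdot)\|\le\|u_3^\varepsilon(\cdot+\varepsilon(w+z))-u_3^\varepsilon(\cdot)\|+\|u_3^\varepsilon(\cdot+\varepsilon z)-u_3^\varepsilon(\cdot)\|,
$$
then averages in $z$ over that set and applies Cauchy--Schwarz together with your averaged bound; this gives $\|u_3^\varepsilon(\cdot+\varepsilon w)-u_3^\varepsilon(\cdot)\|\le C'\varepsilon$ for all $|w|\le\ell/2$, independent of the direction of $w$. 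Second, and more importantly, the Lipschitz-type bound you then obtain by chaining is $\|u_3^\varepsilon(\cdot+h)-u_3^\varepsilon(\cdot)\|_{L^2(\mathbb R^d)}\le C(|h|+\varepsilon)$, \emph{not} $C|h|$ — for $|h|\ll\varepsilon$ only the $C\varepsilon$ term is available. This is not a genuine gap (the weaker bound still gives equicontinuity of any sequence with $\varepsilon_j\to0$, since the finitely many indices with $\varepsilon_j$ above a threshold can be handled individually), but the proof should state the bound correctly and make this last observation explicit, otherwise a reader will object that the modulus does not vanish as $|h|\to0$ for fixed small $\varepsilon$.
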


\begin{proof}
The first step of the proof is to show that any sequence $\{u_3^{\varepsilon_j} \}$,  $\varepsilon_j \to 0$, is compact in $L^2(B)$.
Using definition \eqref{u3} we have
$$
( (-L^\varepsilon+m) u_3^\varepsilon, u_3^\varepsilon) \ = \  (  F^\varepsilon, u_3^\varepsilon).
$$
The left-hand side of this relation can be rewritten as
\begin{equation}\label{L2-rhs}
\begin{array}{l}
\displaystyle
\int\limits_{\mathbb R^d} (-L^\varepsilon+m) u_3^\varepsilon(x)  u_3^\varepsilon(x) dx  \\ \displaystyle
= \, m  \int\limits_{\mathbb R^d} (u_3^\varepsilon(x))^2 dx  - \frac{1}{\varepsilon^2} \int\limits_{\mathbb R^d} \int\limits_{\mathbb R^d} \,  a (z)   \, \mu (\frac{x}{\varepsilon}, \omega ) \mu (\frac{x}{\varepsilon} -z, \omega) ( u_3^\varepsilon (x-\varepsilon z) -  u_3^\varepsilon(x)) u_3^\varepsilon (x) dz dx  \\ \displaystyle
= \, m  \int\limits_{\mathbb R^d} (u_3^\varepsilon(x))^2 dx  + \frac{1}{2 \varepsilon^2} \int\limits_{\mathbb R^d} \int\limits_{\mathbb R^d} \,  a (z)   \, \mu (\frac{x}{\varepsilon}, \omega ) \mu (\frac{x}{\varepsilon} -z, \omega) ( u_3^\varepsilon (x-\varepsilon z) -  u_3^\varepsilon(x))^2 dz dx.
\end{array}
\end{equation}
Consequently we obtain the following equality
\begin{equation}\label{u3-main}
 m  \int\limits_{\mathbb R^d} (u_3^\varepsilon(x))^2 dx  + \frac{1}{2 \varepsilon^2} \int\limits_{\mathbb R^d} \int\limits_{\mathbb R^d} \,  a (z)   \, \mu (\frac{x}{\varepsilon}, \omega ) \mu (\frac{x}{\varepsilon} -z, \omega) ( u_3^\varepsilon (x-\varepsilon z) -  u_3^\varepsilon(x))^2 dz dx = (  F^\varepsilon, u_3^\varepsilon).
\end{equation}
Considering the uniform boundedness of $F^\varepsilon$ and $ u_3^\varepsilon$, see Lemma \ref{Bound}, we immediately conclude that
\begin{equation}\label{C-main}
\frac{1}{\varepsilon^2} \int\limits_{\mathbb R^d} \int\limits_{\mathbb R^d} \,  a (z)   \, \mu (\frac{x}{\varepsilon}, \omega ) \mu (\frac{x}{\varepsilon} -z, \omega) ( u_3^\varepsilon (x-\varepsilon z) -  u_3^\varepsilon(x))^2 dz dx < K
\end{equation}
uniformly in $\varepsilon$ and for a.e. $\omega$. Therefore,
\begin{equation}\label{C-main_pure}
 m  \int\limits_{\mathbb R^d} (u_3^\varepsilon(x))^2 dx+\frac{1}{\varepsilon^2} \int\limits_{\mathbb R^d} \int\limits_{\mathbb R^d} \,  a (z)   ( u_3^\varepsilon (x-\varepsilon z) -  u_3^\varepsilon(x))^2 dz dx < K
\end{equation}
For the sake of definiteness assume that $B=[-1,1]^d$. The cubes of other size can be considered in exactly the same way.
Let $\phi(s)$ be an even $C_0^\infty(\mathbb R)$ function such that $0\leq \phi\leq 1$, $\phi(s)=1$ for $|s|\leq 1$,
  $\phi(s)=0$ for $|s|\geq 2$, and $|\phi'(s)|\leq 2$.    Denote  $\tilde u_3^\varepsilon(x)= \phi(|x|)u_3^\varepsilon(x)$.
  It is straightforward to check that
  \begin{equation}\label{C-main_modi1}
 m  \int\limits_{\mathbb R^d} (\tilde u_3^\varepsilon(x))^2 dx+\frac{1}{\varepsilon^2} \int\limits_{\mathbb R^d} \int\limits_{\mathbb R^d} \,  a (z)   (\tilde u_3^\varepsilon (x-\varepsilon z) - \tilde u_3^\varepsilon(x))^2 dz dx <  K
\end{equation}
We also choose $\mathcal{R}$ in such a way that $\int_{|z|\leq \mathcal{R}}a(z)dz\geq \frac12$ and introduce
$$
\tilde a(z) ={\bf 1}_{\{|z|\leq \mathcal{R}\}}\,a(z)\,\Big(\int_{|z|\leq \mathcal{R}}a(z)dz\Big)^{-1}.
$$
Then
  \begin{equation}\label{C-main_cut}
 m  \int\limits_{\mathbb R^d} (\tilde u_3^\varepsilon(x))^2 dx+\frac{1}{\varepsilon^2} \int\limits_{\mathbb R^d} \int\limits_{\mathbb R^d} \,  \tilde a (z)   (\tilde u_3^\varepsilon (x-\varepsilon z) - \tilde u_3^\varepsilon(x))^2 dz dx <  K.
\end{equation}
Letting $\tilde B = [-\pi, \pi]^d$, we denote by $\hat u_3^\varepsilon(x)$  the $\tilde B$ periodic extension  of
$\tilde u_3^\varepsilon(x)$.
For the extended function we have
  \begin{equation}\label{C-main_per}
 m  \int\limits_{\tilde B} (\hat u_3^\varepsilon(x))^2 dx+\frac{1}{\varepsilon^2} \int\limits_{\tilde B} \int\limits_{\mathbb R^d} \,  \tilde a (z)   (\hat u_3^\varepsilon (x-\varepsilon z) - \hat u_3^\varepsilon(x))^2 dz dx <  K.
\end{equation}
The functions $e_k(x) = \frac{1}{(2 \pi)^{d/2}} e^{ikx}, \; k \in Z^d$, form an orthonormal basis in $L^2(B)$, and
$$
\hat u_3^\varepsilon(x) = \sum_k \alpha_k^\varepsilon e_k(x), \quad  \hat u_3^\varepsilon (x-\varepsilon z) = \sum_k \alpha_k^\varepsilon e^{-i\varepsilon kz} e_k(x);
$$
$$
\| \hat u_3^\varepsilon(x)\|^2 = \sum_k (\alpha_k^\varepsilon)^2, \quad \|\hat u_3^\varepsilon (x-\varepsilon z) -
\hat u_3^\varepsilon(x) \|^2 =\sum_k (\alpha_k^\varepsilon)^2 |e^{-i\varepsilon k z} - 1|^2.
$$
Then inequality \eqref{C-main} is equivalent to the following bound
\begin{equation}\label{AAA1}
\frac{1}{\varepsilon^2} \sum_k (\alpha_k^\varepsilon)^2 \, \int\limits_{\mathbb R^d} \,  \tilde a (z)  |e^{-i\varepsilon k z} - 1|^2 dz < C.
\end{equation}

\begin{lemma}\label{Propc1c2}
For any $k \in Z^d$ and any $0<\varepsilon<1$ there exist constants $C_1, \ C_2$ (depending on $d$) such that
\begin{equation}\label{A2}
\int\limits_{\mathbb R^d} \, \tilde a (z)  |e^{-i\varepsilon k z} - 1|^2 dz \ge \min \{ C_1 k^2 \varepsilon^2, \ C_2 \}.
\end{equation}
\end{lemma}
\begin{proof}
For small $\varepsilon$, the lower bound by $C_1 k^2 \varepsilon^2$ follows from the expansion of $e^{-i \varepsilon k z}$ in the neighborhood of 0. For large enough $\varepsilon |k|\ge \varkappa_0>1$ we use the following inequality
$$
\int\limits_{\mathbb R^d} \, \tilde a (z)  |e^{-i\varepsilon k z} - 1|^2 dz \ge c_0 \int\limits_{[0,1]^d}  |e^{-i\varepsilon k z} - 1|^2 dz \ge c_0 \big(2-\frac{2}{\varkappa_0}\big)^d.
$$
\end{proof}

Let us consider a sequence $\varepsilon_j \to 0$. Using inequalities \eqref{AAA1}-\eqref{A2} we will construct now for any $\delta>0$ a finite $2 \delta$-net covering all elements of the sequence  $u_3^{\varepsilon_j}$. For any $\delta>0$ we take $|k_0|$ and $j_0$ such that
\begin{equation}\label{A3}
\frac{C}{\delta} < C_1 |k_0|^2 < \frac{C_2}{\varepsilon_{j_0}^2},
\end{equation}
where $C,\, C_1, \, C_2$ are the same constants as in \eqref{AAA1}-\eqref{A2}. Then it follows from \eqref{AAA1}-\eqref{A3} that
$$
\sum_{k:|k| \ge |k_0|} C_1 |k_0|^2 (\alpha_k^{\varepsilon_j})^2 < \sum_{k: |k| \ge |k_0|} \min \Big\{ C_1 |k|^2, \, \frac{C_2}{\varepsilon_j^2} \Big\} \, (\alpha_k^{\varepsilon_j})^2 < C \quad \mbox{ for any } \; j>j_0.
$$
Consequently we obtain the uniform bound on the tails of $\hat u_3^{\varepsilon_j}$ for all $j>j_0$:
\begin{equation}\label{A4}
\sum_{k:|k| \ge |k_0|}  (\alpha_k^{\varepsilon_j})^2 <  \frac{C}{C_1 |k_0|^2} < \delta.
\end{equation}
Denote by ${\cal H}_{k_0} \subset L^2(\tilde B)$ a linear span of basis vectors $\{ e_k, \ |k|<|k_0| \}$. Evidently, it is a finite-dimensional subspace. Then we have
$$
\hat u_3^\varepsilon = w_{k_0}^\varepsilon + \sum_{k:|k| \ge |k_0|}  \alpha_k^{\varepsilon} e_k, \quad \mbox{ where } \; w_{k_0}^\varepsilon = P_{{\cal H}_{k_0}} u_3^\varepsilon.
$$
Since we already know from Lemma \ref{Bound} that the functions $\hat u_3^{\varepsilon_j}$ are uniformly bounded in $L^2(\tilde B)$, then the functions  $w_{k_0}^{\varepsilon_j}$ are also uniformly bounded. Therefore there exists in  ${\cal H}_{k_0}$ a finite $\delta$-net covering the functions $\{ w_{k_0}^{\varepsilon_j}, \, j>j_0 \}$. Estimate \eqref{A4} implies that the same net will be the $2 \delta$-net for the functions  $\{\hat u_3^{\varepsilon_j}, \, j>j_0 \}$. We need to add to this net  $j_0$ elements to cover first $j_0$ functions  $\hat u_3^{\varepsilon_j}, \, j=1, \ldots, j_0$.

Thus we constructed the finite $2 \delta$-net for any $\delta>0$ which proves the compactness of  $\{\hat u_3^{\varepsilon}  \}$ as $\varepsilon \to 0 $  in $L^2(\tilde B)$.

Since $u_3^{\varepsilon}(x)=\hat u_3^{\varepsilon}(x)$ for $x\in B$, we conclude that the family $\{u_3^{\varepsilon}\}$ is compact
in $L^2(B)$.  In the same way one can show that this family is compact on any cube $B=[-L,L]^d$.
This completes the proof of Lemma.
\end{proof}

\begin{lemma}\label{l_u3small}
The following limit relation holds:  $\|u_3^\eps\|_{L^2(\mathbb R^d)}\to 0$, as $\eps\to0$.
\end{lemma}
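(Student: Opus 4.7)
The plan is to combine the interior convergence already established in Lemma \ref{Convergence} with a uniform-in-$\eps$ tail estimate on $u_3^\eps$. Because $u_0\in C_0^\infty(\mathbb R^d)$, the right-hand side $F^\eps(x,\omega)=S(x/\eps,\omega)\cdot\nabla\nabla u_0(x)$ of the equation $(-L^\eps+m)u_3^\eps=F^\eps$ is supported in a fixed ball $B_{R_0}$, independently of $\eps$.

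For $R>R_0$ I would fix a smooth cut-off $\chi_R$ that vanishes on $B_R$, equals $1$ on $\mathbb R^d\setminus B_{2R}$, and satisfies $|\nabla\chi_R|\le c/R$. Testing the equation against $u_3^\eps\chi_R^2$ makes the right-hand side drop out, since $F^\eps\chi_R^2\equiv 0$. Using the symmetry $a(-z)=a(z)$ to symmetrize $(L^\eps u_3^\eps,u_3^\eps\chi_R^2)$ and the decomposition $u_3^\eps(y)\chi_R^2(y)-u_3^\eps(x)\chi_R^2(x)=(u_3^\eps(y)-u_3^\eps(x))\chi_R^2(y)+u_3^\eps(x)(\chi_R^2(y)-\chi_R^2(x))$ yields an identity of the schematic form
\[
m\|u_3^\eps\chi_R\|_{L^2(\mathbb R^d)}^2 + \mathcal{D}_{\chi_R^2}[u_3^\eps] = -\mathcal{C}_R[u_3^\eps],
\]
where $\mathcal{D}_{\chi_R^2}[u_3^\eps]\ge 0$ is the Dirichlet form weighted by $\chi_R^2(y)$ and $\mathcal{C}_R$ is the cross term weighted by $\chi_R^2(y)-\chi_R^2(x)$.

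I would then discard the nonnegative term $\mathcal{D}_{\chi_R^2}$ and bound $|\mathcal{C}_R|$ by Cauchy--Schwarz. One Cauchy--Schwarz factor is the full Dirichlet form, already controlled by a constant $K$ through inequality \eqref{C-main}. The other factor is bounded using $|\chi_R^2(y)-\chi_R^2(x)|\le (2c/R)|x-y|$, the change of variables $z=(x-y)/\eps$, the second moment bound in \eqref{M2}, and the uniform $L^2$ bound from Lemma \ref{Bound}. The scale $\eps$ cancels cleanly after this change of variables, leaving $|\mathcal{C}_R|\le C'/R$ with $C'$ independent of $\eps$; hence
\[
\|u_3^\eps\|_{L^2(\mathbb R^d\setminus B_{2R})}^2\le \|u_3^\eps\chi_R\|_{L^2(\mathbb R^d)}^2\le C''/R \quad \text{for all } \eps\in(0,1).
\]

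Given such a uniform tail bound, the conclusion is immediate: for any $\delta>0$ I would pick $R$ so large that $C''/R<\delta/2$, and then apply Lemma \ref{Convergence} to the cube $[-2R,2R]^d$ to obtain $\|u_3^\eps\|_{L^2(B_{2R})}^2<\delta/2$ for $\eps$ sufficiently small; adding these gives $\|u_3^\eps\|_{L^2(\mathbb R^d)}^2<\delta$. The main obstacle I foresee is verifying that $\eps$ cancels in the cross-term estimate; this hinges essentially on the second-moment assumption $\sigma^2<\infty$ in \eqref{M2}, which lets the non-local scale $\eps$ in the kernel $a(\cdot/\eps)$ be traded against the macroscopic scale $R$ of the cut-off. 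Without this moment condition the tail bound would not be independent of $\eps$.
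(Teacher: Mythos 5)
Your proof is correct, but it takes a genuinely different route from the paper's, and it is instructive to compare. The paper's argument is shorter: it works directly with the global energy identity \eqref{u3-main}, namely $m\|u_3^\eps\|_{L^2(\mathbb R^d)}^2 + \text{(Dirichlet form)} = (F^\eps, u_3^\eps)$, and observes that the right-hand side is an inner product over the \emph{fixed} ball $B\supset\mathrm{supp}\,u_0$ of a weakly null sequence $F^\eps$ with a sequence $u_3^\eps$ that is compact in $L^2(B)$ (the compactness established in the proof of Lemma \ref{Convergence}). The weak--strong pairing forces $(F^\eps, u_3^\eps)\to 0$, and since both terms on the left are nonnegative, the global $L^2(\mathbb R^d)$ norm collapses to zero in one stroke. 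You instead split $\mathbb R^d$ into an interior cube, where you invoke the conclusion of Lemma \ref{Convergence} directly, and an exterior region, where you prove a Caccioppoli-type tail estimate $\|u_3^\eps\|^2_{L^2(\mathbb R^d\setminus B_{2R})}\le C''/R$ uniform in $\eps$ by testing the equation against $u_3^\eps\chi_R^2$ with a Lipschitz cut-off of scale $R$ and exploiting the second-moment condition \eqref{M2}, the a priori $L^2$ bound of Lemma \ref{Bound}, and the uniform Dirichlet-form bound \eqref{C-main}. Your cross-term estimate is set up correctly: after symmetrizing and applying Cauchy--Schwarz, the Lipschitz bound $|\chi_R^2(y)-\chi_R^2(x)|\le (2c/R)|x-y|$ and the substitution $z=(x-y)/\eps$ convert the $1/\eps^2$ prefactor into $\sigma^2/R^2$, so $\eps$ indeed cancels. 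What you gain over the paper's proof is a quantitative, $\eps$-uniform decay of the tail, which the paper never makes explicit; what the paper gains is brevity, at the cost of leaning on the more abstract interplay between weak convergence of $F^\eps$ and compactness of $u_3^\eps$ in $L^2(B)$.
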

\begin{proof}
We go back to formula \eqref{u3-main}. On the right-hand side of this equality we have the inner product of two sequences $F^\varepsilon$ and $u_3^\varepsilon$ Since the  sequence $F^\eps \rightharpoonup  0$ weakly in $L^2(B)$, and the sequence $u_3^\varepsilon$ is compact in $L^2(B)$,  the product $(F^\varepsilon, u_3^\varepsilon) \to 0$ as $\varepsilon \to 0$.
Therefore, both integrals on the left-hand side of  \eqref{u3-main} also tend to zero as $\varepsilon \to 0$, and we obtain that $\| u_3^\varepsilon \|_{L^2(\mathbb R^d)} \to 0, \ \varepsilon \to  0$.
\end{proof}

Denote by $\Theta$ the matrix $\Theta = D_1 - D_2$, where  $D_1, \, D_2$ are defined by \eqref{J_1}, \eqref{D_2}. Our next goal is to show that $D_1 - D_2$ is a positive definite matrix.

\begin{proposition}
The matrix $\Theta = D_1 - D_2$ is positive definite:
\begin{equation}\label{Positive}
 \Theta \ = \  \frac12 \, \int\limits_{\mathbb R^d}  \int\limits_{\Omega} \big(z\otimes z - z \otimes \zeta_{-z} (0, \omega ) \big) \,  a (z) \, \mu (0, \omega ) \mu ( -z, \omega)   \, dz \,   d P(\omega) > 0.
\end{equation}
\end{proposition}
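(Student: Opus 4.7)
For any $\xi\in\mathbb R^d$, set $v^*(z,\omega):=\xi\cdot\theta(z,\omega)$. My plan is to establish the energy identity
\[
\xi^T\Theta\xi \;=\; \tfrac12\int_\Omega\!\int_{\mathbb R^d} a(z)\,\mu(z,\omega)\,\mu(0,\omega)\,\bigl[(\xi\cdot z)+v^*(z,\omega)\bigr]^2\,dz\,dP(\omega).
\]
Once this is proved, $\xi^T\Theta\xi\ge 0$ is immediate. If equality holds, then since $\mu\ge\alpha_1>0$ by \eqref{lm}, the integrand must vanish for a.e.\ $z\in\mathrm{supp}(a)$ and a.e.\ $\omega$, so $\xi\cdot\theta(z,\omega)=-\xi\cdot z$ on that set. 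Taking the expectation and invoking \eqref{Etheta} (i.e., $\mathbb E[\theta(z,\cdot)]=0$) forces $\xi\cdot z=0$ for a.e.\ $z\in\mathrm{supp}(a)$, and assumption \eqref{add} then yields $\xi=0$, proving $\Theta>0$.

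\textbf{Reduction of the energy identity.} Changing variable $z\mapsto -z$ in \eqref{Positive} (using $a(-z)=a(z)$ and $\zeta_{-z}(0,\cdot)=\theta(-z,\cdot)$) recasts the defining formula as
\[
2\xi^T\Theta\xi\;=\;\int_\Omega\!\int_{\mathbb R^d} a(z)\,\mu(z,\omega)\,\mu(0,\omega)\,(\xi\cdot z)\bigl[(\xi\cdot z)+v^*(z,\omega)\bigr]\,dz\,dP.
\]
Splitting $\bigl[(\xi\cdot z)+v^*\bigr]^2=(\xi\cdot z)\bigl[(\xi\cdot z)+v^*\bigr]+v^*\bigl[(\xi\cdot z)+v^*\bigr]$ shows that the energy identity is equivalent to the single orthogonality relation
\[
(\star)\qquad\int_\Omega\!\int_{\mathbb R^d} a(z)\,\mu(z,\omega)\,\mu(0,\omega)\,\bigl[(\xi\cdot z)+v^*(z,\omega)\bigr]\,v^*(z,\omega)\,dz\,dP=0.
\]

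\textbf{Proof of $(\star)$.} Substituting $z\mapsto -z$ in the corrector equation \eqref{korrkappa1} gives, for a.e.\ $\omega$,
\[
\int_{\mathbb R^d} a(z)\,\mu(z,\omega)\,\bigl[(\xi\cdot z)+v^*(z,\omega)\bigr]\,dz = 0.
\]
Multiplying by $\mu(0,\omega)\psi(\omega)$ for an arbitrary $\psi\in L^\infty(\Omega)$ and integrating over $\Omega$ yields orthogonality of $(\xi\cdot z)+v^*$ to the test field $\psi(\omega)$ in the Hilbert space $H:=L^2\bigl(a(z)\mu(z,\omega)\mu(0,\omega)\,dz\,dP\bigr)$, which is equivalent to $L^2_M$ by \eqref{lm}. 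Performing in this identity the measure-preserving substitution $\omega\mapsto T_{-z}\omega$ followed by $z\mapsto -z$, and using $\mu(y,T_s\omega)=\mu(y+s,\omega)$, $a(-z)=a(z)$, together with the identity $v^*(z,T_{-z}\omega)=-v^*(-z,\omega)$ (a direct consequence of \eqref{thetaVIP} applied with $z_1=-z$, $z_2=z$), produces the same identity but with $\psi(T_z\omega)$ in place of $\psi(\omega)$. Subtracting the two gives orthogonality of $(\xi\cdot z)+v^*$ in $H$ to every exact increment $\phi(z,\omega)=\psi(T_z\omega)-\psi(\omega)$. By the construction in \eqref{theta}, $v^*$ is the weak $L^2_M$-limit of the exact increments $u^\delta=\xi\cdot\bigl[\varkappa^\delta(T_z\omega)-\varkappa^\delta(\omega)\bigr]$, so $v^*$ lies in the (weakly closed = norm closed) linear span of exact increments in $H$; continuity of the $H$-inner product then extends the orthogonality to $v^*$ itself, giving $(\star)$.

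\textbf{Main obstacle.} The decisive step is $(\star)$: the corrector equation delivers orthogonality only against test functions $\psi(\omega)$ independent of $z$, while the target field $v^*(z,\omega)$ depends on both variables. The symmetrization $\omega\mapsto T_{-z}\omega$, $z\mapsto -z$, which hinges on the kernel symmetry $a(-z)=a(z)$ and the stationary-increment structure of $\theta$, is what upgrades the naive $\omega$-only orthogonality to orthogonality against arbitrary exact increments; the weak-closure identification of $v^*$ as a limit of such increments, already obtained in Section \ref{s_corr}, then closes the gap by density.
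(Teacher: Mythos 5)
Your argument is correct, and it proves something strictly stronger than the paper's by a genuinely different route. The paper never establishes the energy \emph{identity} $\eta^T\Theta\eta=\frac12\int\!\!\int a(z)\mu(z,\omega)\mu(0,\omega)\,[(\eta\cdot z)+\eta\cdot\theta(z,\omega)]^2\,dz\,dP$; it only proves the one-sided inequality $\eta^T\Theta\eta\ge\frac12\int\!\!\int a\mu\mu[(\eta\cdot z)+\eta\cdot\theta]^2$, obtained by passing to the limit $\delta\to0$ in the regularized energy balance \eqref{Prop2_etabis} and invoking weak lower semicontinuity of the $L^2_M$-norm (the first term $\delta\int(\varkappa_\eta^\delta)^2\mu\,dP\ge0$ is simply dropped). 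The inequality suffices for strict positivity, which is all the proposition claims, so the paper stops there. Your route instead proves the orthogonality relation $(\star)$ directly, which upgrades the inequality to an equality: you test the pointwise corrector equation (at $\xi=0$) against $\mu(0,\omega)\psi(\omega)$, then apply the measure-preserving symmetrization $\omega\mapsto T_{-z}\omega$, $z\mapsto-z$ (relying on $a(-z)=a(z)$ and $\theta(z,T_{-z}\omega)=-\theta(-z,\omega)$, which follows from \eqref{thetaVIP} with $z_1=z$, $z_2=-z$ --- a trivial indexing slip in your parenthetical but the identity is right) to get the same orthogonality against $\psi(T_z\omega)$; subtracting gives orthogonality in the weighted space $H\simeq L^2_M$ against all exact increments $\psi(T_z\omega)-\psi(\omega)$. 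Since $\eta\cdot\theta$ is by construction \eqref{theta} a weak $L^2_M$-limit of such increments, it lies in the norm closure of that subspace, and continuity of the $H$-pairing extends the orthogonality to $\eta\cdot\theta$ itself, proving $(\star)$. Your concluding strict-positivity step is also slightly different (and arguably cleaner): the paper invokes the a.s.\ sublinear growth of $\theta$ to rule out $\eta\cdot\theta(z,\cdot)\equiv-\eta\cdot z$ on $\mathrm{supp}(a)$, whereas you take $\mathbb E$ and use $\mathbb E\theta(z,\cdot)=0$ from \eqref{Etheta} to force $\eta\cdot z=0$ on $\mathrm{supp}(a)$, then \eqref{add}. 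The one technical point worth flagging: the $z$-dependent change of variables $\omega\mapsto T_{-z}\omega$ inside the double integral requires Fubini, which holds here because $\eta\cdot z$, $\eta\cdot\theta\in L^2_M$ and $\mu$ is bounded, so the double integral is absolutely convergent --- this should be stated explicitly.
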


\begin{proof}
We recall that $\varkappa^\delta(\omega)$ stands for a unique solution of equation \eqref{A-delta}. Letting
$\varkappa_\eta^\delta(\omega)=\eta\cdot\varkappa^\delta(\omega)$, $\eta\in\mathbb R^d\setminus \{0\}$,
one can easily obtain
\begin{equation}\label{Prop2_eta}
\begin{array}{c}
\displaystyle
\delta \int\limits_\Omega \big(\varkappa_\eta^\delta(\omega)\big)^2\mu(\omega)\, dP(\omega)
 - \int\limits_{\mathbb R^d} \int\limits_\Omega a (z) \mu ( T_z \omega ) \big( \varkappa_\eta^\delta (T_z \omega ) - \varkappa_\eta^\delta ( \omega) \big)  \varkappa_\eta^\delta ( \omega)\mu(\omega) \,  dz \, dP(\omega) \\ \displaystyle
 =   \int\limits_{\mathbb R^d} \int\limits_\Omega (\eta\cdot z) a(z) \varkappa_\eta^\delta(\omega) \mu(T_z \omega) \mu(\omega)  \, dz \, dP(\omega).
\end{array}
\end{equation}
In the same way as in the proof of Proposition \ref{spectrA},
we derive the following relation:
\begin{equation}\label{Prop2_etabis}
\begin{array}{c}
\displaystyle
\delta \int\limits_\Omega \big(\varkappa_\eta^\delta(\omega)\big)^2\mu(\omega)\, dP(\omega)
  +\frac12\int\limits_{\mathbb R^d} \int\limits_\Omega a (z) \mu ( T_z \omega ) \big( \varkappa_\eta^\delta (T_z \omega ) - \varkappa_\eta^\delta ( \omega) \big)^2\mu(\omega) \,  dz \, dP(\omega) \\ \displaystyle
 = - \frac12  \int\limits_{\mathbb R^d} \int\limits_\Omega (\eta\cdot z) a(z)\big( \varkappa_\eta^\delta (T_z \omega ) - \varkappa_\eta^\delta ( \omega) \big) \mu(T_z \omega) \mu(\omega)  \, dz \, dP(\omega).
\end{array}
\end{equation}
According to \eqref{theta} the sequence $\eta\cdot(\varkappa_\eta^{\delta_j} (T_z \omega ) - \varkappa_\eta^{\delta_j} ( \omega))$
converges weakly in $L^2_M $ as $\delta_j\to 0$ to $\eta\cdot\theta(z,\omega)$. Passing to the limit $\delta_j\to0$
in relation \eqref{Prop2_etabis} and considering the lower semicontinuity of the $L^2_M$ norm with respect to the weak
topology, we arrive at the following inequality
\begin{equation}\label{est_ineq}
\frac12\int\limits_{\mathbb R^d} \int\limits_\Omega a (z) \mu ( T_z \omega ) \big(\eta\cdot\theta(z,\omega) \big)^2\mu(\omega) \,  dz \, dP(\omega) \leq
- \frac12  \int\limits_{\mathbb R^d} \int\limits_\Omega (\eta\cdot z) a(z)\big( \eta\cdot\theta(z,\omega) \big) \mu(T_z \omega) \mu(\omega)  \, dz \, dP(\omega).
\end{equation}
Therefore,
$$
\Theta \eta\cdot\eta= \frac12 \, \eta_i\eta_j\int\limits_{\mathbb R^d}  \int\limits_{\Omega} \big(z^i z^j - z^i  \zeta^j_{-z} (0, \omega ) \big) \,  a (z) \, \mu (0, \omega ) \mu ( -z, \omega)   \, dz \,   d P(\omega)
$$
$$
=\frac12  \int\limits_{\mathbb R^d} \int\limits_\Omega \big((\eta\cdot z)^2+(\eta\cdot z) (\eta\cdot \theta(z,\omega))\big) \,  a (z) \, \mu (0, \omega ) \mu ( z, \omega)   \, dz \,   d P(\omega).
$$
Combining the latter relation with \eqref{est_ineq} we obtain
$$
\Theta \eta\cdot\eta\geq \frac12  \int\limits_{\mathbb R^d} \int\limits_\Omega \big((\eta\cdot z)+(\eta\cdot z) (\eta\cdot \theta(z,\omega))\big)^2 \,  a (z) \, \mu (0, \omega ) \mu ( z, \omega)   \, dz \,   d P(\omega).
$$
Since  $\theta(z, \omega)$ is a.s. a function of sublinear growth in $z$, we conclude that   $ \eta\cdot\theta(z, \omega) \not \equiv  \eta\cdot z$, consequently the integral on the right-hand side here is strictly positive.
This yields the desired positive definiteness.
\end{proof}

\section{Estimation of the remainder $ \phi_\varepsilon $}\label{s_estrem}

In this section we consider the remainder $ \phi_\varepsilon (x, \omega)$ given by (\ref{14}) and  prove that $\|\phi_\varepsilon\|_{L^2(\mathbb R^d)}$ vanishes a. s. as $\varepsilon \to 0$.

\begin{lemma}\label{reminder}

Let $u_0 \in {\cal{S}}(\mathbb R^d)$. Then a.s.
\begin{equation}\label{fi}
\| \phi_\varepsilon (\cdot, \omega) \|_{L^2(\mathbb R^d)} \ \to \ 0 \quad \mbox{ as } \; \varepsilon \to 0.
\end{equation}
\end{lemma}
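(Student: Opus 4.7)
The plan is to decompose $\phi_\eps = T_1 + T_{23}$, where $T_1$ is the first summand in \eqref{14} (a second-order Taylor remainder of $u_0$) and $T_{23}$ is the sum of the remaining two summands, and to show that each vanishes in $L^2(\mathbb R^d)$ a.s.\ as $\eps \to 0$.

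For $T_1$, using $\int_0^1(1-t)\,dt = \tfrac12$ one recasts it as
$$T_1(x) = \int a(z)\,\mu(\tfrac{x}{\eps})\mu(\tfrac{x}{\eps}-z) \int_0^1 (1-t)\bigl[\nabla\nabla u_0(x-\eps z t) - \nabla\nabla u_0(x)\bigr] z\otimes z\,dt\,dz.$$
Cauchy--Schwarz inside the integral with weight $a(z)|z|^2\,dz$ (of total mass $\sigma^2$), the boundedness $|\mu|\le\alpha_2$, Fubini, and continuity of translation in $L^2(\mathbb R^d)$ (combined with the integrable majorant $a(z)|z|^2\cdot 4\|\nabla\nabla u_0\|_{L^2}^2$) give $\|T_1\|_{L^2}\to 0$ by dominated convergence; this is the routine half.

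The decisive step is for $T_{23}$. The Taylor identity
$$\nabla u_0(x-\eps z) - \nabla u_0(x) + \eps z\,\nabla\nabla u_0(x) = \eps^2\,z\otimes z\int_0^1 (1-t)\,\nabla^3 u_0(x-\eps z t)\,dt$$
shows that the $-\eps z\,\nabla\nabla u_0(x)$ piece inside the second summand of \eqref{14} annihilates the third summand exactly, producing
$$T_{23}(x) = \eps\,\mu(\tfrac{x}{\eps})\int a(z)\,\mu(\tfrac{x}{\eps}-z)\,\theta(\tfrac{x}{\eps}-z)\,z\otimes z\,H(x,z)\,dz,$$
with $H(x,z) = \int_0^1 (1-t)\,\nabla^3 u_0(x-\eps z t)\,dt$ uniformly bounded and vanishing unless $x\in K + \eps z[0,1]$ for $K := \mathrm{supp}(u_0)$. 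The prefactor $\eps$ is essential: under the substitution $y = x - \eps z$ the combination $\eps\,\theta(\tfrac{x}{\eps}-z)$ becomes $\eps\,\theta(\tfrac{y}{\eps})$, to which Property~B of Proposition~\ref{1corrector} applies. Testing $T_{23}$ against $\varphi\in L^2(\mathbb R^d)$, then applying Minkowski in $z$ and Cauchy--Schwarz in $y$, yields
$$\|T_{23}\|_{L^2} \le C\int a(z)|z|^2\,\bigl\|\eps\,\theta(\cdot/\eps)\bigr\|_{L^2(K+\eps z[-1,0])}\,dz.$$

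The final estimate proceeds by splitting the $z$-integral at $|z| = R(\eps)$ with $R(\eps) \to \infty$ and $\eps R(\eps) \to 0$: on $|z| \le R(\eps)$ the tube $K + \eps z[-1,0]$ lies in a fixed bounded Lipschitz domain $Q$, and Proposition~\ref{1corrector} bounds that region's contribution by $\sigma^2\,\|\eps\,\theta(\cdot/\eps)\|_{L^2(Q)} \to 0$. The tail $|z| > R(\eps)$ is the main technical obstacle: the tube has volume $\sim 1+\eps|z|$ and only the second moment of $a$ is assumed, so the naive bound $\|\eps\,\theta(\cdot/\eps)\|_{L^2(\text{tube})}^2 \le C(1+\eps|z|)$ obtained by covering with unit cubes (cf.\ Lemma~\ref{LemmaC}) would introduce a factor $\eps\int a(z)|z|^3\,dz$ that need not converge. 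I would resolve this by decomposing $\theta(\tfrac{x}{\eps}-z) = \theta(\tfrac{x}{\eps}) + \zeta_{-z}(\tfrac{x}{\eps})$: the stationary increment $\zeta_{-z}$ is handled via its $L^2_M$-bound from Step~1 of the proof of Theorem~\ref{t_corrector} together with the Birkhoff ergodic theorem, while the non-stationary $\theta(\tfrac{x}{\eps})$ contribution is controlled by exploiting the compact-support constraint inherited from $\nabla^3 u_0$ and Proposition~\ref{1corrector}.
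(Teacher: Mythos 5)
Your decomposition $\phi_\eps = T_1 + T_{23}$ and the handling of $T_1$ match the paper, and the key algebraic move on $T_{23}$ (the exact second--order Taylor identity for $\nabla u_0$, producing the kernel $H(x,z)$) is correct; so is the identification of the difficulty in the $z$-tail and the proposed fix $\theta(\tfrac x\eps - z) = \theta(\tfrac x\eps) + \zeta_{-z}(\tfrac x\eps)$, which is exactly the decomposition the paper uses for its $\gamma_\eps^{>}$. However, there is a genuine gap in the way you close the argument for the $\theta(\tfrac x\eps)$ piece in the tail.

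You claim that the $\theta(\tfrac x\eps)$ contribution ``is controlled by exploiting the compact-support constraint inherited from $\nabla^3 u_0$ and Proposition \ref{1corrector}''. But the support that $\nabla^3 u_0$ imposes through $H(x,z)$ is precisely the tube $K + \eps z[0,1]$, which grows linearly in $\eps|z|$ and is \emph{not} contained in any fixed bounded domain as $z$ ranges over the tail $|z|>R(\eps)$. Proposition \ref{1corrector} applies only on a fixed bounded Lipschitz domain, so it gives you nothing on this unbounded union of tubes; and the unit-cube covering bound $\|\eps\theta(\cdot/\eps)\|^2_{L^2(\text{tube})}\lesssim(1+\eps|z|)$ reinstates exactly the third-moment problem you identified yourself. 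Thus the sketch as written does not close in the regime that is ``the main technical obstacle'' by your own admission.

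What is missing is that for $|\eps z|$ large (say $|\eps z|>2L$ with $\mathrm{supp}\,u_0\subset B_{L/2}$) the third-order Taylor expansion should \emph{not} be used. In that regime one goes back to the untransformed combination $\frac1\eps(\nabla u_0(x-\eps z)-\nabla u_0(x))+z\nabla\nabla u_0(x)$, observes that $\nabla u_0(x-\eps z)\equiv 0$ there, and is left with $-\frac1\eps\nabla u_0(x)+z\nabla\nabla u_0(x)$. This expression is supported in $x$ on the \emph{fixed} set $K$ (not on a $z$-dependent tube), so the $\theta(\tfrac x\eps)$ piece is controlled by $\|\eps\theta(\cdot/\eps)\|_{L^2(B_L)}\to 0$ together with the tail factor $\frac1{\eps^2}\int_{|z|>2L/\eps}a(z)\,dz\to 0$, while the $\zeta_{-z}$ piece is handled by Cauchy--Schwarz in $z$ against $\big(\int_{|z|>2L/\eps}|z|^2 a(z)\,dz\big)^{1/2}\to 0$, the $L^2_M$-bound on $\zeta$, and Birkhoff. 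This is precisely what the paper does in its treatment of $\gamma_\eps^{>}$ and of $\phi_\eps^{(2)}\chi_{|x|>L}$. In short: your overall strategy and the $\theta=\theta+\zeta$ decomposition are the right ones, but you must \emph{restrict} the third-order Taylor expansion to the regime $|\eps z|\lesssim 1$; applying it uniformly over all $z$ destroys the compact-support structure that makes the $\theta(\tfrac x\eps)$ contribution estimable.
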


\begin{proof}
The first term in (\ref{14}) can be written as
$$
\phi_\varepsilon^{(1)} (x, \omega) =  \int\limits_{\mathbb R^d} dz \ a (z) \mu \Big( \frac{x}{\varepsilon}, \omega \Big) \mu \Big( \frac{x}{\varepsilon} -z, \omega \Big)  \int_0^{1} \ \Big( \nabla \nabla u_0(x - \varepsilon z t) - \nabla \nabla u_0(x) \Big) z \otimes z (1-t) \ dt.
$$
It doesn't depend on the random corrector $ \theta$ and  can be considered exactly in the same way as in  \cite[Proposition 5 ]{PiZhi17}.
Thus we have
\begin{equation}\label{phi_1bis}
\| \phi_\varepsilon^{(1)} \|_{L^2(\mathbb R^d)} \to 0 \quad \mbox{ as } \; \varepsilon \to 0.
\end{equation}
Let us denote by $\phi_\varepsilon^{(2)}$ the sum of the second and the third terms in (\ref{14}):
\begin{equation}\label{reminder-2}
\begin{array}{rl} \displaystyle
\!\!\!\!&\hbox{ }\!\!\!\!\!\!\!\!\!\!\!\!\phi_\varepsilon^{(2)} (x, \omega) =\\[3mm]
&\!\!\!\!\!\!\!\!\! \displaystyle
\mu \big( \frac{x}{\varepsilon},\omega \big)  \int\limits_{\mathbb R^d} \ a (z)  \mu \big( \frac{x}{\varepsilon} -z, \omega \big)   \theta \big(\frac{x}{\varepsilon}\!-\!z,\omega \big) \Big(  \frac{1}{\varepsilon} \big(\nabla u_0(x- \varepsilon z) - \nabla u_0(x)\big) + z \, \nabla \nabla u_0(x) \Big)\, dz.
\end{array}
\end{equation}
We take sufficiently large  $L>0$ such that supp $\, u_0 \subset \{|x|<\frac12 L \}$ and estimate $\phi_\varepsilon^{(2)} (x, \omega)$ separately in the sets $\{|x|<L\}$ and $\{|x|>L\}$.
If $|x|>L$, then $u_0(x) = 0$.  Since $a(z)$ has a finite second moment in $\mathbb R^d$, for any $c>0$ we have
\begin{equation}\label{ineqz2}
\frac{1}{\varepsilon^2} \int\limits_{|z|> \frac{c}{\varepsilon}}  a (z) \, dz = \frac{1}{\varepsilon^2} \int\limits_{|z|> \frac{c}{\varepsilon}}  a (z) \frac{z^2}{z^2} \, dz \le \frac{1}{c^2} \int\limits_{|z|> \frac{c}{\varepsilon}} a (z) z^2 \, dz \to 0 \quad \mbox{as } \; \varepsilon \to 0.
\end{equation}
Therefore,
\begin{equation}\label{r-2out}
\begin{array}{l}
\displaystyle
\| \phi_\varepsilon^{(2)} \, \chi_{|x|>L} \|^2_{L^2(\mathbb R^d)}
=\!\! \int\limits_{|x|>L}  \Big(\!\! \int\limits_{|x - \varepsilon z|< \frac12 L}\!\! \frac{1}{\varepsilon}
\mu \big( \frac{x}{\varepsilon},\omega \big)  a (z)  \mu \big( \frac{x}{\varepsilon} -z, \omega \big)   \theta \big(\frac{x}{\varepsilon}\!-\!z,\omega \big) \nabla u_0(x- \varepsilon z) \, dz \Big)^2 dx
\\[3mm] \displaystyle
< \alpha_2^4  \, \Big( \frac{1}{\varepsilon^2}
 \int\limits_{|z|> \frac{L}{2\varepsilon}} \ a (z) \, dz \, \Big)^2 \|\varepsilon  \theta \big(\frac{y}{\varepsilon},\omega \big)
 \nabla u_0(y)\|_{L^2(\mathbb R^d)}^2 \to 0;
\end{array}
\end{equation}
Here we have also used the limit relation  $\|  \varepsilon  \theta \big(\frac{y}{\varepsilon},\omega) \nabla u_0(y) \|_{L^2(\mathbb R^d)} \to 0$ that is ensured by Proposition \ref{1corrector}.
Denote $\chi_{<L}(x) = \chi_{\{|x|<L\}}(x)$ and represent the function $\phi_\varepsilon^{(2)} (x,\omega) \, \chi_{<L}(x)$ as  follows:
\begin{equation}\label{r-2in-bis}
\phi_\varepsilon^{(2)} (x, \omega) \, \chi_{<L} (x) = \gamma_\varepsilon^{<} (x, \omega) + \gamma_\varepsilon^{>} (x, \omega),
\end{equation}
where
\begin{equation}\label{r-2in}
\begin{array}{l} \displaystyle
\gamma_\varepsilon^{<} (x, \omega)  =\mu \big( \frac{x}{\varepsilon},\omega \big) \chi_{<L}(x)\\[3mm]
 \displaystyle
\times\int\limits_{|\varepsilon z|< 2L } \ a (z)  \mu \big( \frac{x}{\varepsilon} -z, \omega \big)   \theta \big(\frac{x}{\varepsilon}\!-\!z,\omega \big) \Big(  \frac{1}{\varepsilon} \big(\nabla u_0(x- \varepsilon z) - \nabla u_0(x)\big) + z \, \nabla \nabla u_0(x) \Big)\, dz;
\\[9mm]
\displaystyle
\gamma_\varepsilon^{>} (x, \omega)  = \mu \big( \frac{x}{\varepsilon},\omega \big) \chi_{<L}(x) \\[3mm]
 \displaystyle
\times\int\limits_{|\varepsilon z|> 2L } \ a (z)  \mu \big( \frac{x}{\varepsilon} -z, \omega \big)   \theta \big(\frac{x}{\varepsilon}\!-\!z,\omega \big) \Big(  \frac{1}{\varepsilon} \big(\nabla u_0(x- \varepsilon z) - \nabla u_0(x)\big) + z \, \nabla \nabla u_0(x) \Big)\, dz.
\end{array}
\end{equation}
Since $u_0\in C_0^\infty(\mathbb R^d)$,  the Teylor decomposition applies to $\nabla u_0 (x- \varepsilon z)$, and we get
$$
\frac{1}{\varepsilon} \big(\nabla u_0 (x- \varepsilon z) - \nabla u_0(x)\big) + z \, \nabla \nabla u_0(x) = \frac{\varepsilon}{2} \nabla\nabla\nabla u_0 (\xi)\, z \otimes z
$$
with some $\xi \in \mbox{supp} \, u_0$, here the notation $\nabla\nabla\nabla u_0 (\xi)\, z \otimes z$ is used for the vector function
 $(\nabla\nabla\nabla u_0 (\xi)\, z \otimes z)^i=\partial_{x^j}\partial_{x^k}\partial_{x^i}u_0(\xi)z^jz^k$. Then the right-hand side of the first formula in \eqref{r-2in} admits the  estimate
\begin{equation}\label{r-2in1}
\begin{array}{l} \displaystyle
\mu \big( \frac{x}{\varepsilon},\omega \big) \chi_{<L}(x) \Big|\!\!\!\int\limits_{|\varepsilon z|< 2L } \!\!\!\!\!\! a (z)  \mu \big( \frac{x}{\varepsilon} -z, \omega \big)   \theta \big(\frac{x}{\varepsilon}\!-\!z,\omega \big) \Big(  \frac{1}{\varepsilon} \big(\nabla u_0(x- \varepsilon z) - \nabla u_0(x)\big) + z  \nabla \nabla u_0(x)\! \Big) dz \Big|
\\[3mm]
\displaystyle
 \le \frac{\alpha_2^2}{2} \max | \nabla\nabla\nabla u_0 |  \int\limits_{\mathbb R^d } \, \varepsilon  | \theta \big(\frac{x}{\varepsilon}\!-\!z,\omega \big)| \, \chi_{<3L}(x-\varepsilon z) \, a (z) z^2 \, dz.
\end{array}
\end{equation}
Taking into account the relation
\begin{equation}\label{r-2in1add}
\begin{array}{l} \displaystyle
\int\limits_{\mathbb R^d } \Big( \int\limits_{\mathbb R^d } \, \varepsilon  | \theta \big(\frac{x}{\varepsilon}\!-\!z,\omega \big)| \, \chi_{<3L}(x-\varepsilon z) \, a (z) z^2 \, dz \Big)^2 dx
\\[3mm]
\displaystyle
= \int\limits_{\mathbb R^d } a (z_1) z_1^2 dz_1  \int\limits_{\mathbb R^d } a (z_2) z_2^2 dz_2  \int\limits_{\mathbb R^d }  \varepsilon^2  | \theta \big(\frac{x}{\varepsilon}\!-\!z_1,\omega \big)|  | \theta \big(\frac{x}{\varepsilon}\!-\!z_2,\omega \big)| \chi_{<3L}(x-\varepsilon z_1) \chi_{<3L}(x-\varepsilon z_2) dx
\end{array}
\end{equation}
and applying the Cauchy-Schwartz inequality to the last integral on its right hand side
we conclude  with the help of  Proposition \ref{1corrector} that  $\|\gamma_\varepsilon^{<} (x, \omega) \|_{L^2(\mathbb R^d) } \to 0$ as $\varepsilon \to 0$.


If $|x|<L$ and $|\varepsilon z|>2L$, then $|x-\varepsilon z|>L$, and $u_0 (x-\varepsilon z)=0$. The right-hand side of the second formula in \eqref{r-2in} can be rearranged as follows:
\begin{equation}\label{r-2in2}
\begin{array}{l} \displaystyle
\gamma_\varepsilon^{>} (x, \omega) =
\mu \big( \frac{x}{\varepsilon},\omega \big) \chi_{<L}(x) \int\limits_{|z|> \frac{2L}{\varepsilon} }\!\!\!\!  a (z)  \mu \big( \frac{x}{\varepsilon} -z, \omega \big)   \theta \big(\frac{x}{\varepsilon}\!-\!z,\omega \big) \Big( - \frac{1}{\varepsilon} \nabla u_0(x) + z \, \nabla \nabla u_0(x) \Big)\, dz
\\[3mm] \displaystyle
=\mu \big( \frac{x}{\varepsilon},\omega \big) \chi_{<L}(x) \!\!  \int\limits_{|z|> \frac{2L}{\varepsilon} } \!\!\!\!  a (z)  \mu \big( \frac{x}{\varepsilon} -z, \omega \big) \big(  \theta \big(\frac{x}{\varepsilon}\!-\!z,\omega \big) -  \theta \big(\frac{x}{\varepsilon},\omega \big) \big) \Big(\!\! - \frac{1}{\varepsilon} \nabla u_0(x) + z  \nabla \nabla u_0(x)\!\Big) dz
\\[3mm] \displaystyle
+\mu \big( \frac{x}{\varepsilon},\omega \big) \chi_{<L}(x) \!\! \int\limits_{|z|> \frac{2L}{\varepsilon} } \!\!\!\! a (z)  \mu \big( \frac{x}{\varepsilon} -z, \omega \big)   \theta \big(\frac{x}{\varepsilon},\omega \big) \Big( - \frac{1}{\varepsilon} \nabla u_0(x) + z \, \nabla \nabla u_0(x) \Big)\, dz
\end{array}
\end{equation}
The second term on the right-hand side in \eqref{r-2in2} is estimated in the same way as the function $\phi_\varepsilon^{(2)} \, \chi_{|x|>L}$ in \eqref{r-2out}.
Thus the $L^2(\mathbb R^d)$ norm of this term tends to 0 as $\varepsilon \to 0$.

The first term on the right-hand side of \eqref{r-2in2} admits the following upper bound:
\begin{equation}\label{r-2in2bis}
\begin{array}{l}  \displaystyle
\Big| \mu \big( \frac{x}{\varepsilon},\omega \big) \chi_{<L}(x) \int\limits_{|z|> \frac{2L}{\varepsilon} } \ a (z)  \mu \big( \frac{x}{\varepsilon} -z, \omega \big) \zeta_{-z} \big(T_{\frac{x}{\varepsilon}}\omega \big) \Big( - \frac{1}{\varepsilon} \nabla u_0(x) + z \, \nabla \nabla u_0(x) \Big)\, dz \Big|
\\[3mm] \displaystyle
\leq \alpha_2^2 \int\limits_{|z|> \frac{2L}{\varepsilon} } \ a (z) \Big|  \zeta_{-z} \big(T_{\frac{x}{\varepsilon}}\omega \big)\Big|\ \Big| - \frac{1}{\varepsilon} \nabla u_0(x) + z \, \nabla \nabla u_0(x)\Big|\, dz
\\[3mm] \displaystyle
\leq \alpha_2^2 C(L) \int\limits_{|z|> \frac{2L}{\varepsilon} } |z| a (z) \Big|  \zeta_{-z} \big(T_{\frac{x}{\varepsilon}}\omega \big)\Big| \, dz\ \big(\big|  \nabla u_0(x)\big| + \big| \nabla \nabla u_0(x)\big|\big).
\\[3mm] \displaystyle
\leq \alpha_2^2 C(L) \Big(\int\limits_{|z|> \frac{2L}{\varepsilon} } |z|^2 a (z)dz\Big)^\frac12
 \Big(\int\limits_{\mathbb R^d}  a (z)
  \big|\zeta_{-z} \big(T_{\frac{x}{\varepsilon}}\omega \big)\big|^2 \, dz\Big)^\frac12\ \big(\big|  \nabla u_0(x)\big| + \big| \nabla \nabla u_0(x)\big|\big).
\end{array}
\end{equation}
Since $\zeta_{-z} (\omega)\in L^2_M$, we have
$$
\mathbb E\int\limits_{\mathbb R^d}  a (z)
 | \zeta_{-z} (\omega )|^2 \, dz<\infty.
$$
Taking into account the convergence
$$
\int\limits_{|z|> \frac{2L}{\varepsilon} } |z|^2 a (z)dz\to 0,\quad \hbox{as }\eps\to0,
$$
by the Birkhoff ergodic theorem we obtain that the $L^2(\mathbb R^d)$ norm of the first term on the right-hand side of \eqref{r-2in2}
tends to zero a.s., as $\eps\to0$. Therefore, $\|\gamma_\varepsilon^{>} (x, \omega) \|_{L^2(\mathbb R^d) } \to 0$ as $\varepsilon \to 0$.

From  \eqref{r-2in-bis} it follows that $\| \phi_\varepsilon^{(2)}(x,\omega)  \chi_{<L} (x) \|_{L^2(\mathbb R^d)} \to 0$ as $ \varepsilon \to 0$, and together with \eqref{r-2out} this implies that
\begin{equation}\label{rr}
\| \phi_\varepsilon^{(2)}(x,\omega)  \|_{L^2(\mathbb R^d)} \to 0 \quad \mbox{as } \; \varepsilon \to 0.
\end{equation}
Finally, \eqref{fi} follows from \eqref{phi_1bis} and \eqref{rr}.  Lemma is proved.
\end{proof}


\section{Proof of the main results}\label{s_proofmain}

We  begin this section by proving relation \eqref{convergence1} for $f\in \mathcal{S}_0(\mathbb R^d)$. For such $f$ we have
$u_0\in C_0^\infty(\mathbb R^d)$. It follows from \eqref{v_eps}, Proposition \ref{1corrector} and Lemmas \ref{l_u2small}, \ref{l_u3small}
that
\begin{equation}\label{frstconv}
\|w^\eps-u_0\|_{L^2(\mathbb R^d)}\to 0,\quad\hbox{as }\eps\to 0.
\end{equation}
By the definition of $v^\eps$, $u_2^\eps$ and $u_3^\eps$,
$$
(L^\eps-m)w^\eps=(\hat L-m)u_0-m\eps  \theta \Big(\frac x\eps\Big)\cdot\nabla u_0+\phi_\eps
=f-m\eps  \theta \Big(\frac x\eps\Big)\cdot\nabla u_0+\phi_\eps
$$
$$
=(L^\eps-m)u^\eps-m\eps  \theta \Big(\frac x\eps\Big)\cdot\nabla u_0+\phi_\eps.
$$
Therefore,
$$
(L^\eps-m)(w^\eps-u^\eps)=-m\eps  \theta \Big(\frac x\eps\Big)\cdot\nabla u_0+\phi_\eps.
$$
According to Proposition \ref{1corrector} and Lemma \ref{reminder} the $L^2$ norm of the functions on the right-hand side
of the last formula tends to zero as $\eps\to0$.  Consequently,
$$
\|w^\eps-u^\eps\|_{L^2({\mathbb R^d})}\to 0,\quad\hbox{as }\eps\to 0.
$$
Combining this relation with \eqref{frstconv} yields the desired relation \eqref{convergence1} for $f\in\mathcal{S}_0(\mathbb R^d)$.

To complete the proof of Theorem \ref{T1} we should show that the last convergence  holds for any $f\in L^2(\mathbb R^d)$.

For any $f \in L^2(\mathbb R^d)$ there exists $f_\delta \in \mathcal{S}_0$ such that $\| f - f_\delta\|_{L^2(\mathbb R^d)} <\delta$.
Since the operator $(L^\varepsilon - m)^{-1}$ is bounded uniformly in $\varepsilon$, then
\begin{equation}\label{delta_1}
\| u^{\varepsilon}_\delta - u^\varepsilon \|_{L^2(\mathbb R^d)} \le C_1 \delta,
\qquad
\| u_{0,\delta} - u_0 \|_{L^2(\mathbb R^d)} \le C_1 \delta,
\end{equation}
where
$$
u^{\varepsilon} \ = \ (L^{\varepsilon} - m)^{-1} f, \; \; u_{0} \ = \ (\hat L - m)^{-1} f, \; \;
u^{\varepsilon}_\delta \ = \ (L^{\varepsilon} - m)^{-1} f_\delta, \; \; u_{0,\delta} \ = \ (\hat L - m)^{-1} f_\delta.
$$
Recalling that $f_\delta\in\mathcal{S}_0$, we obtain  $\| u^{\varepsilon}_\delta - u_{0, \delta} \|_{L^2(\mathbb R^d)} \to 0 $. Therefore,  by (\ref{delta_1})
$$
\mathop{ \overline{\rm lim}}\limits_{\varepsilon \to 0} \| u^{\varepsilon} - u_0 \|_{L^2(\mathbb R^d)}  \le 2 C_1 \delta
$$
with an arbitrary  $\delta>0$. This implies the desired convergence in \eqref{t1} for an arbitrary $f\in L^2(\mathbb R^d)$
and completes the proof of the main theorem.

\subsection{Proof of Corollary \ref{cor_main}}

Here we assume that the operator $L^{\eps,{\rm ns}}$ is defined by \eqref{L_eps_ns}.  Multiplying equation \eqref{u_eps_nssss}
by $\rho^\eps(x,\omega)=\rho\big(\frac{x}{\eps},\omega\big)=
\mu\big(\frac{x}{\eps},\omega\big)\big(\lambda\big(\frac{x}{\eps},\omega\big)\big)^{-1}$
we obtain
\begin{equation}\label{eq_modfd}
L^{\eps}u_\eps -m\rho^\eps u_\eps=\rho_\eps f,
\end{equation}
where the symmetrized operator $L^{\eps}$ is given by \eqref{L_eps}. 
Letting $\langle\rho\rangle=\mathbb E\bm{\rho}
=\mathbb E\big(\frac{\bm{\mu}}{\bm{\lambda}}\big)$ we consider an auxiliary equation
\begin{equation}\label{eq_ns_aux}
L^{\eps}g_\eps -m\langle\rho\rangle g_\eps=\langle\rho\rangle f.
\end{equation}
By Theorem \ref{T1} the functions $g_\eps$ converge a.s. in $L^2(\mathbb R^d)$, as $\eps\to0$, to a solution of the equation $\hat Lg -m\langle\rho\rangle g=\langle\rho\rangle f$. Our goal is to show that $\|g_\eps-u_\eps\|_{L^2(\mathbb R^d)}\to0$
as $\eps\to0$. To this end we subtract equation \eqref{eq_modfd} from \eqref{eq_ns_aux}.
After simple rearrangements this yields
\begin{equation}\label{eq_ns_alpha}
L^{\eps}\alpha_\eps -m\rho_\eps \alpha_\eps=\big(\langle\rho\rangle-\rho_\eps\big)g_\eps +\big(\langle\rho\rangle-\rho_\eps\big) f.
\end{equation}
with  $\alpha_\eps(x)=g_\eps(x)-u_\eps(x)$. In a standard way one can derive the following estimate
\begin{equation}\label{C_ns_pure}
 m  \int\limits_{\mathbb R^d} (\alpha_\varepsilon(x))^2 dx+\frac{1}{\varepsilon^2} \int\limits_{\mathbb R^d} \int\limits_{\mathbb R^d} \,  a (z)   ( \alpha_\varepsilon (x-\varepsilon z) -  \alpha_\varepsilon(x))^2 dz dx < C.
\end{equation}
As was shown in the proof of Lemma \ref{Convergence}, this estimate implies compactness of the family $\{\alpha_\eps\}$
in $L^2(B)$ for any cube $B$.  Multiplying \eqref{eq_ns_alpha} by $\alpha_\eps$ and integrating the resulting relation
over $\mathbb R^d$ we obtain
\begin{equation}\label{al_al}
\|\alpha_\eps\|^2_{L^2(\mathbb R^d)}\leq C_1
\big|\big((\langle\rho\rangle-\rho_\eps)g_\eps, \alpha_\eps\big)_{L^2(\mathbb R^d)}\big| +\big|\big((\langle\rho\rangle-\rho_\eps) f,\alpha_\eps\big)_{L^2(\mathbb R^d)}\big|
\end{equation}
By the Birkhoff ergodic theorem $(\langle\rho\rangle-\rho_\eps)$ converges to zero weakly in $L^2_{\rm loc}(\mathbb R^d)$.
Considering the boundedness of $(\langle\rho\rangle-\rho_\eps)$ and the properties of $\alpha_\eps$ and $g_\eps$, we conclude that the both terms on the right-hand side in  \eqref{al_al} tend to zero, as $\eps\to0$. So does
$\|\alpha_\eps\|^2_{L^2(\mathbb R^d)}$.  Therefore, $u_\eps$ converges to the solution of equation
$\hat Lu -m\langle\rho\rangle u=\langle\rho\rangle f$. Dividing this equation by $\langle\rho\rangle$, we rewrite
the limit equation as follows
$$
\Big(\mathbb E\big\{\frac{\bm\mu}{\bm\lambda}\big\}\Big)^{-1}Q_{ij}\frac{\partial^2 u}{\partial x_i\partial x_j}-mu=f
$$
with $\Theta$ defined in \eqref{Positive}.  This completes the proof of Corollary.

\noindent
{\large \bf Acknowlegements}\\[2mm]
The work on this project was completed during the visit of Elena Zhizhina at the Arctic University of Norway, campus Narvik. She expresses her gratitude to the colleagues at this university for hospitality.

\end{document}